\definecolor{blueish}{HTML}{1A85FF}
\definecolor{redish}{HTML}{D41159}
\newtheorem{theorem}{Theorem}[section]
\newtheorem{lemma}[theorem]{Lemma}
\newtheorem{prop}[theorem]{Proposition}
\theoremstyle{definition}
\newtheorem{definition}[theorem]{Definition}
\theoremstyle{remark}
\newtheorem{remark}[theorem]{Remark}
\definecolor{shadecolor}{named}{GreenYellow}
\newcommand{\pushright}[1]{\ifmeasuring@#1\else\omit\hfill$\displaystyle#1$\fi\ignorespaces}
\newcommand{\pushleft}[1]{\ifmeasuring@#1\else\omit$\displaystyle#1$\hfill\fi\ignorespaces}
\newcommand{\p}{\mathbb P}
\newcommand{\pla}{\mathbb P_\lambda}
\newcommand{\plaq}{\mathbb{P}_{\lambda,q}}
\newcommand{\ela}{\mathbb E_{\lambda}}
\newcommand{\elaq}{\mathbb E_{\lambda,q}}
\newcommand{\Scal}{\mathcal{S}}
\newcommand{\Ycal}{\mathcal{Y}}
\newcommand{\Mcal}{\mathcal{M}}
\newcommand{\Ucal}{\mathcal{U}}
\newcommand{\Wcal}{\mathcal{W}}
\newcommand{\E}{\mathbb E}
\newcommand{\R}{\mathbb R}
\newcommand{\Rd}{\mathbb R^d}
\newcommand{\N}{\mathbb N}
\newcommand{\B}{\mathcal B}
\newcommand{\HypTwo}{{\mathbb{H}^2}}
\newcommand{\HypThree}{{\mathbb{H}^3}}
\newcommand{\HypDim}{{\mathbb{H}^d}}
\newcommand{\dd}{\mathrm{d}} 
\newcommand{\C}{\mathscr {C}}
\newcommand{\piv}[1]{\mathsf {Piv}(#1)}
\DeclareMathOperator*{\esssup}{ess\,sup}
\DeclareMathOperator*{\essinf}{ess\,inf}
\newcommand{\LandauBigO}[1]{\mathcal{O}\left(#1\right)}
\newcommand{\convex}[1]{{\mathrm{conv}\left(#1\right)}}
\newcommand{\Gcal}{\mathcal{G}}
\newcommand{\dequal}{\,{\buildrel d \over =}\,}
\DeclareMathOperator{\arcosh}{arcosh}
\newcommand{\conn}[3]{#1 \longleftrightarrow #2\textrm { in } #3}
\newcommand{\adja}[3]{#1 \sim #2\textrm { in } #3}
\newcommand{\nconn}[3]{#1 \centernot\longleftrightarrow #2\textrm { in } #3}
\newcommand{\dconn}[3]{#1 \Longleftrightarrow #2\textrm { in } #3}
\newcommand{\tlam}{\tau_\lambda}
\newcommand{\Id}{\mathds 1}
\DeclarePairedDelimiter\abs{\lvert}{\rvert}
\DeclarePairedDelimiter\ceil{\lceil}{\rceil}
\DeclarePairedDelimiterX{\inner}[2]{\langle}{\rangle}{#1, #2}
\newcommand{\habs}[1]{\abs*{#1}_\HypTwo}
\newcommand{\habst}[1]{\abs*{#1}_\HypThree}
\newcommand{\habsd}[1]{\abs*{#1}_\HypDim}
\newcommand{\dist}[1]{\mathrm{dist}\left(#1\right)}
\newcommand{\orig}{o}
\newcommand{\e}{\mathrm{e}}
\newcommand{\connf}{\varphi}
\definecolor{darkorange}{RGB}{255,165,0}
\definecolor{altviolet}{RGB}{139,0,139}
\definecolor{turquoise}{RGB}{64,224,208}
\definecolor{lblue}{RGB}{173,216,230}
\definecolor{violet}{RGB}{238,130,238}
\definecolor{darkgreen}{RGB}{0,100,0}
\definecolor{lgreen}{RGB}{144,238,144}
\tikzset{cross/.style={cross out, draw=black, minimum size=2*(#1-\pgflinewidth), inner sep=0pt, outer sep=0pt},
cross/.default={1pt}}
\newcommand{\GregFourEg}{\raisebox{-21.5pt}{
    \begin{tikzpicture}[scale=1]
        \draw (0,0) -- (0.6,0) -- (0.6,0.6);
        \draw (0.6,0) -- (0.6,-0.6);
        \filldraw[fill=blueish!40] (0.6,0.6) circle (3pt) node[left]{$1$};
        \filldraw[fill=blueish!40] (0,0) circle (3pt)node[left]{$2$};
        \filldraw[fill=blueish!40] (0.6,-0.6) circle (3pt) node[left]{$3$};
        \filldraw[fill=redish] (0.6,0) circle (3pt);
    \end{tikzpicture}
}}
\newcommand{\GregFourEgStepOneVOne}{\raisebox{-21.5pt}{
    \begin{tikzpicture}[scale=1]
        \draw (0,0) -- (0.6,0) -- (0.6,0.6);
        \draw (0.6,0) -- (0.6,-0.6) -- (1.2,-0.6);
        \filldraw[fill=blueish!40] (0.6,0.6) circle (3pt) node[left]{$1$};
        \filldraw[fill=blueish!40] (0,0) circle (3pt)node[left]{$2$};
        \filldraw[fill=blueish!40] (0.6,-0.6) circle (3pt) node[left]{$3$};
        \filldraw[fill=blueish!40] (1.2,-0.6) circle (3pt)node[right]{$4$};
        \filldraw[fill=redish] (0.6,0) circle (3pt);
    \end{tikzpicture}
}}
\newcommand{\GregFourEgStepOneVTwo}{\raisebox{-21.5pt}{
    \begin{tikzpicture}[scale=1]
        \draw (0,0) -- (0.6,0) -- (0.6,0.6);
        \draw (1.2,0) -- (0.6,0) -- (0.6,-0.6);
        \filldraw[fill=blueish!40] (0.6,0.6) circle (3pt) node[left]{$1$};
        \filldraw[fill=blueish!40] (0,0) circle (3pt)node[left]{$2$};
        \filldraw[fill=blueish!40] (0.6,-0.6) circle (3pt) node[left]{$3$};
        \filldraw[fill=blueish!40] (1.2,0) circle (3pt)node[right]{$4$};
        \filldraw[fill=redish] (0.6,0) circle (3pt);
    \end{tikzpicture}
}}
\newcommand{\GregFourEgStepTwo}{\raisebox{-21.5pt}{
    \begin{tikzpicture}[scale=1]
        \draw (0,0) -- (0.6,0) -- (0.6,0.6);
        \draw (0.6,0) -- (1.2,0) -- (1.2,-0.6);
        \filldraw[fill=blueish!40] (0.6,0.6) circle (3pt) node[left]{$1$};
        \filldraw[fill=blueish!40] (0,0) circle (3pt)node[left]{$2$};
        \filldraw[fill=blueish!40] (1.2,-0.6) circle (3pt) node[left]{$3$};
        \filldraw[fill=blueish!40] (1.2,0) circle (3pt)node[right]{$4$};
        \filldraw[fill=redish] (0.6,0) circle (3pt);
    \end{tikzpicture}
}}
\newcommand{\GregFourEgStepThree}{\raisebox{-21.5pt}{
    \begin{tikzpicture}[scale=1]
        \draw (0,0) -- (0.6,0) -- (0.6,0.6);
        \draw (0.6,0) -- (1.2,0) -- (1.2,-0.6);
        \draw (1.2,0) -- (1.2,0.6);
        \filldraw[fill=blueish!40] (0.6,0.6) circle (3pt) node[left]{$1$};
        \filldraw[fill=blueish!40] (0,0) circle (3pt)node[left]{$2$};
        \filldraw[fill=blueish!40] (1.2,-0.6) circle (3pt) node[left]{$3$};
        \filldraw[fill=redish] (1.2,0) circle (3pt);
        \filldraw[fill=redish] (0.6,0) circle (3pt);
        \filldraw[fill=blueish!40] (1.2,0.6) circle (3pt) node[right]{$4$};
    \end{tikzpicture}
}}
\newcommand{\GregFourEgStepFour}{\raisebox{-21.5pt}{
    \begin{tikzpicture}[scale=1]
        \draw (0,0) -- (0.6,0) -- (0.6,0.6);
        \draw (0.6,0) -- (0.6,-0.6);
        \filldraw[fill=blueish!40] (0.6,0.6) circle (3pt) node[left]{$1$};
        \filldraw[fill=blueish!40] (0,0) circle (3pt)node[left]{$2$};
        \filldraw[fill=blueish!40] (0.6,-0.6) circle (3pt) node[left]{$3$};
        \filldraw[fill=blueish!40] (0.6,0) circle (3pt) node[right]{$4$};
    \end{tikzpicture}
}}
\numberwithin{equation}{section}
\title{Mean-field behaviour of the random connection model on hyperbolic space}
\author{Matthew Dickson\footnote{University of British Columbia, Department of Mathematics, Vancouver, BC, Canada, V6T 1Z2; Email: dickson@math.ubc.ca; \orcidlink{0000-0002-8629-4796}~https://orcid.org/0000-0002-8629-4796} \and Markus Heydenreich\footnote{Universität Augsburg, Institut für Mathematik, Universitätsstr.\ 2, 86135 Augsburg, Germany; Email: markus.heydenreich@uni-a.de; \orcidlink{0000-0002-3749-7431}~https://orcid.org/0000-0002-3749-7431}
}
\date{}
\begin{document}
\maketitle

\vspace{-1em}

{\centering{ \today}\par}

\vskip-3em

\begin{abstract}
We study the random connection model on hyperbolic space $\HypDim$ in dimension $d=2,3$. Vertices of the spatial random graph are given as a Poisson point process with intensity $\lambda>0$. Upon variation of $\lambda$ there is a percolation phase transition: there exists a critical value $\lambda_c>0$ such that for $\lambda<\lambda_c$ all clusters are finite, but infinite clusters exist for $\lambda>\lambda_c$. We identify certain critical exponents that characterise the clusters at (and near) $\lambda_c$, and show that they agree with the mean-field values for percolation. 
We derive the exponents through isoperimetric properties of critical percolation clusters rather than via a calculation of the triangle diagram.
\end{abstract}

\noindent\emph{Mathematics Subject Classification (2020).} 
60K35, 82B43, 60G55.

\smallskip

\noindent\emph{Keywords and phrases.} Continuum percolation, hyperbolic space, critical exponents. 

{\footnotesize
}

\section{Introduction}

\paragraph{Motivation.}
We study the random connection model on a Borel space $\mathcal S$. 
This is a spatial random graph model, where vertices are given by a locally finite point process $\eta$ on $\Scal$. Edges are sampled randomly for any pair of points $u,v\in\eta$ independently with probability $\connf(u,v)$, where $\connf\colon \Scal\times\Scal\to[0,1]$ is some connectivity function. 
The overarching question that we address is: \emph{how is the geometry of $\Scal$ linked to the connectivity properties of the random connection model?}

We zoom in on the case where $\Scal$ is hyperbolic and $\eta$ is the Poisson point process with respect to the hyperbolic tensor measure. We found it useful to think in terms of the Poincar\'e disc model for $\Scal=\HypTwo$ or Poincar\'e ball model for $\Scal=\HypThree$, but this choice of model is not relevant for the results.

In our main result, we derive critical exponents that describe behaviour of the random connection model \emph{at} the phase transition point. 
We further derive criteria under which such phase transition exists. 
As we shall work out, the value of these critical exponents is intimately related to isoperimetric properties of random sets in the hyperbolic space. 
Interestingly, the same critical exponents show up in high-dimensional Euclidean space, but proof techniques clearly differ.

\paragraph{The model.}
We adopt the Poincar\'e disc model for $\HypTwo$ resp.\ the Poincar\'e ball model for $\HypThree$, both with constant curvature -1, but that specific value is of no importance for our result. 
We write $\orig$ for the origin, and let $\dist{x,y}$ denote \emph{hyperbolic} distance between points $x$ and $y$. The hyperbolic area (resp.\ volume) measure $\dd x$ on the Poincar\'e disc model has metric tensor $\frac{4\|dy\|^2}{(1-\|y\|^2)^2}$, where $y$ is from the unit ball in \emph{Euclidean coordinates}. 
Given a hyperbolic-measurable set $E\subset \HypDim$, let $\habsd{E}=\int_\HypDim\mathbf1_E \,\dd x$ denote the hyperbolic measure of that set. 
A few relations for hyperbolic triangles are listed in Appendix~\ref{app:hyperbolictriangles}. 

Every hyperplane (for $\HypTwo$ these are simply bi-infinite geodesics) separates the space in two half-spaces. Mind that, in contrast to Euclidean spaces, on $\HypDim$ we can have more than two disjoint half-spaces; see Figure~\ref{fig:SteppingStonesmagnetisation} for an example. 

Given $u,v\in\HypDim$, there exists a unique geodesic $\gamma$ that passes through $u$ and $v$. There is then a unique isometry on $\HypDim$, which we call a \emph{translation isometry}, that preserves orientation, maps $\gamma\mapsto\gamma$, and maps $u\mapsto v$. We denote this translation isometry map by $t_{u,v}\colon\HypDim\to\HypDim$. We assume that $\connf\colon \HypDim\times\HypDim\to \left[0,1\right]$ satisfies
\begin{align}
    \connf(x,y) &= \connf(y,x),\\
    \label{eq:translInv}
    \connf(x,y) &= \connf\left(t_{u,v}(x),t_{u,v}(y)\right),
\end{align}
for all $x,y,u,v\in\HypDim$.
However, every even isometry on $\HypDim$ extends to a unique Möbius transformation, and hence invariance of $\connf$ under translation is equivalent to invariance under rotation  or horolation (i.e. ``rotation centred at a boundary point''), cf.~\cite[Exercise~29.13]{martin2012foundations}. In particular, this means that all the $\connf$ we consider are of the form
\begin{equation*}
    \connf(x,y) = \connf\left(\dist{x,y}\right).
\end{equation*}

\medskip
The random connection model is a continuum percolation model. We let $\eta$ be a homogeneous Poisson process on $\HypDim$ with intensity measure $
\lambda \,\dd x$, where $\lambda>0$ is a model parameter. We identify $\eta$ with its support. 
For $x,y\in\HypDim$, we also consider the Palm versions $\eta^x=\eta\cup\{x\}$ and $\eta^{x,y}=\eta\cup\{x,y\}$. 

We consider a spatial random graph whose vertices is given by $\eta$ (resp.\ $\eta^x$ or $\eta^{x,y}$) and edges are inserted independently for each pair of vertices $u$ and $v$ with probability $\connf(u,v)$. 
Adopting notation from \cite{HeyHofLasMat19}, we denote by $\xi$ (resp.\ $\xi^x$ or $\xi^{x,y}$) the entire model that includes the underlying point process as well as the additional randomness required to form edges. 
See Figure~\ref{fig:BooleanSimulation}. 
For Poisson points $u,v\in\eta$ we write $\adja uv{\xi}$ whenever there is an edge between $u$ and $v$, and further write $u\longleftrightarrow v$ whenever $u=v$ or $u$ and $v$ are (graph-)connected through a finite path of consecutive edges in $\xi$. 
Our interest lies in the geometric properties of the connected components. These can best be studied under the Palm measure: 
\[\C\left(\orig,\xi^\orig\right):=\{x\in\eta^\orig:\conn \orig x \xi^\orig\}.\]
This is also called the \emph{cluster} of $\orig$. 
Given a countable set $A$, let $\#A$ denote the number of elements of $A$. The \emph{susceptibility}, or expected size of the cluster of $\orig$ is 
\begin{equation}
    \chi\left(\lambda\right) := \ela\left[\#\C\left(\orig,\xi^{\orig}\right)\right].
\end{equation}
A second quantity is the percolation probability, which is the probability that a designated point, $\orig$ say, lies within an infinitely large cluster: 
\begin{equation}
    \theta\left(\lambda\right) := \pla\left(\#\C\left(\orig,\xi^\orig\right)=\infty\right).
\end{equation}

\begin{figure}
    \centering
    \begin{subfigure}[b]{0.49\textwidth}
        \centering
        \includegraphics[width=\linewidth]{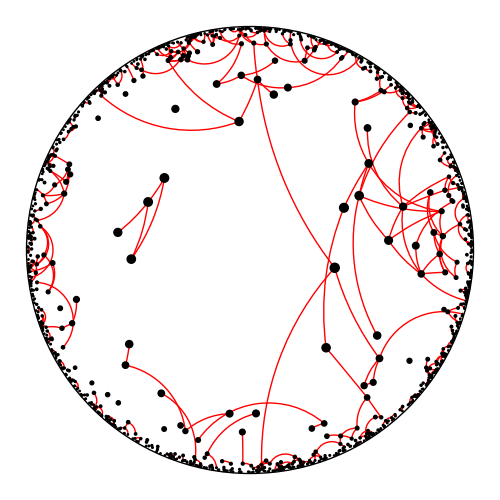}
    \end{subfigure}
    \centering
    \begin{subfigure}[b]{0.49\textwidth}
        \centering
        \includegraphics[width=\linewidth]{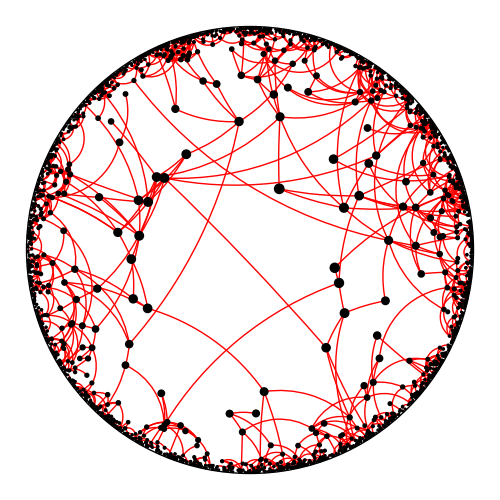}
    \end{subfigure}
    \caption{Simulations of the random connection model on the Poincar{\'e} disc model of $\HypTwo$ with $\connf\left(x,y\right) = \exp\left(-2\dist{x,y}\right)$ and different intensities.}
    \label{fig:BooleanSimulation}
\end{figure}

While we generally make very few assumptions on $\connf$, we do assume throughout that 
\begin{equation}
\label{eqn:longdistanceassumption}
    \lim_{R\to\infty}\esssup_{x\not\in B_R\left(\orig\right)}\connf(x,\orig)<1.
\end{equation}
This avoids a number of technical issues in the proof. 
A prominent example for the choice of $\connf$ is the indicator function $\connf(u,v)=\mathbf1_{\dist{u,v}\le R}$ for certain fixed $R>0$, this gives the \emph{Poisson Boolean model}. It might be depicted by considering a (hyperbolic) ball of radius $R/2$ around each vertex and connecting vertices by an edge whenever their respective balls overlap. 

One consequence of the invariance assumption \eqref{eq:translInv} is that $\int_\HypDim\connf(\orig,x)\dd x >0$ is sufficient to guarantee a certain irreducibility: for all $x,y\in\HypDim$ and $\lambda>0$ we have that the two-point function 
\begin{equation}\label{eq:TwoPt}
    \tlam(x,y):=\pla\big(\conn xy{\xi^{x,y}}\big)
\end{equation} 
satisfies $\tlam(x,y)>0$. See Proposition~\ref{lem:NonTrivialPhaseTrans} for related statements.

\subsection{Results}
We first address the question of a ``non-trivial phase transition'': under which assumptions do we know that there exists a critical density threshold such that there is no percolation below, and there is percolation above such threshold? To this end, we introduce two critical values: 
\begin{align}
    \lambda_T &:= \inf\left\{\lambda>0\colon \chi\left(\lambda\right)=\infty\right\},\\
    \lambda_c &:= \inf\left\{\lambda>0\colon \theta\left(\lambda\right)>0\right\}.
\end{align}
It is clear that $\lambda_T\le \lambda_c$ always, because $\theta\left(\lambda\right)>0$ implies $\chi\left(\lambda\right)=\infty$. 
Our first result is a non-triviality criterion for these critical values: 
\begin{restatable}{prop}{NonTrivialThresholds}
\label{lem:NonTrivialPhaseTrans}
    Let $d\geq 2$ and consider a RCM on $\HypDim$. Then $\lambda_T,\lambda_c<\infty$ if and only if $\int_\HypDim\connf(\orig,x)\dd x >0$, and $\lambda_T,\lambda_c>0$ if and only if $\int_\HypDim\connf(\orig,x)\dd x <\infty$.
\end{restatable}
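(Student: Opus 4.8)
The plan is as follows. Write $I:=\int_\HypDim\connf(\orig,x)\,\dd x$; by translation invariance $\int_\HypDim\connf(y,x)\,\dd x=I$ for every $y$. Since $\lambda_T\le\lambda_c$ always holds, the proposition follows once we establish: \textbf{(a)} $I=0\Rightarrow\lambda_T=\lambda_c=\infty$; \textbf{(b)} $I=\infty\Rightarrow\lambda_T=\lambda_c=0$; \textbf{(c)} $I<\infty\Rightarrow\lambda_T>0$; \textbf{(d)} $I>0\Rightarrow\lambda_c<\infty$ — indeed (a) and (b) give the two ``only if'' directions, while (c) and (d) give the two ``if'' directions, using $\lambda_T\le\lambda_c$ to promote $\lambda_T>0$ to $\lambda_c>0$ and $\lambda_c<\infty$ to $\lambda_T<\infty$.

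\emph{The easy cases.} For (a): $I=0$ means $\connf(x,\cdot)=0$ Lebesgue-a.e.\ for every $x$, so a first-moment (Mecke) computation shows $\xi^\orig$ has no edges a.s.; hence $\chi\equiv1$, $\theta\equiv0$, and $\lambda_T=\lambda_c=\infty$. For (b): the $\eta$-neighbours of $\orig$ form a Poisson thinning of $\eta$ of intensity $\lambda\connf(\orig,\cdot)$, whose total mass is $\mathrm{Poisson}(\lambda I)=\infty$ a.s.; so $\#\C(\orig,\xi^\orig)=\infty$ a.s.\ for every $\lambda>0$, giving $\theta\equiv1$, $\chi\equiv\infty$, and $\lambda_T=\lambda_c=0$. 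For (c): by Mecke's formula $\chi(\lambda)=1+\lambda\int_\HypDim\pla(\conn{\orig}{x}{\xi^{\orig,x}})\,\dd x$, and bounding the two-point function by the expected number of open paths and iterating Mecke yields the tree-graph bound $\chi(\lambda)\le\sum_{n\ge0}(\lambda I)^n$, each $n$-fold integral factoring into $n$ copies of $I$ by translation invariance; hence $\chi(\lambda)<\infty$ for $\lambda<1/I$ and $\lambda_T\ge1/I>0$.

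\emph{The renormalisation for (d).} By the monotone coupling of the RCM in $\connf$ it suffices to percolate at large $\lambda$ for a smaller connection function: since $I>0$ there are $\epsilon_0,r_2>0$ with $S_0:=\{\rho\in(0,r_2]:\connf(\rho)\ge\epsilon_0\}$ of positive Lebesgue measure, and we replace $\connf$ by $\connf_0(x,y):=\epsilon_0\mathbf 1_{S_0}(\dist{x,y})\le\connf(x,y)$. Fix a density point $\rho^*$ of $S_0$ with $0<\rho^*<\esssup S_0$ and a small $\delta>0$. The key local estimate is that for $a,b$ with $\dist{a,b}\in[\rho^*-2\delta,\rho^*+2\delta]$ the ``bridge set'' $R(a,b):=\{w:\dist{a,w}\in S_0,\ \dist{w,b}\in S_0\}$ has hyperbolic measure at least some $c>0$, uniformly in $a,b$. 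This holds because for $d\ge2$ the map $w\mapsto(\dist{a,w},\dist{w,b})$ is, off the geodesic through $a$ and $b$, a submersion, hence pushes the hyperbolic measure forward to an absolutely continuous measure on $\R^2$ with density positive on $\{|s_1-s_2|<\dist{a,b}<s_1+s_2\}$; this open set meets $S_0\times S_0$ in positive measure near $(\rho^*,\rho^*)$ since $\rho^*$ is a density point, and the resulting lower bound depends continuously, hence uniformly, on $\dist{a,b}$ over the compact interval. As the $\eta$-points of $R(a,b)$ form a Poisson set and each of them joins $a$ to $b$ (in $\xi$) independently with probability $\epsilon_0^2$, we get $\pla(a,b\text{ joined through one vertex of }\eta)\ge1-e^{-\epsilon_0^2\lambda c}\to1$ as $\lambda\to\infty$; concatenating $n$ such one-vertex bridges along a geodesic, the endpoints of a length-$n\rho^*$ segment are joined inside a bounded window with probability $\to1$, for each fixed $n$.

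\emph{Completing (d).} Pick $n$ with $\ell:=n\rho^*>\tfrac{\log 2}{d-1}$ and, exploiting the exponential volume growth of $\HypDim$, embed a rooted binary tree $\mathbb T$ with root at $\orig$ and all edges realised by geodesic segments of length $\ell$, arranged so that the bounded windows $W_e$ attached to the edges (each a fixed-shape neighbourhood of such a segment, enlarged to contain $\delta$-balls around the endpoints and the $n-1$ equally spaced interior points) are disjoint for edges sharing no vertex — there is room because the $2^k$ windows at depth $k$ have bounded volume each while $B_{(k+1)\ell}(\orig)$ has volume $\gg2^k$. Call $e$ \emph{good} if every $\delta$-ball in $W_e$ contains an $\eta$-point and the $\eta$-points closest to consecutive frame points are successively joined by one-vertex bridges inside $W_e$ (the distinguished point at the root being $\orig$ itself); by the previous paragraph $\pla(e\text{ good})=:p(\lambda)\to1$. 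Edges sharing no vertex have independent states and $\mathbb T$ has bounded degree, so by the Liggett--Schonmann--Stacey domination theorem the good-edge process dominates Bernoulli$(p(\lambda))$ edge percolation on $\mathbb T$; for $\lambda$ large this exceeds $p_c(\mathbb T)=\tfrac12$, whence with positive probability the good cluster of the root is infinite, and on that event the $\eta$-points attached to the vertices of an infinite good ray are pairwise distinct (disjoint $\delta$-balls) and all connected to $\orig$ in $\xi^\orig$. Thus $\theta(\lambda)>0$ for large $\lambda$, i.e.\ $\lambda_c<\infty$ (and hence $\lambda_T\le\lambda_c<\infty$).

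\emph{Main obstacle.} Everything except (d) is routine. The difficulty in (d) is twofold: hyperbolic space contains no quasi-isometric copy of $\Z^2$, so the renormalisation must target a tree rather than a Euclidean lattice (this turns out to be convenient, as the exponential growth leaves ample room to keep blocks separated); and $\connf$ is only non-negligible ``on average'' — it need not be bounded below on any fixed distance window — so all renormalised connections must be routed through auxiliary Poisson vertices, the Lebesgue-density argument above being exactly what guarantees that the bridge sets $R(a,b)$ have volume bounded away from $0$.
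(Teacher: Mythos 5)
Your decomposition into the four implications matches the paper's, and parts (a)--(c) are essentially the paper's arguments: the paper handles $I=\infty$ by a truncation $\connf=\sum_i\connf_i$ rather than by directly citing the infinite-mass Poisson process of neighbours, and handles $I<\infty$ by exactly your generations/branching bound $\chi(\lambda)\le\sum_k(\lambda I)^k$. The interesting comparison is in (d). Both you and the paper renormalise onto a binary tree embedded in $\HypDim$ and compare with Bernoulli percolation at parameter above $\tfrac12$, but you differ in how the single-step connection probability is bounded below: the paper asserts directly from $I>0$ the existence of points $a,b$ and $\delta,\varepsilon>0$ with $\essinf_{x\in B_\delta(\orig),y\in B_\delta(a)\cup B_\delta(b)}\connf(x,y)>\varepsilon$ and then connects consecutive stepping-stone balls by single edges, whereas you route each renormalised connection through an auxiliary Poisson vertex $w$ in the bridge set $R(a,b)$, with the Lebesgue-density/submersion argument guaranteeing $\habsd{R(a,b)}\ge c>0$ uniformly. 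Your two-step bridge is the more robust route: an essential infimum over a product of balls requires $\connf$ to exceed $\varepsilon$ a.e.\ on an entire interval of distances, which does not follow from $\int\connf>0$ alone (take $\connf=\varepsilon\mathbf 1_S$ with $S$ a nowhere-dense set of positive measure), while your argument only needs a density point of $\{\connf\ge\varepsilon_0\}$. The paper also makes the dependence structure exactly independent across generations, while you invoke Liggett--Schonmann--Stacey to absorb the dependence between tree edges sharing a vertex; both are fine.

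The one step you should not wave at is the disjoint embedding of the tree. Counting volume ($2^k$ windows of bounded volume versus $\habsd{B_{(k+1)\ell}(\orig)}\gg 2^k$ for $\ell>\tfrac{\log 2}{d-1}$) is only a necessary condition and does not produce an embedding with the windows $W_e$ pairwise disjoint for non-adjacent edges; indeed, for the standard construction in which three geodesic edges meet at each vertex at angle $2\pi/3$, the edge length must exceed $L(2\pi/3)=\arcosh(5/3)\approx1.09$, which is strictly larger than $\log 2$. This is precisely the content of the paper's Lemma~\ref{lem:splittingLength}: the explicit threshold $L(\theta)$ from the second hyperbolic cosine rule guarantees that the two cones branching off $A$ and $B$ are disjoint and that all tree vertices are uniformly separated. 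Your construction goes through verbatim once you replace ``pick $n$ with $n\rho^*>\tfrac{\log2}{d-1}$'' by ``pick $n$ large enough that $n\rho^*$ exceeds the separation length of such an embedded tree'' and justify the existence of that embedding (e.g.\ by the cone argument of Lemma~\ref{lem:splittingLength}); as stated, the threshold you give is insufficient and the embedding is asserted rather than constructed.
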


The behaviour at (and near) the critical intensity $\lambda_c$ (resp.\ $\lambda_T$) is best described through the notion of critical exponents. This goes back a long way, see e.g.\ \cite{AizNew84,BarAiz91,Ngu87}, and \cite{hughes1996random} for physics references. In high-dimensional Euclidean space, the values of these critical exponents agree with the corresponding exponents for branching random walk resp.\ branching jump processes, cf.\ \cite[Section 1.2]{HeyHof17}. Geodesic lines in hyperbolic space separate quickly, and we thus expect the same mean-field values for the critical exponents. The main critical exponents relevant for us are the following: 
\begin{align}
    \gamma&=-\lim_{\lambda\nearrow\lambda_T}\frac{\log \chi(\lambda)}{\log(\lambda_T-\lambda)},\\
    \beta&=\quad\lim_{\lambda\searrow\lambda_c}\frac{\log \theta(\lambda)}{\log(\lambda-\lambda_c)},\\
    \delta&=-\lim_{n\to\infty}\frac{\log n}{\log\p_{\lambda_c}\left(\#\C\left(\orig,\xi^\orig\right)\geq n\right)},\\
    \Delta&=-\lim_{\lambda\nearrow\lambda_T}\frac{\log{\ela\left[\#\C\left(\orig,\xi^\orig\right)^{n+1}\right]}-\log{\ela\left[\#\C\left(\orig,\xi^\orig\right)^{n}\right]}}{\log(\lambda_T-\lambda)}.\label{eqn:defDelta}
\end{align}
The existence of these limits is not a priori clear, and part of the hypothesis (though one might use limit superior instead for definiteness). Also the fact that $\Delta$ is independent of $n$ in \eqref{eqn:defDelta} is an implicit claim. 
Our main result is that, for the RCM on $\HypTwo$ and $\HypThree$, these critical exponents exist (in a fairly strong sense) and take on their mean-field values: 

\begin{theorem}[Critical exponents]
\label{thm:CritExponents}
    Let $d=2,3$ and assume \eqref{eqn:longdistanceassumption} as well as $\int_\HypDim\connf(\orig,x)\dd x \in(0,\infty)$. For the RCM on $\HypDim$, there exist $0<C\leq C'<\infty$ and $\varepsilon>0$ such that: 
    \begin{enumerate}[(a)]
    \item \label{thm:CritExponents - Susceptibility}
    \begin{equation}
        C\left(\lambda_T-\lambda\right)^{-1} \leq \chi(\lambda) \leq C'\left(\lambda_T-\lambda\right)^{-1}
    \end{equation}
    for all $\lambda<\lambda_T$. That is, the critical exponent $\gamma=1$.

    \item \label{thm:CritExponents - Percolation}
    \begin{equation}
        C\left(\lambda-\lambda_c\right)_+ \leq \theta(\lambda) \leq C'\left(\lambda-\lambda_c\right)_+
    \end{equation}
    for all $\lambda<\lambda_c+\varepsilon$. That is, the critical exponent $\beta=1$. Furthermore, $\lambda_c=\lambda_T$.
    \item \label{thm:CritExponents - tailexponent}
    For all $n\in\N$,
    \begin{equation}
        Cn^{-\frac{1}{2}}\leq \p_{\lambda_c}\left(\#\C\left(\orig,\xi^\orig\right)\geq n\right) \leq C'n^{-\frac{1}{2}}.
    \end{equation}
    That is, the critical exponent $\delta=2$.
    \item \label{thm:CritExponents - SusceptibilityMoment}
    For all $n\geq 1$, there exist $0<C_n\leq C'_n<\infty$ such that
    \begin{equation}
    \label{eqn:thmpartd}
        C_n\left(\lambda_c-\lambda\right)^{-2} \leq \frac{\ela\left[\#\C\left(\orig,\xi^\orig\right)^{n+1}\right]}{\ela\left[\#\C\left(\orig,\xi^\orig\right)^{n}\right]} \leq C'_n\left(\lambda_c-\lambda\right)^{-2}
    \end{equation}
    for all $\lambda<\lambda_c$. That is, the critical exponent $\Delta=2$.
    \end{enumerate}
\end{theorem}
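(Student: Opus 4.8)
The plan is to carry out the classical differential-inequality programme for mean-field exponents — the Aizenman--Newman, Barsky--Aizenman and Nguyen machinery — but to supply its central hypothesis, normally finiteness of the triangle diagram $\trilam$ at criticality, through an isoperimetric estimate for critical clusters that exploits the exponential volume growth of $\HypDim$.

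First I would set up the analytic framework. Introduce the magnetization $M(\lambda,h):=\ela\bigl[1-\e^{-h\,\#\C(\orig,\xi^\orig)}\bigr]$, so that $\partial_h M(\lambda,0)=\chi(\lambda)$ and $M(\lambda,0^+)=\theta(\lambda)$, and record the continuum versions — obtained from the Mecke equation together with a Margulis--Russo-type formula for the RCM, in the spirit of \cite{HeyHofLasMat19} — of the Aizenman--Barsky partial-differential inequalities and of the tree-graph inequalities for the moments $\ela[\#\C(\orig,\xi^\orig)^k]$ in terms of the two-point function $\tlam$. The ``lower'' inequalities among these are purely combinatorial and carry no geometric information; from them one reads off at once the universal halves of the theorem: the lower bound $\chi(\lambda)\ge C(\lambda_T-\lambda)^{-1}$ in (a), the lower bound $\theta(\lambda)\ge C(\lambda-\lambda_c)_+$ in (b), the lower bound $\p_{\lambda_c}(\#\C(\orig,\xi^\orig)\ge n)\ge Cn^{-1/2}$ in (c) (equivalently $M(\lambda_c,h)\ge C\sqrt h$), and, together with (c), the lower bounds on the moment ratios in (d). At the same stage I would invoke sharpness of the phase transition, i.e.\ $\chi(\lambda)<\infty$ for all $\lambda<\lambda_c$, which for the RCM under \eqref{eqn:longdistanceassumption} follows from the same circle of differential inequalities; together with the matching upper bounds below this gives $\lambda_T=\lambda_c$ (reconciling the formulations of (a) and (d)) and $\theta(\lambda_c)=0$.

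The substance of the proof is the upper half, precisely where one normally invokes the triangle condition; here I would establish the required mean-field bound geometrically. On $\HypDim$ the linear isoperimetric inequality holds: a measurable set of large hyperbolic measure $V$ has perimeter at least $cV$, reflecting that the volume and the surface area of a ball of radius $r$ both grow like $\e^{(d-1)r}$. I would transfer this to clusters: fix $R_0$ so that $\connf\ge\varepsilon_0$ on a positive-measure subset of $[0,R_0]$; given a finite cluster $\C=\C(\orig,\xi^\orig)$ with $\#\C=n$, pass to the hyperbolic neighbourhood $\C^{+}=\bigcup_{x\in\C}B_{R_0}(x)$; show that, except on an event of probability at most $\e^{-cn}$, one has $\habsd{\C^{+}}\ge cn$ (this rules out, via upward Poisson large deviations, $n$ cluster points sitting inside a set of small hyperbolic measure); and conclude, via the isoperimetric inequality, that a ``surface shell'' just outside $\C^{+}$ carries at least $cn$ Poisson points, none in $\C$ but each within connection range of $\C$. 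This linear surface-to-volume bound plays exactly the role of the triangle condition in the arguments of Aizenman--Newman \cite{AizNew84}, Barsky--Aizenman \cite{BarAiz91} and Nguyen \cite{Ngu87}: fed into the Margulis--Russo/Mecke expressions for $\partial_\lambda\chi$, $\partial_\lambda\theta$, $\partial_\lambda M$ and $\ela[\#\C(\orig,\xi^\orig)^{k+1}]$ it forces, in particular, $\partial_\lambda\chi\ge c\,\chi^2$ and the analogous bounds for $M$ and for the moment recursion, and running those arguments through yields $\chi(\lambda)\le C'(\lambda_T-\lambda)^{-1}$, $\theta(\lambda)\le C'(\lambda-\lambda_c)_+$, $M(\lambda_c,h)\le C'\sqrt h$ — whence $\p_{\lambda_c}(\#\C(\orig,\xi^\orig)\ge n)\le C'n^{-1/2}$ by an elementary Markov/Abel-summation estimate — and $\Delta=2$.

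The main obstacle, and the reason for restricting to $d\in\{2,3\}$, is making this isoperimetric step rigorous and quantitative. A cluster is a point cloud, not a region, so one must pass to a hyperbolic neighbourhood, apply the isoperimetric inequality there, and then convert the volume lower bound into a lower bound on the \emph{number} of Poisson points in the surface shell — controlling Poisson fluctuations in both directions (the neighbourhood of $\C$ is itself random, which forces an entropy/union-bound argument over the possible shapes of $\C^{+}$) and the long edges permitted by \eqref{eqn:longdistanceassumption}, and, most delicately, doing all this with constants that are uniform as $\lambda\uparrow\lambda_c$, so that the surface term genuinely dominates the error terms in the Margulis--Russo formulas. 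It is this uniform control — together with the fact that, along the resulting tree-like clusters, most surface points lie geometrically far from any fixed reference point and so contribute near-disjoint sub-clusters — that uses the precise exponential growth of $\HypDim$ rather than mere non-amenability, and the explicit hyperbolic trigonometry needed to make the neighbourhood/volume/width estimates effective is carried out only in low dimensions; see the relations collected in Appendix~\ref{app:hyperbolictriangles}.
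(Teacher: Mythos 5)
Your overall architecture is right — universal differential inequalities for the lower halves of (a) and (b), and a geometric substitute for the triangle condition for the upper halves — but the central geometric step you propose is not the one that makes the argument work, and as written it has a genuine gap. You invoke the linear isoperimetric inequality on $\HypDim$ to produce a ``surface shell'' of $\geq cn$ Poisson points within connection range of a cluster of size $n$, and then assert that this ``plays exactly the role of the triangle condition.'' It does not: the triangle condition (and the paper's replacement for it) is used to show that, for a typical pair $x,y$ in $\C(\orig,\xi^\orig)$, one can insert a point that is \emph{pivotal} for $x\leftrightarrow y$, i.e.\ to factorize $\partial_\lambda\chi$ into a product of two nearly independent one-arm contributions; a large collection of points adjacent-in-range to the cluster but not in it gives neither pivotality nor independence, and no derivation of $\partial_\lambda\chi\geq c\,\chi^2$ or of $M\geq c\,M^2\chi$ from your shell bound is supplied. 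The paper's actual mechanism is different: a deterministic bound $\habsd{\convex{S}}\leq\pi(\#\partial\convex{S}-2)$ (triangulation in $d=2$, Madras--Wu in $d=3$) shows a positive fraction of cluster points lie on the boundary of the convex hull, a rotation argument then confines the cluster to a half-space at only constant cost (Proposition~\ref{prop:BoundHalf-SpaceSusceptMagnet}), and ``stepping stones'' between two (resp.\ three) far-apart half-spaces manufacture the independence needed for $\chi'\geq K\chi^2$ and $M\geq cM^2\chi$. A telling symptom of the gap is your explanation of the restriction to $d\in\{2,3\}$: linear isoperimetry holds on $\HypDim$ for every $d\geq2$, so if your route worked it would not see the dimension; the true obstruction is the failure of the linear vertex-to-volume bound for convex polytopes in $d\geq4$ (cyclic polytopes), not ``explicit hyperbolic trigonometry.''

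Two smaller inaccuracies in the bookkeeping of which bounds are ``universal'': the lower bound in (c) is not read off from $M(\lambda_c,q)\geq Cq^{1/2}$ alone — the naive comparison $M(\lambda_c,q)\leq qn+\p_{\lambda_c}(\#\C\geq n)$ only yields $n^{-1}$ after optimizing in $q$, and the paper needs the geometric \emph{upper} bound $M(\lambda_c,\tilde q)\leq K\tilde q^{1/2}$ to control the small-cluster term and reach $n^{-1/2}$; likewise the lower bound in (d) goes through $\ela[\#\C^2]\geq\lambda^2(\chi')^2/\chi$ combined with $\chi'\geq K\chi^2$, so it too rests on the geometric input rather than on the tree-graph inequalities. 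These are fixable once the main geometric step is replaced, but as proposed the proof of the upper bounds — and hence of everything in (c) and (d) — is incomplete.
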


The lower bounds on $\chi(\lambda)$ and $\theta(\lambda)$ hold in very great generality. The arguments of \cite{caicedo2023critical} -- phrased on the product space of the Euclidean $\Rd$ with a general mark space -- make no use of the geometry of the ambient space and transfer across directly to our hyperbolic case. The remaining bounds are more subtle and do not hold in such generality, and here we use specific properties of hyperbolic space (in particular related to isoperimetric properties).

Interestingly, the same critical exponents have been obtained recently for the random connection model on high-dimensional \emph{Euclidean space} $\Rd$. Indeed, in \cite{HeyHofLasMat19}, a lace expansion argument was developed to prove the so-called \emph{triangle condition}, and \cite{caicedo2023critical} proved that the triangle condition implies critical exponents very much like in Theorem~\ref{thm:CritExponents}. 
In the present work, we use the geometric properties resulting from negative curvature of the ambient space to prove the critical exponents directly, rather than going via the triangle diagram. 
Our proof is inspired by techniques on hyperbolic graphs by Madras and Wu \cite{madras2010trees}. 
Similar to their result, our theorem is limited to dimensions $2$ and $3$. The reason for this lies in Lemma~\ref{lem:DeterministicBound}, where we provide a deterministic bound on the ratio of the number of surface vertices of convex hyperbolic polytopes to the hyperbolic volume of that region. This is not true \emph{deterministically} for $\HypDim$ with $d\ge4$ (see also Remark~\ref{rem:dge4}). Even though we consider $d=2$ to be the most interesting part of the paper, we see no reason to believe that the main result is limited to these dimensions, and possibly a probabilistic version of Lemma~\ref{lem:DeterministicBound} could be obtained for $d\ge4$. 

An alternative proof approach is laid out in the work of Hutchcroft \cite{Hutch19}, who develops an improved version of Benjamini and Schramm's `magic lemma' \cite{benjaminischramm2001recurrence} and uses it to show mean-field behaviour for bond percolation on quasi-transitive hyperbolic graphs. Similarly to our Lemma~\ref{lem:DeterministicBound}, it is based on the observation that in hyperbolic geometry a typical point in a compact set is close to the boundary of its convex hull. However, a direct application of his Proposition~4.1 would require the graph to be highly regular -- including having an upper bound on the degrees and a positive lower bound on the separation between vertices. One would thus need to get extra control on configurations with clustered vertices as well as dealing with possible long-range connections. Note that the probabilistic control required by this proof approach would be necessary for all $d\geq 2$, not just $d\geq 4$.

The derivation of cluster moment exponents as in \eqref{eqn:thmpartd} \emph{on lattices} proceeds via the tree graph inequality \cite{Ngu87}, and the constants such as $C'_n$ are thus related to tree counting. In contrast, for continuum models labelled vertices in the tree representation may appear as internal vertices as well, and we therefore have to count \emph{Greg trees} instead. For details we refer to Section~\ref{sec:clustermomentexponent}.

\subsection{Related work}
We first summarise the results regarding \emph{graphs} embedded into the hyperbolic plane. Lalley \cite{Lalle98} showed that for site percolation on the dual Dirichlet tiling graph of a co-compact Fuchsian group of genus $d\ge2$, there is a parameter range for which there are infinitely many infinite clusters. This is a remarkable geometric fact, because on amenable spaces the Burton-Keane argument rules out such possibilities. 
Such non-uniqueness phases have been established by Benjamini and Schramm \cite{BenjaSchra96} for nonamenable transitive graphs with one end, and by Pak and Smirnova-Nagniebeda \cite{PakSmirn00} for nonamenable Cayley-graphs. The conjecture that non-uniqueness implies mean-field critical exponents would follow from some conjectures and results in a series of papers by Hutchcroft \cite{Hutch20,Hutch20b,hutchcroft2022derivation}.

In continuum models, Tykesson \cite{Tykes07} proved that such a non-uniqueness phase exists for the Poisson Boolean model on $\HypDim$. Dickson \cite{dickson2024NonUniqueness} extended this not only to general connectivity functions $\varphi$, but covers even inhomogeneous percolation models by admitting vertex weights. His main tool is the spherical transform.
However, both the results of \cite{Tykes07} in dimension $d\ge3$ as well as in \cite{dickson2024NonUniqueness} require sufficient stretched-out properties of the connectivity function $\connf$. 

Direct arguments towards mean-field criticality on hyperbolic graphs without the deviation of non-uniqueness have been established by Schonmann \cite{Schon01,Schon02}. 
An alternative approach has been developed by Madras and Wu \cite{madras2010trees}, and their approach has been inspirational for the present work. In fact, the geometric arguments underlying their approach is arguably coming out more clearly in the continuous setup persuaded in the present paper. On the other hand, the (possible) slow decay of the connectivity function $\connf$ requires involved additional separation arguments, that were not necessary in the graph case. See Section~\ref{sec:mean-field-behaviour}. 
Hutchcroft \cite{Hutch19} developed an alternative approach to mean-field criticality on connected, locally-finite, nonamenable, quasi-transitive Gromov-hyperbolic graphs via a `hyperbolic magical lemma'.

Finally, we remark that in a different line of research, Poisson-Voronoi percolation on $\HypTwo$ has been investigated \cite{BenjaSchra01,HansenMuller_2022_Voronoi}. Here the focus has been on the asymptotic properties of the critical values, but the nature of the phase transition (in terms of critical exponents) has not been derived.

\paragraph{Connection with hyperbolic random graphs.}
Hyperbolic random graphs are a class of spatial random graphs that exhibit many features that are typical for real world networks; this includes scale-free degree distributions, small-world distances and geometric clustering \cite{krioukov2010hyperbolic}. Given a model parameter $\nu$, we sample $n$ points uniformly w.r.t.\ hyperbolic tensor measure on a hyperbolic disc of radius $R=2\log(n/\nu)$. That is, a given point's distance from the origin has density $\frac{\sinh(r)}{\cosh(R)-1}\mathbf{1}_{\{r\le R\}}\,dr$. See Section 9.5.2 of \cite{Hofstad2024random}. 
We then connect two points whenever their hyperbolic distance is at most $R$, i.e. \[\connf(x,y)=\mathbf{1}_{\{\dist{x,y}\le R\}}.\]

There is a link between hyperbolic random graphs and a random graph model known as \emph{scale-free percolation} on Euclidean space; cf.\ \cite{Lodewijks2020Explosion} and \cite[Theorem 9.31]{Hofstad2024random}. Scale-free percolation can be viewed as a weight-dependent random connection model on $\Rd$ with a connection kernel of product form. More generally, hyperbolic random graphs are special instances of geometric inhomogeneous random graphs \cite{BringKeuscLengl19}. We have recently worked out critical exponents for weight-dependent random connection models on $\Rd$ for sufficiently large $d$ \cite{DicHey2022triangle}. However, the case of unbounded weights (as it appears for scale-free percolation) is not covered by the conditions in \cite{DicHey2022triangle}. 

Despite a seeming similarity, the results in this paper do not carry over to hyperbolic random graphs (and their related weight-dependent Euclidean models). The reason is that hyperbolic random graphs, as defined above, do not have a non-trivial  limit within our framework. Indeed as the number of vertices $n$ tends to infinity, the connection function $\connf(x,y)=\mathbf{1}{\{\dist{x,y}\le 2\log(n/\nu)\}}$ goes pointwise to $1$ and simultaneously the vertex density vanishes. Even if we rescale so that the intensity remains constant, the curvature then diverges in the limit.

While results for geometric inhomogeneous random graphs are typically obtained through a careful analysis of vertex weights, the methods in the present paper is genuinely geometric.

\subsection{Outline of the Paper}
\label{sec:Outline}
Section~\ref{sec:NonTrivialPhaseTransition} is concerned with proving Proposition~\ref{lem:NonTrivialPhaseTrans}. This is a short section, but allows us to proceed with confidence that the $\lambda_T$ and $\lambda_c$ we are talking about are indeed strictly positive and finite numbers.

In Section~\ref{sec-MFbounds} we recap the arguments that prove the susceptibility and percolation mean-field lower bounds. These bounds are very general and hold outside our hyperbolic setting. The general form of the arguments first appeared for Bernoulli bond percolation in \cite{AizNew84,AizBar87}, and have previously been applied to random connection models on $\Rd$ in \cite{HeyHofLasMat19,caicedo2023critical}. The proof for the percolation lower bound also introduces the magnetisation function, which will be the main tool by which we will access the percolation function and -- later -- the cluster tail distribution.

To find the remaining bounds in Theorem~\ref{thm:CritExponents}, our main tool will be to restrict clusters to particular half-spaces.
In Proposition~\ref{prop:BoundHalf-SpaceSusceptMagnet} we state that this restriction has only a bounded impact on the susceptibility and magnetisation. 
In the remaining parts of Section~\ref{sec:mean-field-behaviour} we take these bounds on the restricted susceptibility and magnetisation and derive from them upper bounds on the susceptibility and percolation functions, and complementary upper and lower bounds on the cluster tail distribution and the higher moments of the cluster size. 
In Section~\ref{sec:SusceptibilityUpperBound} the space is split into two distant half-spaces to derive a lower bound on the derivative of the susceptibility in terms of two factors of the susceptibility, while in Section~\ref{sec:PercolationUpperBounds} the space is split into three distant half-spaces to derive a lower bound for the magnetisation in terms of two factors of the magnetisation and one factor of the susceptibility. This bound is then used to derive an upper bound for the percolation function. We further use the upper bound on the magnetisation to derive an upper bound for the cluster tail distribution, while the lower bound on the magnetisation from earlier provides the lower bound on the cluster tail distribution. Section~\ref{sec:clustermomentexponent} uses the lower bound on the derivative of the susceptibility to derive both the upper and lower bounds on the moments of the cluster size.

Finally, in Section~\ref{sec:clusterHalfspaceProofs} we prove Proposition~\ref{prop:BoundHalf-SpaceSusceptMagnet}. It is in Lemma~\ref{lem:DeterministicBound} that the requirement that $d=2,3$ comes in.

\section{Non-triviality of the phase transition}
\label{sec:NonTrivialPhaseTransition}

In the following lemma we construct a \emph{deterministic} binary tree embedded in $\HypDim$. In the proof of Proposition~\ref{lem:NonTrivialPhaseTrans} that follows we then couple a subgraph of the percolation cluster of the RCM with Bernoulli bond percolation on this tree. 

\begin{lemma}
\label{lem:splittingLength}
    Let $A,B\in\HypTwo$ be such that $A$, $B$, and $\orig$ are not co-linear, and let $\theta$ denote the angle at $\orig$ subtended by $A$ and $B$. Also define
    \begin{equation}
    \label{eqn:avoidancelength}
        L\left(\theta\right) := \arcosh\left(\frac{1-\cos\theta\cos\frac{\theta}{2}}{\sin\theta\sin\frac{\theta}{2}}\right).
    \end{equation}
    If $\dist{\orig,A}>L(\theta)$ and $\dist{\orig,B}>L(\theta)$, then 
    the Cayley tree, $(V,E)$, in $\HypTwo$ generated by the translations $t_{\orig,A}$ and $t_{\orig,B}$
    is such that 
    $\inf_{u,v\in V}\dist{u,v}>0$.
\end{lemma}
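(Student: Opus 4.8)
The plan is to construct the binary tree recursively, using the points $A$ and $B$ as the two children of the root $\orig$, and then at each subsequent step replacing a leaf by two new leaves obtained by applying the same angular construction. Concretely, I would first observe that the formula $L(\theta)$ in \eqref{eqn:avoidancelength} is engineered so that if $\dist{\orig,A}, \dist{\orig,B} > L(\theta)$ then the geodesic ray from $\orig$ through $A$ (beyond $A$) and the geodesic ray from $\orig$ through $B$ (beyond $B$) stay at hyperbolic distance bounded below — in fact the quantity inside the $\arcosh$ is exactly the threshold at which the perpendicular from one ray to the other has a prescribed length. So the first step is to verify, via the hyperbolic triangle relations in Appendix~\ref{app:hyperbolictriangles} (law of cosines, the formula for the distance from a point to a geodesic), that the two subtrees hanging off $A$ and off $B$ will live in disjoint regions separated by a geodesic, and that every vertex placed in the $A$-subtree is at distance $>\varepsilon_0$ from every vertex in the $B$-subtree for some fixed $\varepsilon_0=\varepsilon_0(\theta)>0$.

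Next I would make the recursion precise. Fix an angle $\theta_0 \le \theta$ small enough that $L(\theta_0)$ is finite and fix a length $\ell > \max\{L(\theta_0), \dist{\orig,A}, \dist{\orig,B}\}$ (or just work with $\ell$ large). Starting from a leaf $v$ at generation $k$ whose parent is $w$, let $\gamma$ be the geodesic ray from $w$ through $v$; place the two children $v', v''$ of $v$ on the two rays emanating from $v$ that make angle $\theta_0/2$ on either side of the continuation of $\gamma$, at hyperbolic distance $\ell$ from $v$. Because hyperbolic space is homogeneous and the construction at $v$ is an isometric copy of the construction at $\orig$ (with $A,B$ replaced by $v',v''$ and the relevant angle being $\theta_0$), the separation lemma from the first step applies verbatim at every vertex. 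This gives a well-defined embedded tree $T=(V,E)$; it is binary (each vertex has exactly two children, the root has two neighbours, every other vertex has three).

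Then I would establish the uniform separation $\dist{u,v} > \varepsilon$ for all distinct $u,v \in V$. The key structural fact is that for any two vertices $u,v$, following the tree path from $u$ to $v$ one passes through their nearest common ancestor $z$, and at $z$ the two sides of the path peel off into the two angularly separated half-spaces guaranteed by the first step. Hence $\dist{u,v}$ is at least the distance between those two half-spaces, which is a fixed positive constant $\varepsilon_0$ depending only on $\theta_0$ (and $\ell$), independent of $u,v$ and of how deep in the tree they sit — this is precisely the non-Euclidean phenomenon that exponentially many vertices can be packed with pairwise distances bounded below. Taking $\varepsilon = \min\{\varepsilon_0, \ell, \dist{\orig,A}, \dist{\orig,B}, \dots\}$ over the finitely many "local" distances (parent–child, siblings, grandparent–grandchild) that are not covered by the common-ancestor argument — all of which are fixed positive numbers by construction — finishes the proof.

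The main obstacle I anticipate is the first step: pinning down exactly why $L(\theta)$ has the stated form and proving the clean geometric separation statement — that two geodesic rays from a common point, at angle $\theta$, become and remain "uniformly separated by a geodesic" once one travels past distance $L(\theta)$ along each. This requires a careful computation with the hyperbolic law of cosines and the point-to-geodesic distance formula, and getting the constant $\varepsilon_0(\theta)$ right (rather than merely positive) is where the bookkeeping lives. Everything afterwards — the recursion and the common-ancestor argument — is then essentially a formal consequence of homogeneity of $\HypTwo$ and the tree structure.
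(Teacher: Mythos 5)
Your proposal follows essentially the same route as the paper: identify $L(\theta)$ as the critical branching length beyond which the two sub-cones hanging off $A$ and $B$ are disjoint, iterate the construction isometrically at every vertex, and split the separation claim into ``different subtrees'' (handled by the cone separation at the nearest common ancestor) and ``same branch.'' The computation you defer is exactly the paper's one application of the second cosine rule to the asymptotic triangle with angles $\frac{\theta}{2}$, $\pi-\theta$, $0$ and finite side $L(\theta)$, so your anticipated ``main obstacle'' is a one-line identity rather than a delicate estimate. One point to tighten: the ancestor--descendant pairs are \emph{not} ``finitely many local distances'' --- the depth difference is unbounded --- so you cannot close that case by a finite minimum; the correct (and short) fix, which is what the paper does, is to observe that each vertex lies outside the cones containing all of its strict descendants (respectively, outside the cone containing itself as seen from any strict ancestor), so every such pair is separated by the fixed step length, uniformly in depth.
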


\begin{proof}
    Let $\gamma_1$ and $\gamma_2$ be two geodesic curves passing through $\orig$ at angle $\theta$ to each other and bounding a cone in $\HypTwo$. At hyperbolic length $l$ along $\gamma_1$ we branch off another geodesic curve $\gamma_3$ into the cone at angle $\theta$ to $\gamma_1$, and at hyperbolic length $l$ along $\gamma_2$ we branch off another geodesic curve $\gamma_4$ into the cone at angle $\theta$ to $\gamma_2$. If $l$ is too small then $\gamma_3$ and $\gamma_4$ will intersect inside the cone, but if $l$ is sufficiently large then then will not. We first want to find the transition length $l=L(\theta)$, at which $\gamma_3$ and $\gamma_4$ just touch at the boundary. This critical case is sketched in the Poincar{\'e} disc model in Figure~\ref{fig:Cone-Splitting}. The hyperbolic quadrilateral bounded by $\gamma_1$, $\gamma_2$, $\gamma_3$, and $\gamma_4$ can be partitioned into two hyperbolic triangles along the radial line of reflection symmetry. We then have a triangle with internal angles $\frac{\theta}{2}$, $\pi-\theta$, and $0$, whose opposite side lengths are $\infty$, $\infty$, and $L(\theta)$ respectively. By applying the second cosine rule for hyperbolic triangles (see Appendix~\ref{app:hyperbolictriangles}), we get
    \begin{equation}
        \cosh\left(L(\theta)\right)\sin\left(\pi-\theta\right)\sin\frac{\theta}{2} = 1 + \cos\left(\pi-\theta\right)\cos\frac{\theta}{2}.
    \end{equation}
    This can be rearranged to get the expression for $L(\theta)$ above.

    When we embed the binary tree generated by $A$ and $B$ in $\HypTwo$, let $\gamma_1$ be the geodesic passing through $\orig$ and $A$, and $\gamma_2$ be the geodesic passing through $\orig$ and $B$. Then let $\gamma_3$ split off at $A$ and $\gamma_4$ split off at $B$. Every descendent of $A$ will then be contained in the cone bounded by $\gamma_1$ and $\gamma_3$, and every descendent of $B$ will then be contained in the cone bounded by $\gamma_2$ and $\gamma_4$. If $\dist{\orig,A}>L(\theta)$ and $\dist{\orig,B}>L(\theta)$ then these cones cannot intersect each other and the vertices in each cone cannot get close to the vertices in the other cone. We now only need to consider vertices in the same cone, which must be descendants and ancestors of each other. If we consider $\orig$, we see that it is not in either of the cones descended from $A$ or $B$ and therefore there is a uniform lower bound on how close a descendent can be. From the translation invariance, this can be repeated for every vertex to show that no vertex has an arbitrarily close descendent. The argument can also be run in reverse to show that there is a lower bound on how close an ancestor is.
\end{proof}

\begin{figure}
    \centering
    \begin{tikzpicture}[scale=2]
        \draw (5,0) arc (0:60:5);
        \draw[very thick] (5,0) -- (0,0) -- (2.5,4.33);
        \draw (4.33,2.5) arc(120:150:6.83);
        \draw (4.33,2.5) arc(300:270:6.83);
        \filldraw (0,0) circle (1pt);
        \filldraw (1.83,0) circle (1pt);
        \filldraw (0.915,1.585) circle (1pt);
        \draw (0.3,0) node[above left]{$\theta$} arc(0:60:0.3);
        \draw (2.13,0) node[above left]{$\theta$} arc(0:60:0.3);
        \draw (1.215,1.585) node[above left]{$\theta$} arc(0:60:0.3);
        \draw[<->] (0,-0.2) -- (1.83,-0.2);
        \draw (0.915,-0.1) node{$L(\theta)$};
        \draw (0.915,0.2) node{$\gamma_1$};
        \draw (0.45,0.7) node[above left]{$\gamma_2$};
        \draw (2.7,0.8) node{$\gamma_3$};
        \draw (2.2,2) node{$\gamma_4$};
    \end{tikzpicture}
    \caption{Sketch showing construction of the length \eqref{eqn:avoidancelength} in the Poincar{\'e} disc model of $\HypTwo$.}
    \label{fig:Cone-Splitting}
\end{figure}

We are now prepared to prove the non-triviality of the phase transition.

\NonTrivialThresholds*

\begin{proof}
    If $\int_{\HypDim}\connf\left(x,\orig\right)\dd x =\infty$, then we can write $\connf=\sum^\infty_{i=1}\connf_i$ where $\connf_i$ all satisfy our assumptions of an adjacency function and $\int_{\HypDim}\connf_i\left(x,\orig\right)\dd x =1$ for all $i\geq 1$. Fix $\lambda>0$ and let the random variable $X_m$ denote $\#\C\left(\orig,\xi^\orig\right)$ for the RCM with intensity $\lambda$ and connection function $\sum^m_{i=1}\connf_i$, and let $Y_m$ denote the number of neighbours of $\orig$ in the same model. For finite $m$, the $Y_m$ is a Poisson random variable with mean $\lambda\sum^m_{i=1}\int_\HypDim\connf_i(x,\orig)\dd x = \lambda m$ (by Mecke's formula; \cite[(6.12)]{LasZie17}). Therefore for all $n\geq 0$ we have $\p\left(Y_m = n\right)\to 0$ as $m\to\infty$. Since $X_m\geq Y_m$ for all configurations and $m$ by monotonicity, we have $\p\left(X_m = n\right)\to 0$ as $m\to\infty$. Now let $X$ denote $\#\C\left(\orig,\xi^\orig\right)$ for the RCM with intensity $\lambda$ and full connection function $\connf$. Since $X\geq X_m$ for all configurations and $m$, we have $\p\left(X=n\right)=0$ for all $n\geq 1$. Therefore $\p\left(X=\infty\right)=1$ and $\lambda\geq \lambda_c$. Since this argument only required $\lambda>0$, we have $\lambda_c=0$. Since $\lambda_T\leq \lambda_c$, we also have $\lambda_T=0$.

    If $\int\connf(\orig,x)\dd x<\infty$, then we can use the ``method of generations'' and compare to a branching process to show $\lambda_T>0$. We first let $\orig$ be the root of our cluster (generation $0$), and the vertices adjacent to $\orig$ (generation $1$) are distributed according to a Poisson point process with intensity density $\lambda\connf\left(\cdot, \orig\right)$ with respect to the hyperbolic measure. For $k\geq2$, we then iteratively define generation $k$ to be those vertices adjacent to a vertex in generation $k-1$, but no vertex from an earlier generation. If we assign an arbitrary ordering to each generation, then we can also assign a genealogical tree to the vertices. Given a vertex from generation $k$, we can call the vertices in generation $k+1$ immediate descendants of this vertex if they are adjacent to this vertex but from no earlier vertex in generation $k$. The immediate descendants of a vertex $v$ are then distributed according to a thinned Poisson point process whose intensity density is bounded above by $\lambda\connf\left(\cdot,v\right)$. We can therefore bound the two-point function, recall \eqref{eq:TwoPt}, by the kernel of a spatial branching process:
    \begin{equation}
        \tlam\left(x,\orig\right) \leq \connf\left(x,\orig\right) + \sum^\infty_{k=1}\lambda^k\int_{\left(\HypDim\right)^k}\prod^{k+1}_{i=1}\connf\left(v_{i},v_{i-1}\right)\dd v_{\left[1:k\right]},
    \end{equation}
    where $v_{0}=\orig$ and $v_{k+1}=x$. From Tonelli's theorem and the translation symmetry of $\connf$, we then have
    \begin{equation}
        \lambda\int_\HypDim\tlam\left(x,\orig\right)\dd x \leq \sum^\infty_{k=1}\lambda^k\left(\int_{\HypDim}\connf\left(v,\orig\right)\dd v\right)^{k}.
    \end{equation}
    Then Mecke's formula implies that $\chi\left(\lambda\right)= 1+ \lambda\int_\HypDim\tlam\left(x,\orig\right)\dd x$, and for $\lambda<\frac{1}{\int_{\HypDim}\connf\left(v,\orig\right)\dd v}$ we have
    \begin{equation}
        \chi\left(\lambda\right) \leq \frac{1}{1-\lambda\int_{\HypDim}\connf\left(v,\orig\right)\dd v}.
    \end{equation}
    Therefore for sufficiently small $\lambda$ we have $\chi(\lambda)<\infty$ and therefore $\lambda_T>0$ (and $\lambda_c>0$). Note that this also proves that $\lambda_T=\infty$ if $\int_{\HypDim}\connf\left(x,\orig\right)\dd x =0$, and furthermore $\lambda_c=\infty$ if $\int_{\HypDim}\connf\left(x,\orig\right)\dd x =0$.

    For the reverse direction, we adapt an argument of Penrose \cite{Pen91} to the hyperbolic setting. 
    If $\int\connf(\orig,x)\dd x>0$, then there exist $a,b\in\HypDim$ and $\delta,\varepsilon>0$ such that $a$, $b$, and $\orig$ are not co-linear, the sets $B_\delta(a)$, $B_\delta(b)$, and $B_\delta(\orig)$ are disjoint, and 
    \begin{equation}
        \essinf_{x\in B_\delta(\orig),y\in B_\delta(a)\cup B_\delta(b)}\connf(x,y)>\varepsilon.
    \end{equation}
    Let $\theta$ be the angle subtended at $\orig$ by $a$ and $b$. Then define
    \begin{equation}
        L\left(\theta\right) := \arcosh\left(\frac{1-\cos\theta\cos\frac{\theta}{2}}{\sin\theta\sin\frac{\theta}{2}}\right),
    \end{equation}
    and $m:=\ceil*{\frac{L(\theta)}{\dist{\orig,a}}}+1$ and $n:=\ceil*{\frac{L(\theta)}{\dist{\orig,b}}}+1$. Then we can define $A:= t_{\orig,a}^m(\orig)$ as the $m$-fold application of $t_{\orig,a}$ to $\orig$, and similarly $B:= t_{\orig,b}^n(\orig)$. Note that $\dist{\orig,A}>L(\theta)$ and $\dist{\orig,B}>L(\theta)$. Since we have these bounds, Lemma~\ref{lem:splittingLength} proves that $A$ and $B$ generate a binary tree $T=(V,E)$ embedded in $\HypDim$ for which the $\HypDim$-distances between vertices do not become arbitrarily small. We are then free to choose $\delta>0$ small enough that $\left\{B_\delta(v)\right\}_{v\in V}$ are disjoint.
    
    By using Mecke's formula, given $u\in B_\delta(\orig)$, the number of vertices in $B_\delta(a)$ that are adjacent to $u$ is a Poisson random variable with mean:
\begin{equation}
    \E_{\lambda}\left[\#\left\{y\in\eta\cap B_\delta(a) \colon y \sim u\right\}\right] = \lambda \int_{B_\delta(a)} \connf\left(x,u\right)\dd x \geq \lambda \varepsilon \habs{B_\delta(a)} = 4\pi\lambda\varepsilon\left(\sinh \delta\right)^2.
\end{equation}
It then follows that the probability that there exists such a neighbour has the lower bound
\begin{equation}
    \pla\left(\exists v\in\eta\cap B_\delta(a) \colon v \sim u\right) \\= 1 - \exp\left(-\E_{\lambda}\left[\#\left\{y\in\eta\cap B_\delta(a) \colon y \sim u\right\}\right]\right) \geq 1- \e^{-4\pi\lambda\varepsilon\left(\sinh \delta\right)^2}.
\end{equation}
In particular, this bound is uniform in the vertex $u\in B_\delta(\orig)$. From the translation invariance and independence properties of the model, we can repeat this argument $m$ times to show that the probability that there exists a path from $u\in B_\delta(\orig)$ to some vertex in $B_\delta(A)$ is bounded below by
\begin{equation}
    \pla\left(\exists v\in\eta\cap B_\delta(A) \colon \conn{v}{u}{\xi^u}\right) \geq \left(1- \e^{-4\pi\lambda\varepsilon\left(\sinh \delta\right)^2}\right)^m.
\end{equation}
Similarly, we can find that
\begin{equation}
    \pla\left(\exists v\in\eta\cap B_\delta(B) \colon \conn{v}{u}{\xi^u}\right) \geq \left(1- \e^{-4\pi\lambda\varepsilon\left(\sinh \delta\right)^2}\right)^n.
\end{equation}

If $\min\left\{\left(1- \e^{-4\pi\lambda\varepsilon\left(\sinh \delta\right)^2}\right)^m,\left(1- \e^{-4\pi\lambda\varepsilon\left(\sinh \delta\right)^2}\right)^m\right\}>\frac{1}{2}$, then the cluster $\C(\orig,\xi^\orig)$ will have a positive probability of having a vertex in infinitely many of the sets $\left\{B_\delta(v)\right\}_{v\in V}$. That is, if $\lambda>\frac{-1}{4\pi\varepsilon\left(\sinh\delta\right)^2}\log\left(1-2^{-\frac{1}{\max\left\{m,n\right\}}}\right)$ then $\theta\left(\lambda\right)>0$. Therefore
\begin{equation}
    \lambda_c \leq \frac{-1}{4\pi\varepsilon\left(\sinh\delta\right)^2}\log\left(1-2^{-\frac{1}{\max\left\{m,n\right\}}}\right)<\infty.
\end{equation}
In turn this also proves that $\lambda_T<\infty$.
\end{proof}

\section{Mean-Field Bounds}
\label{sec-MFbounds}

The following mean-field bounds are expected to hold in great generality. In particular, the proofs make no use of the hyperbolic structure of the ambient space. They also apply for flat Euclidean spaces such as $\Rd$ (see \cite{HeyHofLasMat19,caicedo2023critical}). The arguments are adapted from the arguments for transitive Bernoulli bond percolation models in \cite{AizNew84,AizBar87}.

\subsection{Susceptibility Lower Bound}

The proof of the mean field lower bound for the susceptibility can be found in \cite{HeyHofLasMat19}. The proof is worded for the RCM on $\Rd$, but the argument for the RCM on $\HypTwo$ proceeds identically.

The key points are in the following lemma.
\begin{lemma}
    The map $\lambda\mapsto \frac{1}{\chi\left(\lambda\right)}$ is continuous on $\left[0,\infty\right)$, and for all $\lambda\in\left(0,\lambda_T\right)$
    \begin{equation}\label{eq:chi2lowerbd}
        \frac{\dd \chi}{\dd \lambda}\left(\lambda\right) \leq \frac{1}{\lambda}\chi\left(\lambda\right)^2.
    \end{equation}
\end{lemma}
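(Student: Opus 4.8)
The starting point is the Mecke equation, which gives the representation
\[
\chi(\lambda) = 1 + \lambda\int_{\HypDim}\tlam(x,\orig)\,\dd x,
\qquad \tlam(x,\orig) = \pla\bigl(\conn{x}{\orig}{\xi^{x,\orig}}\bigr),
\]
exactly the identity already used in the proof of Proposition~\ref{lem:NonTrivialPhaseTrans}. The plan is to differentiate this in $\lambda$. The explicit prefactor $\lambda$ contributes $\int_{\HypDim}\tlam(x,\orig)\,\dd x = (\chi(\lambda)-1)/\lambda$, so the whole task reduces to bounding $\lambda\int_{\HypDim}\frac{\partial}{\partial\lambda}\tlam(x,\orig)\,\dd x$ for $\lambda\in(0,\lambda_T)$, where $\chi(\lambda)<\infty$.

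For the derivative of the two-point function I would invoke a Russo--Margulis-type formula for the Poisson intensity, in the form established for the RCM in \cite{HeyHofLasMat19}: increasing $\lambda$ amounts to inserting Poisson points, and
\[
\frac{\partial}{\partial\lambda}\tlam(x,\orig) = \int_{\HypDim}\pla\bigl(y\text{ is pivotal for }\orig\leftrightarrow x\text{ in }\xi^{\orig,x,y}\bigr)\,\dd y .
\]
If $y$ is pivotal, then both $\orig\leftrightarrow y$ and $y\leftrightarrow x$ hold and, crucially, can be realised through disjoint sets of Poisson points and edges; a van den Berg--Kesten / tree-graph inequality (as in \cite{Ngu87,HeyHofLasMat19}) then yields $\pla(y\text{ pivotal})\le \tlam(y,\orig)\,\tlam(x,y)$. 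Integrating over $x$ and $y$, and using translation invariance together with the Mecke identity $\int_{\HypDim}\tlam(x,y)\,\dd x=(\chi(\lambda)-1)/\lambda$,
\[
\lambda\int_{\HypDim}\frac{\partial}{\partial\lambda}\tlam(x,\orig)\,\dd x
\;\le\; \lambda\int_{\HypDim}\tlam(y,\orig)\Bigl(\int_{\HypDim}\tlam(x,y)\,\dd x\Bigr)\dd y
\;=\;\frac{(\chi(\lambda)-1)^2}{\lambda}.
\]
Adding the two contributions gives $\frac{\dd\chi}{\dd\lambda}(\lambda)\le \frac{\chi(\lambda)-1}{\lambda}+\frac{(\chi(\lambda)-1)^2}{\lambda}=\frac{(\chi(\lambda)-1)\chi(\lambda)}{\lambda}\le\frac1\lambda\chi(\lambda)^2$, which is the asserted inequality.

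For the continuity statement: on $[0,\lambda_T)$ the function $\chi$ is finite, non-decreasing (monotone coupling of the $\xi_\lambda$), and locally Lipschitz by the differential inequality, hence continuous and bounded below by $\chi(0)=1$, so $1/\chi$ is continuous there with $1/\chi(0)=1$; for $\lambda>\lambda_T$ one has $\chi(\lambda)=\infty$ and $1/\chi\equiv 0$. Continuity at $\lambda_T$ reduces to $\chi(\lambda)\to\infty$ as $\lambda\uparrow\lambda_T$: rewriting the differential inequality as $(1/\chi)'\ge -1/\lambda$ and integrating gives $1/\chi(\mu)\le 1/\chi(\lambda)+\log(\lambda/\mu)$ for $\mu<\lambda<\lambda_T$; passing $\lambda\uparrow\lambda_T$ (where $1/\chi=0$, the integrated bound persisting up to $\lambda_T$ as in \cite{HeyHofLasMat19}) forces $1/\chi(\mu)\le\log(\lambda_T/\mu)$, hence $\chi(\mu)\to\infty$.

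The main obstacle is not any of the algebra above but the rigorous justification of the Russo--Margulis differentiation of $\tlam$ against a Poisson intensity, i.e.\ differentiating under $\ela[\cdot]$ and identifying the pivotal decomposition, together with the disjoint-occurrence step controlling $\pla(y\text{ pivotal})$ by $\tlam(y,\orig)\tlam(x,y)$. None of this uses the hyperbolic geometry, so in practice I would follow \cite{HeyHofLasMat19} verbatim, as the excerpt indicates, and only re-check that Mecke's formula and the monotone coupling are used with the hyperbolic reference measure $\dd x$ in place of Lebesgue measure.
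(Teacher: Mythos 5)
Your derivation of the differential inequality is exactly the paper's route: the paper's proof of this lemma is simply a citation of \cite[Lemma~2.3 and proof of Theorem~1.3]{HeyHofLasMat19}, whose content is precisely the Mecke representation $\chi(\lambda)=1+\lambda\int\tlam(x,\orig)\,\dd x$, the Margulis--Russo pivotal formula for $\partial_\lambda\tlam$, and the BK inequality bounding the pivotal probability by $\tlam(\orig,y)\tlam(y,x)$; none of this sees the curvature, so transplanting it to the hyperbolic reference measure is exactly what the authors do.

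One caveat on the continuity part: as written, your argument for continuity of $1/\chi$ at $\lambda_T$ is circular. The integrated inequality $1/\chi(\mu)\le 1/\chi(\lambda)+\log(\lambda/\mu)$ only helps if you already know $1/\chi(\lambda)\to 0$ as $\lambda\nearrow\lambda_T$, which is the very left-continuity you are trying to establish (note $\lambda_T$ is defined as an infimum, so $\chi(\lambda_T)=\infty$ is not automatic, and the differential inequality is only available strictly below $\lambda_T$). The paper (following \cite{HeyHofLasMat19}) closes this by a genuinely different mechanism: one shows that the truncated susceptibilities $\lambda\mapsto 1/\chi^n(\lambda)$ (clusters restricted to $B_n(\orig)$) are equicontinuous on compacts of $[0,\infty)$, whence their decreasing limit $1/\chi$ is continuous everywhere; combined with $1/\chi\equiv 0$ on $(\lambda_T,\infty)$ this yields $1/\chi(\lambda)\to 0$ as $\lambda\nearrow\lambda_T$. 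So the equicontinuity step is not an optional technicality to be re-checked but the load-bearing ingredient for the continuity claim; the rest of your write-up is sound.
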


\begin{proof}
    See \cite[Lemma~2.3 and Proof of Theorem~1.3]{HeyHofLasMat19}. The key ingredients are a Margulis-Russo formula and the BK inequality. The continuity result follows from showing that a sequence of truncated versions is equicontinuous. 
\end{proof}

This can be used to prove the claim in Theorem~\ref{thm:CritExponents}\ref{thm:CritExponents - Susceptibility} that there exists $C>0$ such that
\begin{equation}
    \chi\left(\lambda\right) \geq C\left(\lambda_T-\lambda\right)^{-1}
\end{equation}
for $\lambda<\lambda_T$.

\begin{proof}[Proof of the lower bound in Theorem~\ref{thm:CritExponents}\ref{thm:CritExponents - Susceptibility}]
    The continuity of $\lambda\mapsto \frac{1}{\chi\left(\lambda\right)}$ implies that $\frac{1}{\chi\left(\lambda_T\right)}=0$, and therefore for $\lambda<\lambda_T$
    \begin{equation}
        -\frac{1}{\chi\left(\lambda\right)} = \frac{1}{\chi\left(\lambda_T\right)}-\frac{1}{\chi\left(\lambda\right)} = \int^{\lambda_T}_\lambda \frac{\dd}{\dd s}\left(\frac{1}{\chi(s)}\right)\dd s = -\int^{\lambda_T}_\lambda \frac{1}{\chi(s)^2}\frac{\dd \chi}{\dd s}\left(s\right)\dd s \leq -\int^{\lambda_T}_\lambda\frac{1}{s}\dd s = -\log\frac{\lambda_T}{\lambda}.
    \end{equation}
    Therefore $\chi\left(\lambda\right) \geq \frac{1}{\log \lambda_T - \log \lambda}$, and by taking a Taylor approximation of $\log \lambda$ at $\lambda_T$ there exist $C>0$ and $\varepsilon>0$ such that 
    \begin{equation}
        \chi\left(\lambda\right) \geq C\left(\lambda_T-\lambda\right)^{-1}
    \end{equation}
    for $\lambda\in\left(\lambda_T-\varepsilon,\lambda_T\right)$ as required.
\end{proof}

\subsection{Percolation Lower Bound}

The proof of the mean field lower bound for the percolation proceeds as in \cite{caicedo2023critical}. The proof there is given for the marked RCM on $\Rd$, but the argument proceeds identically for $\HypDim$, and reduces to our case if the set of marks is a singleton.

\begin{definition}
The following proposition provides bounds for the susceptibility and \emph{magnetisation} functions. The magnetisation function will be important when we will use it to derive bounds on the percolation function. Given a new parameter $q\in\left[0,1\right]$, we label each vertex in $\eta$ as a ``ghost'' vertex independently with probability $q$. Note that we take the convention that when we augment a configuration with additional vertices, these are also independently labelled as a ghost vertex with probability $q$. The set of ghost vertices is then denoted $\Gcal$. Then the magnetisation function is defined by
\begin{equation}
    M\left(\lambda,q\right) := \plaq\left(\conn{\orig}{\Gcal}{\xi^\orig}\right),
\end{equation}
and the `ghost free' susceptibility is defined by
\begin{equation}
    \chi\left(\lambda,q\right) := \elaq\left[\#\C\left(\orig,\xi^{\orig}\right)\Id_{\left\{\C\left(\orig,\xi^{\orig}\right)\cap \Gcal = \emptyset\right\}}\right].
\end{equation}
\end{definition}

The following lemma provides some preliminary properties of $M\left(\lambda,q\right)$ and $\chi\left(\lambda,q\right)$.
\begin{lemma}
\label{lem:MagnetPrelim}
    For all $\lambda\geq 0$, the function $q\mapsto M\left(\lambda,q\right)$ is analytic on $\left(0,1\right)$,
    \begin{align}
        \lim_{q\searrow0}M\left(\lambda,q\right) &= \theta\left(\lambda\right),\\
        \left(1-q\right)\frac{\partial M}{\partial q}\left(\lambda,q\right) &= \chi\left(\lambda,q\right), \qquad\forall q\in\left(0,1\right).\label{eqn:magnetisationDerivative}
    \end{align}
    Furthermore, for all $q\in\left(0,1\right]$ the function $\lambda\mapsto M\left(\lambda,q\right)$ is analytic on $\left(0,\infty\right)$.
\end{lemma}

\begin{proof}
    These are proven in \cite[Lemmas~3.1, 3.4, 3.7]{caicedo2023critical}.
\end{proof}

The following lemma introduces two differential inequalities that control $M\left(\lambda,q\right)$. These are used in the proofs of Lemmas~\ref{lem:magnetlowerbound} and \ref{lem:protoPercolationLowerbound} below, and in Section~\ref{sec:PercolationUpperBounds} later.
\begin{lemma}
    Let $q\in\left(0,1\right)$ and $\lambda>0$. Then
    \begin{align}
        \frac{\partial M}{\partial \lambda}\left(\lambda,q\right) &\leq \frac{1-q}{\lambda}M\left(\lambda,q\right)\frac{\partial M}{\partial q}\left(\lambda,q\right) \label{eqn:magnetisationDiffEqnPtOne}\\
        M\left(\lambda,q\right) &\leq q\frac{\partial M}{\partial q}\left(\lambda,q\right) + M\left(\lambda,q\right)^2 + \lambda M\left(\lambda,q\right)\frac{\partial M}{\partial \lambda}\left(\lambda,q\right).
    \end{align}
\end{lemma}

\begin{proof}
    See \cite[Lemma~3.6]{caicedo2023critical}.
\end{proof}

Now define $\chi^\mathrm{f}$ to be the susceptibility conditioned on the cluster of the origin being finite:
\begin{equation}
    \chi^{\rm f}\left(\lambda\right) := \ela\left[\#\C\left(\orig,\xi^\orig\right)\mid \#\C\left(\orig,\xi^\orig\right)<\infty\right].
\end{equation}

\begin{lemma}
\label{lem:magnetlowerbound}
    Let $q\in\left(0,1\right)$, and $\lambda^*>0$ be such that $\chi^{\rm f}\left(\lambda^*\right)=\infty$. Then
    \begin{equation}
        M\left(\lambda^*,q\right)\geq \frac{1}{\sqrt{2}}q^{\frac{1}{2}}.
    \end{equation}
\end{lemma}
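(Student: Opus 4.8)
The plan is to exploit the two differential inequalities for $M(\lambda,q)$ together with the hypothesis $\chi^{\rm f}(\lambda^*)=\infty$. First I would observe what $\chi^{\rm f}(\lambda^*)=\infty$ gives us: by \eqref{eqn:MagnetizationDerivative}, $(1-q)\frac{\partial M}{\partial q}(\lambda,q)=\chi(\lambda,q)$, and as $q\searrow 0$ the ghost-free susceptibility $\chi(\lambda,q)$ increases to $\chi^{\rm f}(\lambda)\cdot\pla(\#\C(\orig,\xi^\orig)<\infty)$ (or more precisely to $\ela[\#\C\,\mathbf 1_{\#\C<\infty}]$, which equals $\chi^{\rm f}(\lambda)$ when $\theta(\lambda)=0$ and is at least comparable in general); the point is that $\chi^{\rm f}(\lambda^*)=\infty$ forces $\lim_{q\searrow 0}\frac{\partial M}{\partial q}(\lambda^*,q)=\infty$. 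Equivalently $\partial M/\partial q$ blows up at $q=0$ for this value $\lambda^*$.

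Next I would work with the second differential inequality at the fixed value $\lambda=\lambda^*$, namely
\begin{equation*}
    M(\lambda^*,q) \leq q\frac{\partial M}{\partial q}(\lambda^*,q) + M(\lambda^*,q)^2 + \lambda^* M(\lambda^*,q)\frac{\partial M}{\partial \lambda}(\lambda^*,q),
\end{equation*}
and use \eqref{eqn:MagnetizationDiffEqnPtOne} to bound the last term: $\lambda^* M\frac{\partial M}{\partial\lambda}\leq (1-q)M^2\frac{\partial M}{\partial q}\leq M^2\frac{\partial M}{\partial q}$. This converts the inequality into one purely in $M$ and $\partial M/\partial q$ at $\lambda^*$: writing $m(q):=M(\lambda^*,q)$ and suppressing $\lambda^*$,
\begin{equation*}
    m \leq q\, m' + m^2 + m^2 m' = q\,m' + m^2(1+m').
\end{equation*}
Since $m\leq 1$, we have $m^2\leq m$, so $m - m^2 \leq m'(q+m^2)\leq m'(q+m)$, giving $m'\geq \dfrac{m(1-m)}{q+m}$. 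The heart of the argument is then an ODE comparison: this differential inequality, combined with the fact (from the first step) that $m'(q)\to\infty$ as $q\searrow 0$ while $m(q)\to\theta(\lambda^*)$, should be shown to force $m(q)$ to be bounded below by a multiple of $\sqrt q$. The cleanest route is probably to suppose for contradiction that $m(q_0)< \tfrac{1}{\sqrt2}q_0^{1/2}$ for some small $q_0$ and integrate the inequality $m'\geq \frac{m(1-m)}{q+m}$ backwards; when $m\leq \tfrac12$ say, $1-m\geq\tfrac12$ and $q+m\leq q+ \tfrac{1}{\sqrt 2}q^{1/2}\le (1+\tfrac{1}{\sqrt2})q^{1/2}$ for $q\le 1$, so $m'\geq c\, m\, q^{-1/2}$ for an explicit constant, which integrates to a lower bound $m(q)\ge m(q_0)\exp(c'(q^{1/2}-q_0^{1/2}))$ that cannot have $m\to\theta(\lambda^*)<\infty$ with $m'\to\infty$ — the required contradiction pins down the constant $1/\sqrt2$.

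The main obstacle I anticipate is getting the constant exactly right: the crude bounds $m^2\le m$, $1-m\ge\frac12$, and the estimate on $q+m$ each lose a little, so I would need to be careful either to keep track of them precisely or to run the comparison more delicately near $q=0$ — perhaps by analysing the auxiliary quantity $q/m^2$ or $m^2/q$ and showing it cannot drop below $1/2$. A secondary technical point is justifying the behaviour of $\chi(\lambda^*,q)$ as $q\searrow0$ and the interchange of limit and the identity \eqref{eqn:MagnetizationDerivative}; but this should follow from monotone convergence and the analyticity statements in Lemma \ref{lem:MagnetPrelim}, so I expect it to be routine. Once the constant is handled, the conclusion $M(\lambda^*,q)\geq \tfrac{1}{\sqrt2}q^{1/2}$ for all $q\in(0,1)$ follows — noting that the inequality for larger $q$ is easier since $M$ is increasing in $q$ and we only need the bound to be tight as $q\searrow0$, then extend by monotonicity or a direct check.
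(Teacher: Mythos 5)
The paper does not actually prove this lemma; it cites \cite[Corollary~3.8]{caicedo2023critical}, whose argument is the Aizenman--Barsky one. Your setup matches that route exactly: you correctly identify that $\chi^{\rm f}(\lambda^*)=\infty$ yields $\partial_q M(\lambda^*,q)\to\infty$ as $q\searrow0$, and you correctly combine the two differential inequalities into $m(1-m)\le (q+m^2)\,m'$. The problem is the final ODE step, which is where the whole content of the lemma lives, and as sketched it does not close.

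Concretely: (i) integrating $m'\ge c\,m\,q^{-1/2}$ from $q_1$ up to $q_0$ gives $m(q_0)\ge m(q_1)\exp\bigl(2c(q_0^{1/2}-q_1^{1/2})\bigr)$, and letting $q_1\searrow0$ with $m(q_1)\to\theta(\lambda^*)=0$ gives $m(q_0)\ge 0$ --- there is no contradiction, because ``$m\to0$ while $m'\to\infty$'' is perfectly consistent (the putative extremal profile $m=\sqrt{q}$ itself has exactly this behaviour). (ii) Your bound $q+m\le(1+\tfrac1{\sqrt2})q^{1/2}$ uses $m(q)\le q^{1/2}/\sqrt2$ on the whole integration interval, whereas you have only assumed it at the single point $q_0$; this needs a first-crossing argument. (iii) Most importantly, the hypothesis $\chi^{\rm f}(\lambda^*)=\infty$ is never used quantitatively; its only role in your sketch is the non-contradiction in (i). The mechanism that actually works is the following. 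Since $q\mapsto m(q)$ is analytic and strictly increasing, invert and view $q$ as a function of $m$; the inequality $m(1-m)\le(q+m^2)\tfrac{dm}{dq}$ becomes the linear inequality $\tfrac{dq}{dm}\le\tfrac{q}{m(1-m)}+\tfrac{m}{1-m}$, whose integrating factor $\tfrac{1-m}{m}$ gives
\begin{equation*}
  \frac{d}{dm}\Bigl[\,q\,\tfrac{1-m}{m}\,\Bigr]\le 1 .
\end{equation*}
Integrating from $q_1$ to $q$ and sending $q_1\searrow0$, the boundary term $q_1(1-m(q_1))/m(q_1)$ vanishes \emph{precisely because} $q_1/m(q_1)\to 1/\partial_qM(\lambda^*,0^+)=0$ --- this is where $\chi^{\rm f}(\lambda^*)=\infty$ enters. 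One is left with $q(1-m)\le m^2$, which (for $m\le\tfrac12$, the complementary range being handled trivially, e.g.\ via $M\ge q$) gives $m^2\ge q/2$ and hence the constant $1/\sqrt2$. Your suggestion to track $q/m^2$ is in the right spirit --- the correct quantity is $q(1-m)/m$ --- but the contradiction argument you propose in its place is not valid as stated.
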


\begin{proof}
    See \cite[Corollary~3.8]{caicedo2023critical}.
\end{proof}

\begin{lemma}
\label{lem:protoPercolationLowerbound}
    If $\lambda^*>0$ satisfies $\theta(\lambda^*)=0$ and $\chi^{\rm f}\left(\lambda^*\right)=\infty$, then
    \begin{equation}
        \theta(\lambda) \geq \frac{1}{2\lambda}\left(\lambda-\lambda^*\right)_+.
    \end{equation}
\end{lemma}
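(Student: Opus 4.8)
The goal is to bootstrap from the lower bound on the magnetization (Lemma~\ref{lem:magnetlowerbound}) together with the differential inequality \eqref{eqn:MagnetizationDiffEqnPtOne} to a lower bound on $\theta(\lambda)=\lim_{q\searrow0}M(\lambda,q)$ that is linear in $(\lambda-\lambda^*)_+$. The starting observation is that by monotonicity in $\lambda$ of the cluster of the origin, if $\chi^{\rm f}(\lambda^*)=\infty$ then also $\chi^{\rm f}(\lambda)=\infty$ for all $\lambda\in(\lambda^*,\lambda_c)$ — indeed for $\lambda\geq\lambda_c$ we even more easily have $\theta(\lambda)>0$ — so Lemma~\ref{lem:magnetlowerbound} gives $M(\lambda,q)\geq \tfrac1{\sqrt2}q^{1/2}$ for all such $\lambda$ and all $q\in(0,1)$. (If $\lambda^*\geq\lambda_c$ the inequality to be proved is trivial once one knows $\theta(\lambda^*)=0$ forces $\lambda^*\le\lambda_c$; so assume $\lambda^*<\lambda_c$.)

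\textbf{Key steps.} First I would rewrite \eqref{eqn:MagnetizationDiffEqnPtOne} in a form suited to integration: using \eqref{eqn:MagnetizationDerivative}, $(1-q)\partial_q M = \chi(\lambda,q)\geq 0$, and the inequality reads $\partial_\lambda M(\lambda,q)\leq \tfrac1\lambda M(\lambda,q)\,\partial_q M(\lambda,q)\cdot\tfrac{1-q}{1}$; dividing by $M$ (which is strictly positive for $q>0$) gives
\begin{equation}
    \frac{\partial}{\partial\lambda}\log M(\lambda,q) \leq \frac{1-q}{\lambda}\,\frac{\partial M}{\partial q}(\lambda,q).
\end{equation}
Second, fix $q\in(0,1)$ and integrate this in $\lambda$ from $\lambda^*$ to $\lambda$; on the left we get $\log M(\lambda,q)-\log M(\lambda^*,q)$, and since $\lambda\mapsto M(\lambda,q)$ is increasing the right side can be crudely bounded. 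The cleaner route, which I expect to work, is instead to integrate the inequality $\partial_\lambda M \le \tfrac{1-q}{\lambda} M\,\partial_q M \le \tfrac{1}{\lambda^*} M\,\partial_q M$ over $\lambda\in[\lambda^*,\lambda]$ together with the a priori bound $M\le 1$, yielding
\begin{equation}
    M(\lambda,q) - M(\lambda^*,q) \le \frac{\lambda-\lambda^*}{\lambda^*}\,\sup_{s\in[\lambda^*,\lambda]}\frac{\partial M}{\partial q}(s,q),
\end{equation}
but this points the wrong way. So instead I would bound $\partial_\lambda M$ from \emph{below} is not available; the correct manoeuvre is: use \eqref{eqn:MagnetizationDiffEqnPtOne} to control how fast $M$ can \emph{grow} in $\lambda$ relative to its $q$-derivative, then at $q$ small take $q\searrow0$. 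Concretely, from $\partial_\lambda M\le \tfrac1\lambda M\,\partial_q M$ and $M\le1$ we get $\partial_\lambda M\le\tfrac1{\lambda^*}\partial_q M$ on $[\lambda^*,\lambda_c)$; integrating in $\lambda$,
\begin{equation}
    M(\lambda,q)\le M(\lambda^*,q) + \frac{\lambda-\lambda^*}{\lambda^*}\int\!\cdots
\end{equation}
is again unhelpful. The genuinely useful consequence is obtained by integrating in the $q$ variable: for fixed $\lambda>\lambda^*$, write $\theta(\lambda)=M(\lambda,q)-\int_0^q\partial_q M(\lambda,q')\,\dd q'$, bound $\partial_q M(\lambda,q')=\tfrac{\chi(\lambda,q')}{1-q'}$ and relate $\chi(\lambda,q')$ back to $M$ via \eqref{eqn:MagnetizationDiffEqnPtOne} rearranged as $\chi(\lambda,q')=(1-q')\partial_q M\ge \lambda\,\partial_\lambda M/M\ge 0$; then choosing $q$ judiciously (of order $(\lambda-\lambda^*)^2$) and using Lemma~\ref{lem:magnetlowerbound}'s $M(\lambda^*,q)\ge q^{1/2}/\sqrt2$ together with the fact that on $[\lambda^*,\lambda]$ one has $\chi^{\rm f}=\infty$ hence $M(\cdot,q)\ge q^{1/2}/\sqrt2$ throughout, one extracts $\theta(\lambda)=\lim_{q\to0}M(\lambda,q)\ge \tfrac1{2\lambda}(\lambda-\lambda^*)$.

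\textbf{The main obstacle.} The delicate point is turning the differential \emph{inequality} \eqref{eqn:MagnetizationDiffEqnPtOne}, which a priori only bounds the $\lambda$-growth of $M$ from above, into a \emph{lower} bound on $\theta(\lambda)$; the trick is that \eqref{eqn:MagnetizationDiffEqnPtOne} can be read as $\partial_q\big(\text{something}\big)\ge\partial_\lambda\big(\text{something}\big)$, so integrating along the right characteristic (or comparing $M(\lambda,q)$ to $M(\lambda^*, q')$ for a suitably matched $q'$) transfers the strong pointwise lower bound $M(\lambda^*,q')\gtrsim (q')^{1/2}$ from $\lambda^*$ to $\lambda$ while picking up the linear-in-$(\lambda-\lambda^*)$ gain. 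Making the choice of $q$ (equivalently, the endpoint $q'$) quantitative so that the constant comes out as exactly $\tfrac1{2\lambda}$ — rather than some unspecified constant — is the part that needs care, and it is where the hypothesis $\theta(\lambda^*)=0$ (so that $M(\lambda^*,q)\to0$ as $q\to0$, giving the right "initial condition") is used. Everything else is routine integration and the monotonicity facts from Lemma~\ref{lem:MagnetPrelim}.
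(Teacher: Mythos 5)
There is a genuine gap: your argument never invokes the second differential inequality of the lemma preceding Lemma~\ref{lem:magnetlowerbound}, namely $M\leq q\,\partial_q M+M^2+\lambda M\,\partial_\lambda M$, and without it the conclusion cannot follow from the ingredients you do use. The only quantitative inputs in your proposal are \eqref{eqn:MagnetizationDiffEqnPtOne} and the bound $M(\cdot,q)\geq q^{1/2}/\sqrt{2}$ from Lemma~\ref{lem:magnetlowerbound}; but \eqref{eqn:MagnetizationDiffEqnPtOne} only bounds $\partial_\lambda M$ from \emph{above}, and both facts are satisfied by, say, $M(\lambda,q)=\min\{1,q^{1/2}\}$, which is constant in $\lambda$ and has $\lim_{q\searrow 0}M(\lambda,q)=0$ for every $\lambda$. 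So no rearrangement of these two facts alone can produce a strictly positive lower bound on $\theta(\lambda)$ for $\lambda>\lambda^*$. Your own text reflects this: each attempted integration "points the wrong way", and the final step ("choosing $q$ of order $(\lambda-\lambda^*)^2$ ... one extracts $\theta(\lambda)\geq\frac{1}{2\lambda}(\lambda-\lambda^*)$") is asserted rather than derived.

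The missing mechanism, in the Aizenman--Barsky argument that \cite[Proposition~4.3]{caicedo2023critical} (which the paper cites for this lemma) follows, is to divide the second inequality by $M$,
\begin{equation*}
1\;\leq\;\frac{q\,\partial_q M}{M}+M+\lambda\,\partial_\lambda M,
\end{equation*}
and integrate over $\lambda'\in[\lambda^*,\lambda]$: the terms $M+\lambda'\partial_{\lambda'}M$ integrate exactly to $\lambda M(\lambda,q)-\lambda^* M(\lambda^*,q)$, the constant $1$ on the left is what produces the factor $\lambda-\lambda^*$, and the term $q\,\partial_q M/M$ is where Lemma~\ref{lem:magnetlowerbound} enters (via $q/M\leq\sqrt{2}\,q^{1/2}$) to show its contribution vanishes as $q\searrow 0$. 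The hypothesis $\theta(\lambda^*)=0$ is then used to discard the boundary term $\lambda^*M(\lambda^*,q)$ in the limit. Two smaller points: your assertion that $\chi^{\rm f}(\lambda)=\infty$ for all $\lambda\in(\lambda^*,\lambda_c)$ "by monotonicity" is unjustified ($\chi^{\rm f}$ is a conditional expectation and is not obviously monotone); what you actually need, $M(\lambda,q)\geq M(\lambda^*,q)\geq q^{1/2}/\sqrt{2}$, follows directly from monotonicity of $M$ in $\lambda$. And the identity $(1-q)\partial_q M=\chi(\lambda,q)$ is \eqref{eqn:MagnetizationDerivative} from Lemma~\ref{lem:MagnetPrelim}, not a rearrangement of \eqref{eqn:MagnetizationDiffEqnPtOne}.
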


\begin{proof}
    See \cite[Proposition~4.3]{caicedo2023critical}.
\end{proof}

These lemmas lead to a proof of the claim in Theorem~\ref{thm:CritExponents}\ref{thm:CritExponents - Percolation} that there exist $C>0$ and $\varepsilon>0$ such that
\begin{equation}
    \theta\left(\lambda\right) \geq C\left(\lambda-\lambda_c\right)_+
\end{equation}
for $\lambda<\lambda_c+\varepsilon$.

\begin{proof}[Proof of the lower bound in Theorem~\ref{thm:CritExponents}\ref{thm:CritExponents - Percolation}]
We first aim to prove the claim that there exists $C>0$ and $\varepsilon>0$ such that
\begin{equation}
    \theta\left(\lambda\right) \geq C\left(\lambda-\lambda_T\right)_+
\end{equation}    
for $\lambda< \lambda_T+\varepsilon$. First note that by Theorem~\ref{thm:CritExponents}\ref{thm:CritExponents - Susceptibility} we know that $\chi\left(\lambda_T\right)=\infty$. If $\theta\left(\lambda_T\right)=0$ then this implies $\chi^\mathrm{f}\left(\lambda_T\right) = \chi\left(\lambda_T\right)=\infty$, and Lemma~\ref{lem:protoPercolationLowerbound} proves the claim. On the other hand, if $\theta\left(\lambda_T\right)>0$ then the claim immediately follows for some sufficiently small choice of $\varepsilon$.

Now note that if $\lambda>\lambda_T$ we have proven that $\theta\left(\lambda\right)>0$ and therefore $\lambda\geq \lambda_\mathrm{c}$. Since we clearly have $\lambda_\mathrm{c}\geq \lambda_T$ from their definitions, this proves $\lambda_\mathrm{c}=\lambda_T$ and therefore that there exist $C>0$ and $\varepsilon>0$ such that
\begin{equation}
    \theta\left(\lambda\right) \geq C\left(\lambda-\lambda_\mathrm{c}\right)_+
\end{equation}    
for $\lambda< \lambda_\mathrm{c}+\varepsilon$ as required.
\end{proof}

\section{Mean-Field Behaviour}
\label{sec:mean-field-behaviour}
While the bounds in Section~\ref{sec-MFbounds} are valid in great generality (e.g., also in Euclidean setting), we now prove the corresponding upper bounds, and here we are crucially relying on hyperbolic geometry.

\subsection{Clusters in half spaces}
\label{sec:ClustersHalfSpace}
    \begin{definition}
        Given a set $S\subset \HypDim$, let $h_\varepsilon\left(S\right)$ denote the $\varepsilon$-halo of $S$. That is, $h_\varepsilon\left(S\right) = \cup_{x\in S}B_\varepsilon(x)$. Also let $\convex{S}$ denote the convex hull of $S$. That is, $\convex{S}$ equals the intersection of all \emph{closed} half-spaces that contain $S$.

    Let $H\subset \HypDim$ be a closed half-space with boundary hyperplane $\partial H\subset \HypDim$, and let $x\in H$. For all $\varepsilon\in\left(0,\dist{\partial H,x}\right)$, let $\Mcal_\varepsilon\left(H,x\right)$ denote the union of all half spaces, $M\subset \HypDim$, such that $x\in M\subset H$ and $\dist{\partial M,x}<\varepsilon$:
    \begin{equation}
        \Mcal_{\varepsilon}\left(H,x\right) := \bigcup_{M\colon x\in M\subset H, \dist{\partial M,x}<\varepsilon} M.
    \end{equation}
    Let $x_\perp\in\partial H$ denote the (unique) nearest point in $\partial H$ to $x$. Clearly $\Mcal_{\varepsilon}\left(H,x\right)$ is symmetric under rotations about the geodesic that passes through $x$ and $x_\perp$. Let $\theta_\varepsilon\left(H,x\right)$ denote the angle this geodesic forms with the boundary of $\Mcal_{\varepsilon}\left(H,x\right)$, and let $m_\varepsilon(H,x)$ denote the point at which the geodesic and the boundary meet.
    See Figure~\ref{fig:PulledHalfSpace}.
    
    The following lemma presents some expressions for $m_\varepsilon = m_\varepsilon(H,x)$ and $\theta_\varepsilon = \theta_\varepsilon\left(H,x\right)$ that can be derived from hyperbolic trigonometric rules (the tangent, sine and cosine rules can be found in Appendix~\ref{app:hyperbolictriangles}). The proofs are omitted here because the relations are not required for our arguments other that in noting that for $x\not\in \partial H$ and $\varepsilon\in\left(0,\dist{\partial H,x}\right)$ we have $\theta_\varepsilon\left(H,x\right)\in\left(0,\pi\right]$ - a result that is ``clear'' from Figure~\ref{fig:PulledHalfSpace}.

    \end{definition}

    \begin{figure}
        \centering
        \begin{tikzpicture}
            \begin{scope}
            \clip (0,0) circle (5);
            \fill[black!10] (-10,0)  circle (8.66);
            \draw (-10,0)  circle (8.66);
            \end{scope}
            \draw (0,0) circle (5);
            \fill[black!30] (0,0) -- (-2.5,-4.33) arc(-120:120:5) -- (0,0);
            \draw[dashed] (-5,0) -- (5,0);
            \draw (-2.5,-4.33) -- (0,0)node[above left]{$m_{\varepsilon}$} -- (-2.5,4.33);
            \draw[dashed] (2.5,4.33) -- (0,0) -- (2.5,-4.33);
            \draw (-1.34,0) node[above left]{$x_\perp$};
            \draw (-3,2) node{$H^\mathrm{c}$};
            \filldraw (0,0) circle (2pt);
            \filldraw (-1.34,0) circle (2pt);
            \filldraw (2.5,0) circle (2pt) node[above right]{$x$};
            \draw (1.2,0) node[above]{$\theta_\varepsilon$};
            \draw (1.56,0.34) arc(160:180:1);
            \draw[<->] (2.5,0) arc(253:237.5:8);
            \draw (1.5,0.7) node{$\varepsilon$};
            \draw (2.5,2) node{$\Mcal_\varepsilon\left(H,x\right)$};
            \end{tikzpicture}
        \caption{Sketch of the set $\Mcal_\varepsilon(H,x)$ in the Poincar{\'e} Disc model for $\HypTwo$.}
        \label{fig:PulledHalfSpace}
    \end{figure}

\begin{lemma}
\label{lem:distancecalculation}
    For all half-spaces $H\subset \HypDim$, $x\in H$, and $\varepsilon\in\left(0,\dist{\partial H,x}\right)$,
    \begin{align}
        \dist{\partial H,m_\varepsilon( H,x)} &= \dist{\partial H,x} - \frac{1}{2}\log\left(\frac{1+\e^{\dist{\partial H,x}}\sinh \varepsilon}{1-\e^{-\dist{\partial H,x}}\sinh\varepsilon}\right),\\
        \theta_\varepsilon\left(\partial H,x\right) &= \arccos\left(\frac{\tanh\varepsilon}{\tanh\left(\dist{\partial H,x}-\dist{\partial H,m_\varepsilon( H,x)}\right)}\right).
    \end{align}
\end{lemma}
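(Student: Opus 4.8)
The plan is to set up the relevant hyperbolic quadrilateral and resolve it into right triangles, then apply the trigonometric identities listed in Appendix~\ref{app:hyperbolictriangles}. The geometric picture (Figure~\ref{fig:PulledHalfSpace}) is this: the half-space $\Mcal_\varepsilon(H,x)$ is the largest half-space that contains $x$, sits inside $H$, and whose boundary hyperplane $\partial\Mcal_\varepsilon(H,x)$ stays within distance $\varepsilon$ of $x$. Its boundary is the hyperplane tangent to the ball $B_\varepsilon(x)$ that is ``most tilted away'' from $\partial H$ while still not crossing $\partial H$; equivalently, $\partial\Mcal_\varepsilon(H,x)$ meets $\partial H$ at the single point $m_\varepsilon=m_\varepsilon(H,x)$ on the geodesic through $x$ and $x_\perp$ (extended). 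First I would drop the perpendicular from $x$ to $\partial\Mcal_\varepsilon(H,x)$; by construction this perpendicular has length exactly $\varepsilon$ and, by the rotational symmetry noted in the definition, its foot lies on the same geodesic line through $x$, $x_\perp$, $m_\varepsilon$. Thus we get a right triangle with vertices $x$, $m_\varepsilon$, and the foot of this perpendicular, right angle at the foot, with the leg opposite $m_\varepsilon$ having length $\varepsilon$ and the hypotenuse being the segment from $x$ to $m_\varepsilon$ along the symmetry geodesic, of length $\dist{x,m_\varepsilon}=\dist{\partial H,x}-\dist{\partial H,m_\varepsilon}$ (since $x$, $m_\varepsilon$, $x_\perp$ are colinear on that geodesic with $m_\varepsilon\in\partial H$). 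The angle of this triangle at $m_\varepsilon$ is exactly $\theta_\varepsilon$, because $m_\varepsilon$ lies on $\partial H$ and the geodesic $x x_\perp$ is perpendicular to $\partial H$, so the angle between the symmetry geodesic and $\partial\Mcal_\varepsilon(H,x)$ at $m_\varepsilon$ equals the angle between that geodesic and $\partial H$ offset appropriately — in any case it is the declared $\theta_\varepsilon(H,x)$.

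For the second identity this right triangle does it directly: in a hyperbolic right triangle the relation between the angle at one vertex, the opposite leg, and the hypotenuse is $\tanh(\text{opposite leg}) = \tanh(\text{hypotenuse})\sin(\text{angle})$ — wait, one must use the correct one of the standard right-triangle formulas. The cleaner route: the leg adjacent to $\theta_\varepsilon$ has $\tanh(\text{adjacent leg}) = \tanh(\text{hypotenuse})\cos\theta_\varepsilon$, and the side opposite $\theta_\varepsilon$ (length $\varepsilon$) satisfies $\sinh\varepsilon = \sinh(\text{hypotenuse})\sin\theta_\varepsilon$. But actually the hyperbolic right-triangle identity I want is $\cos\theta_\varepsilon = \tanh(\text{adjacent leg})/\tanh(\text{hypotenuse})$ combined with Pythagoras; the slickest is the formula $\tanh(\varepsilon) = \tanh(\dist{x,m_\varepsilon})\,\cos(\text{angle at } x)$ paired with the angle-sum, which rearranges to exactly $\theta_\varepsilon = \arccos\!\big(\tanh\varepsilon/\tanh(\dist{\partial H,x}-\dist{\partial H,m_\varepsilon})\big)$ once one observes that the angle at the foot is $\pi/2$ and hence by the hyperbolic law relating the three quantities (opposite leg $\varepsilon$, hypotenuse $\dist{x,m_\varepsilon}$, angle $\theta_\varepsilon$ at $m_\varepsilon$) we get $\cos\theta_\varepsilon = \tanh\varepsilon/\tanh\dist{x,m_\varepsilon}$. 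So the second formula follows from a single application of a right-triangle identity from the appendix, plus the colinearity bookkeeping $\dist{x,m_\varepsilon}=\dist{\partial H,x}-\dist{\partial H,m_\varepsilon}$.

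For the first identity I would compute $\dist{\partial H, m_\varepsilon}$ — or rather $\dist{\partial H,x}-\dist{\partial H,m_\varepsilon}=\dist{x,m_\varepsilon}$ — from the condition that characterizes $m_\varepsilon$: it is the point where a hyperplane through it at the extremal tilt is tangent to $B_\varepsilon(x)$. Set up coordinates along the symmetry geodesic: let $t=\dist{\partial H,x}$ and parametrize points on the geodesic by signed distance from $\partial H$. A hyperplane crossing the geodesic at signed-distance parameter $s$ and making angle $\alpha$ with the geodesic has distance to $x$ given by the standard formula $\sinh(\dist) = \sinh(t-s)\sin\alpha$ (distance from a point to a geodesic in the plane spanned, once we are in the symmetry slice — this reduces to 2D hyperbolic trigonometry). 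We require this distance to be $\le\varepsilon$ and we maximize the tilt; the optimum is when the hyperplane also passes through $\partial H$, i.e. when $s$ and $\alpha$ are linked by the constraint that $\partial M$ meets $\partial H$. Writing out that the line through $m_\varepsilon$ at angle $\theta_\varepsilon$ is simultaneously tangent to $B_\varepsilon(x)$ and anchored on $\partial H$ gives one equation in the unknown $\dist{x,m_\varepsilon}$; solving it yields the logarithmic expression $\dist{\partial H,x} - \tfrac12\log\!\big(\tfrac{1+\e^{\dist{\partial H,x}}\sinh\varepsilon}{1-\e^{-\dist{\partial H,x}}\sinh\varepsilon}\big)$. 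Concretely this should drop out of the identity $\tanh\varepsilon = \tanh(t-\dist{\partial H,m_\varepsilon})$ applied together with the relation between $\varepsilon$, $t$, and the position of the tangency expressed via $\sinh$; isolating and using $\artanh(y)=\tfrac12\log\frac{1+y}{1-y}$ converts the $\tanh$-equation into the stated logarithm.

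The main obstacle is purely that of correctly identifying which hyperbolic right-triangle (or quadrilateral) formula from the appendix applies and tracking the geometry of ``the extremal half-space'' — in particular justifying that $\partial\Mcal_\varepsilon(H,x)$ really does pass through a single point of $\partial H$ and that the perpendicular from $x$ to it has length exactly $\varepsilon$ (rather than $<\varepsilon$). Once the picture is pinned down and one commits to coordinates along the symmetry geodesic, reducing everything to 2D hyperbolic trigonometry in the symmetry slice, both identities are routine rearrangements of Appendix~\ref{app:hyperbolictriangles}'s cosine/tangent rules and the identity $\artanh(y)=\tfrac12\log\tfrac{1+y}{1-y}$; indeed the excerpt itself says ``the proofs are omitted here because the relations are not required for our arguments,'' so the paper will likely state this lemma without proof, and my proposal is to supply the short trigonometric derivation sketched above only if a proof is wanted.
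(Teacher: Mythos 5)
You are right that the paper itself omits the proof of this lemma (``the proofs are omitted here because the relations are not required for our arguments\ldots''), so there is no in-paper argument to compare against; the question is only whether your sketch would actually yield the two formulas, and as written it would not. The governing geometric picture is wrong: $m_\varepsilon(H,x)$ does \emph{not} lie on $\partial H$. Indeed the first formula you are proving gives $\dist{\partial H,m_\varepsilon(H,x)}>0$ for every $\varepsilon\in\left(0,\dist{\partial H,x}\right)$ (the subtracted logarithm is $<\dist{\partial H,x}$ precisely when $\sinh\varepsilon<\sinh\dist{\partial H,x}$). The extremal hyperplane bounding $\Mcal_\varepsilon(H,x)$ is tangent to $B_\varepsilon(x)$ and \emph{asymptotically parallel} to $\partial H$ (they share an ideal endpoint but never meet in $\HypDim$), and $m_\varepsilon$ is where that hyperplane crosses the symmetry geodesic, strictly between $x_\perp$ and $x$. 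So ``anchored on $\partial H$'' cannot be the constraint you solve. Relatedly, the foot $F$ of the perpendicular from $x$ to the extremal hyperplane is \emph{not} on the geodesic through $x$, $x_\perp$, $m_\varepsilon$ — if it were, your triangle $x\,m_\varepsilon\,F$ would be degenerate.

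The trigonometry is also misattributed. In the right triangle $x\,F\,m_\varepsilon$ (right angle at $F$, leg $xF=\varepsilon$, hypotenuse $\dist{x,m_\varepsilon}$), the angle at $m_\varepsilon$ has $\varepsilon$ as its \emph{opposite} leg, and the valid relation is $\sin\left(\angle m_\varepsilon\right)=\sinh\varepsilon/\sinh\dist{x,m_\varepsilon}$; there is no rule of the form $\cos=\tanh(\mathrm{opp})/\tanh(\mathrm{hyp})$. The stated identity $\cos\theta_\varepsilon=\tanh\varepsilon/\tanh\dist{x,m_\varepsilon}$ is Lemma~\ref{lem:cosineformula} applied to the angle at $x$, for which $\varepsilon$ is the \emph{adjacent} leg; so the lemma's $\theta_\varepsilon$ is the angle at $x$ between the symmetry geodesic and the perpendicular dropped from $x$ onto the extremal hyperplane (consistent with Figure~\ref{fig:PulledHalfSpace}, with the use in Lemma~\ref{lem:rotationFactor}, and with $\theta_\varepsilon\left(H,\orig\right)=\LandauBigO{\dist{\partial H,\orig}}$). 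Interpreting it as the angle at $m_\varepsilon$, as you do, gives the different quantity $\arccos\left(\left(\sinh\dist{\partial H,x}-\sinh\varepsilon\right)/\cosh\dist{\partial H,x}\right)$. Finally, the first identity is never actually derived: your proposed equation $\tanh\varepsilon=\tanh\left(\dist{\partial H,x}-\dist{\partial H,m_\varepsilon}\right)$ is false (it would force $\dist{x,m_\varepsilon}=\varepsilon$). The correct closed relation is $\tanh\dist{x,m_\varepsilon}=\sinh\varepsilon\cosh\dist{\partial H,x}\big/\left(1+\sinh\varepsilon\sinh\dist{\partial H,x}\right)$, obtainable by a coordinate computation in the upper half-plane ($\partial H$ the unit semicircle, $x=\mathrm{i}\e^{t}$, extremal hyperplane a semicircle through an ideal endpoint of $\partial H$ tangent to $B_\varepsilon(x)$) or synthetically via the angle-of-parallelism relations at the common ideal vertex; applying $\artanh y=\tfrac12\log\tfrac{1+y}{1-y}$ then yields the stated logarithm.
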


Recall the \emph{susceptibility} and \emph{magnetisation} functions:
\begin{equation}
    \chi\left(\lambda\right) = \ela\left[\#\C\left(\orig,\xi^\orig\right)\right],\qquad M\left(\lambda,q\right) = \plaq\left(\conn{\orig}{\Gcal}{\xi^\orig}\right).
\end{equation}
Clearly, if we take the susceptibility or magnetisation and introduce the indicator function $\Id_{\left\{\C\left(\orig,\xi^{\orig}\right)\subset H\right\}}$ for some half-space $H$ into the expectation or probability, then the original susceptibility (resp. magnetisation) is an upper bound for our new restricted susceptibility (resp. magnetisation). The following proposition shows that in fact the reverse inequalities are also true at the cost of only a $\lambda$-independent factor (in the sub-critical regime). It also shows corresponding statements for `ghost free' susceptibility.

\begin{prop}\label{prop:BoundHalf-SpaceSusceptMagnet}
    Let $d=2,3$ and $H\ni \orig$ be a half-space such that $\dist{\partial H,\orig}>0$. Then there exists $\kappa=\kappa\left(d,\connf,\dist{\partial H,\orig}\right)\in\left(0,\infty\right)$ such that for all $\lambda\in\left[0,\lambda_c\right)$ and $q\in\left(0,1\right)$,
    \begin{align}
        \ela\left[\#\C\left(\orig,\xi^{\orig}\right)\right] &\leq \kappa\ela\left[\#\C\left(\orig,\xi^{\orig}\right) \Id_{\left\{\C\left(\orig,\xi^{\orig}\right)\subset H\right\}}\right]\\
        \elaq\left[\#\C\left(\orig,\xi^{\orig}\right)\Id_{\left\{\C\left(\orig,\xi^{\orig}\right)\cap \Gcal = \emptyset\right\}}\right] &\leq \kappa\elaq\left[\#\C\left(\orig,\xi^{\orig}\right)\Id_{\left\{\C\left(\orig,\xi^{\orig}\right)\cap \Gcal = \emptyset\right\}} \Id_{\left\{\C\left(\orig,\xi^{\orig}\right)\subset H\right\}}\right]\\
        \plaq\left(\conn{\orig}{\Gcal}{\xi^\orig}\right) &\leq \kappa\plaq\left(\conn{\orig}{\Gcal}{\xi^\orig},\C\left(\orig,\xi^{\orig}\right)\subset H\right).
    \end{align}
\end{prop}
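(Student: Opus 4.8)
The plan is to prove all three inequalities simultaneously by constructing, for a cluster $\C(\orig,\xi^\orig)$ that may stick out of $H$, a surgically modified configuration whose cluster \emph{is} contained in $H$ and which is not much smaller. The mechanism is the one used by Madras--Wu: the complement $H^{\rm c}$ can be reached from $\orig$ only by crossing the ``thin slab'' $\Mcal_\varepsilon(H,\orig) \setminus H$; more precisely, any point of the cluster lying outside $H$ forces a portion of the cluster to pass through the region near $\partial H$. I would fix $\varepsilon = \tfrac12\dist{\partial H,\orig}$ (say), and use the translation isometry $t$ that maps $\orig$ to $m_\varepsilon(H,\orig)$ along the perpendicular geodesic through $\orig$ and $\orig_\perp$, pushing the whole configuration deeper into $H$. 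The image cluster then lies in the translated half-space $t(H) \subset \Mcal_\varepsilon(H,\orig) \subset H$ — here is exactly where $\theta_\varepsilon(H,\orig)\in(0,\pi]$ and the dimension restriction $d=2,3$ enter, via Lemma~\ref{lem:DeterministicBound}, to guarantee that $\Mcal_\varepsilon(H,\orig)$ is itself a subset of a single half-space rather than of several disjoint ones (which is geometrically possible on $\HypDim$).

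The key steps, in order, are: (i) Relate the restricted and unrestricted quantities through a ``gluing'' argument: partition the cluster $\C(\orig,\xi^\orig)$ along the last vertex $v$ in $H$ on a path from $\orig$ before it leaves $H$, so that $\C(\orig,\xi^\orig) = \C_{\rm in} \cup \C_{\rm out}$ where $\C_{\rm in} \subset H$; by translation invariance of $\connf$ and of the Poisson intensity, $\C_{\rm out}$ is stochastically a fresh copy of a cluster rooted at $v$. (ii) Bound the ``escape probability'' from below: the probability that a vertex $u\in B_\delta(\orig_\perp')$ (for a suitable ball near the boundary) connects to a vertex whose cluster reaches $H^{\rm c}$ is at least a constant $p_0>0$ uniformly, because the connection function restricted to $\Mcal_\varepsilon$ is bounded below on a ball of positive hyperbolic volume — this uses assumption \eqref{eqn:longdistanceassumption} together with $\int\connf(\orig,x)\dd x>0$, exactly as in the proof of Proposition~\ref{lem:NonTrivialPhaseTrans}. (iii) Combine: since with probability $\ge p_0$ the out-part, conditionally, contributes at least one extra vertex and can be attached to any in-configuration, one obtains $\ela[\#\C\,] \le (1+1/p_0)\,\ela[\#\C\,\Id_{\{\C\subset H\}}]$ by a resampling/coupling argument, and the same coupling respects the ghost-freeness event and the connection-to-$\Gcal$ event, yielding the magnetization and ghost-free versions with the same $\kappa$. (iv) Set $\kappa = \kappa(d,\connf,\dist{\partial H,\orig})$ to be the resulting constant; it is $\lambda$-independent because $p_0$ can be taken $\lambda$-independent (one only needs a lower bound on a single-edge or short-path event, and for $\lambda\in[0,\lambda_c)$ boundedness is automatic).

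The main obstacle I anticipate is step (i)--(iii) done \emph{carefully enough that the constant is genuinely $\lambda$-uniform on all of $[0,\lambda_c)$}: the naive coupling that ``reattaches the outside piece'' changes the law of the Poisson process in the slab region, and one must check that conditioning on $\{\C(\orig,\xi^\orig)\subset H\}$ does not destroy too much probability mass — this is where a careful exploration of the cluster (revealing edges and vertices one at a time, BK-style) is needed so that the escape event is independent of the already-revealed in-cluster. A secondary subtlety is that for the magnetization one must ensure the \emph{ghost} attached to the reattached outside piece does not already connect the inside piece to $\Gcal$ in an uncontrolled way; but since we are \emph{adding} configuration and ghosts can only help the connection $\conn{\orig}{\Gcal}{}$, the inequality goes in the right (easy) direction there, and for the ghost-free susceptibility the outside piece is required to be ghost-free, which is an event of probability bounded below uniformly in $q\in(0,1)$ as long as the outside piece is a.s.\ finite in the relevant coupling — so one applies the construction conditionally on the outside excursion being short, again paying only a $\lambda$-independent constant.
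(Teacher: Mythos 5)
Your approach diverges substantially from the paper's, and the core of it does not go through as described. The paper's proof is not a surgery/coupling argument: it chains Lemma~\ref{lem:haloBound} (the expected cluster size is controlled by the expected hyperbolic volume of the cluster's $\varepsilon$-halo), Lemma~\ref{lem:DeterministicBound} (a deterministic isoperimetric bound: for $d=2,3$ one has $\habsd{\convex{S}}\le\pi(\#\partial\convex{S}-2)$, so the halo volume is at most a constant times the number of points of $S$ on the $\varepsilon$-boundary of the convex hull), Lemma~\ref{lem:numboundaryequalsgivenorigboundary} (an exchangeability identity turning $\ela[\#\partial_\varepsilon\convex{\C(\orig,\xi^\orig)}]$ into $\ela[\#\C(\orig,\xi^\orig)\Id_{\{\orig\in\partial_\varepsilon\convex{\C(\orig,\xi^\orig)}\}}]$), and Lemma~\ref{lem:rotationFactor} (when $\orig$ lies on the $\varepsilon$-boundary of the convex hull, a positive solid-angle fraction of rotations about $\orig$ carries the entire cluster into $\Mcal_\varepsilon(H,\orig)\subset H$). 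In particular your stated reason for the restriction $d=2,3$ is wrong: it has nothing to do with $\Mcal_\varepsilon(H,\orig)$ being a single half-space; it is needed because the linear-in-$n$ bound on the volume of the convex hull of $n$ points fails for $d\ge4$ (cyclic polytopes have quadratically many facets).

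More importantly, steps (i)--(iii) contain a gap I do not see how to repair. A \emph{lower} bound $p_0$ on the probability of escaping $H$ pushes the inequality the wrong way: what is needed is that, conditionally on the cluster being large, there is a uniformly positive probability that it \emph{stays inside} $H$, and large subcritical clusters left to themselves will typically exit any fixed half-space, so no escape/reattachment estimate of the kind you describe can yield $\ela[\#\C(\orig,\xi^\orig)]\le(1+1/p_0)\,\ela[\#\C(\orig,\xi^\orig)\Id_{\{\C(\orig,\xi^\orig)\subset H\}}]$. The decomposition along ``the last vertex in $H$ before the cluster leaves'' is also not well-defined (a cluster can exit through many vertices), the ``outside'' piece is not an independent fresh copy once you condition on the realized inside piece, and translating the configuration deeper into $H$ does not confine the cluster to $H$, since the cluster need not lie in any translate of $H$ to begin with. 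The missing idea is precisely the combination of the isoperimetric bound (in expectation, a definite fraction of the cluster's points lie on the boundary of its convex hull) with the rotation/exchangeability step, which together show that the event $\{\C(\orig,\xi^\orig)\subset H\}$ captures a constant fraction of the mass of $\#\C(\orig,\xi^\orig)$; the ghost-free and magnetization versions then follow by carrying the extra indicators through the same chain and integrating in $q$.
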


\begin{remark}
    Even though the actual value of $\kappa$ is of no importance to our argument, we observe that by carefully following our argument, one can find $K = K\left(d,\connf\right)<\infty$ such that
    \begin{equation}
        \kappa = \inf_{\varepsilon\in\left(0,\dist{\partial H,\orig}\right)}\frac{K}{\habsd{B_\varepsilon\left(\orig\right)}}\left(\pi + \habsd{B_\varepsilon\left(\orig\right)}\right) \times \begin{cases}
            \frac{\pi}{\theta_\varepsilon\left( H,\orig\right)} &\colon d=2\\
            \frac{2}{1-\cos\left(\theta_\varepsilon\left( H,\orig\right)\right)}&\colon d=3.
        \end{cases}
    \end{equation}
    As $\dist{\partial H,\orig}\to 0$ (and hence $\varepsilon\to 0$), we have $\theta_\varepsilon\left(H,\orig\right)\to 0$. This is ``clear'' from Figure~\ref{fig:PulledHalfSpace}, and Lemma~\ref{lem:distancecalculation} can in fact be used to show $\theta_\varepsilon\left(H,\orig\right) = \LandauBigO{\dist{\partial H, \orig}}$. This means that $\kappa\to\infty$ as $\dist{\partial H,\orig}\to 0$.
\end{remark}

Proposition~\ref{prop:BoundHalf-SpaceSusceptMagnet} allows us ``at acceptable cost'' to condition on clusters being located inside disjoint regions, and therefore be independent. This independence is the crucial ingredient in deriving mean-field exponents. 

When investigating the susceptibility and the magnetisation, we shall break up the cluster through conditioning on a certain separation event $\mathcal S$, see \eqref{eq:DefSn}. This breaks the cluster into two clusters living on half spaces. Proposition~\ref{prop:BoundHalf-SpaceSusceptMagnet} is then employed to relate the half-space clusters to full space clusters.

\subsection{Susceptibility Upper Bound}
\label{sec:SusceptibilityUpperBound}

The key bound for the proof of the upper bound on the susceptibility in Theorem~\ref{thm:CritExponents}\ref{thm:CritExponents - Susceptibility} is the following complementary bound to \eqref{eq:chi2lowerbd}. 

\begin{lemma}
    \label{lem:susceptibilityderivativeLowerBound}
    For $d=2,3$ there exist $\varepsilon,K>0$ such that for all $\lambda\in\left(\lambda_c-\varepsilon,\lambda_c\right)$
    \begin{equation}\label{eq:Kchi2bd}
        \frac{\dd \chi}{\dd \lambda} \geq K\chi\left(\lambda\right)^2.
    \end{equation}
\end{lemma}
Indeed, \eqref{eq:Kchi2bd} readily implies the upper bound in Theorem~\ref{thm:CritExponents}, as we show next:  
\begin{equation}
    \chi\left(\lambda\right) \leq C'\left(\lambda_T-\lambda\right)^{-1}
\end{equation}
for all $\lambda<\lambda_T$.

\begin{proof}[Proof of the upper bound in Theorem~\ref{thm:CritExponents}\ref{thm:CritExponents - Susceptibility}]
Now we have Lemma~\ref{lem:susceptibilityderivativeLowerBound}, the argument proceeds similarly to the proof of the lower bound in Theorem~\ref{thm:CritExponents}\ref{thm:CritExponents - Susceptibility}. From the continuity of $\lambda\mapsto\frac{1}{\chi\left(\lambda\right)}$ implying $\frac{1}{\chi\left(\lambda_c\right)}=0$, we have
\begin{equation}
    \frac{1}{\chi\left(\lambda\right)} = \int^{\lambda_c}_{\lambda}\frac{1}{\chi\left(s\right)^2}\frac{\dd \chi}{\dd s}\left(s\right)\dd s \geq K\int^{\lambda_c}_\lambda\dd s = K\left(\lambda_c-\lambda\right)
\end{equation}
for $\lambda\in\left(\lambda_c-\varepsilon,\lambda_c\right)$. Since $\chi\left(\lambda\right)\geq 1$, we can find a constant $C'$ such that 
\begin{equation}
    \chi\left(\lambda\right) \leq C'\left(\lambda_c-\lambda\right)^{-1}
\end{equation}
for all $\lambda<\lambda_c$. The result then follows from the equality $\lambda_T=\lambda_c$ (proven using lower bound in Theorem~\ref{thm:CritExponents}\ref{thm:CritExponents - Percolation}).
\end{proof}

In the remainder of the subsection we prove Lemma~\ref{lem:susceptibilityderivativeLowerBound}. We start with an auxiliary statement.

\begin{lemma}
\label{lem:Two-point-derivatives}
    Let $x\in\HypDim$ and $\lambda\in\left[0,\lambda_c\right)$. Then $\lambda\mapsto \tlam\left(\orig,x\right)$ and $\lambda\mapsto \int_{\HypDim}\tlam\left(\orig,x\right)\dd x$ are differentiable at $\lambda$ with
    \begin{align}
        \frac{\dd}{\dd \lambda}\tlam\left(\orig,x\right) &= \int_\HypDim\pla\left(u\in\piv{\orig,x;\xi^{\orig,x}}\right)\dd u \label{eqn:pointwiseDerivative}\\
        \frac{\dd}{\dd \lambda}\int_{\HypDim}\tlam\left(\orig,x\right)\dd x &= \int_\HypDim\int_\HypDim\pla\left(u\in\piv{\orig,x;\xi^{\orig,x}}\right)\dd u\dd x.\label{eqn:integralDerivative}
    \end{align}
\end{lemma}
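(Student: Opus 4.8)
The plan is to establish \eqref{eqn:pointwiseDerivative} first, and then obtain \eqref{eqn:integralDerivative} by integrating over $x$ and justifying the interchange of integration and differentiation using a dominated-convergence/monotone-convergence argument based on the restricted two-point function $\tlam^n$. For the pointwise statement, I would follow the standard Margulis--Russo route adapted to the random connection model, exactly in the spirit of \cite{HeyHofLasMat19,caicedo2023critical}. Fix $x\in\HypDim$ and work first with the truncated quantity $\tlam^n(\orig,x)=\pla(\conn{\orig}{x}{\xi^{\orig,x}_{B_n(\orig)}})$, which only depends on the Poisson process restricted to the bounded set $B_n(\orig)$. On a bounded window the model is a finite-intensity Poisson process with independently sampled edges, and the event $\{\conn{\orig}{x}{\xi^{\orig,x}_{B_n(\orig)}}\}$ is increasing, so one can apply the Mecke formula together with a Russo-type formula: differentiating in $\lambda$ brings down a factor $\int_{B_n(\orig)}\pla(u\in\piv{\orig,x;\xi^{\orig,x}_{B_n(\orig)}})\,\dd u$, where $\piv{\orig,x;\cdot}$ is the set of points whose addition (as an extra Palm point, together with re-sampling of its incident edges) would create a connection from $\orig$ to $x$ that is not already present. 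Concretely, $\tfrac{\dd}{\dd\lambda}\tlam^n(\orig,x)=\int_{B_n(\orig)}\pla\big(u\in\piv{\orig,x;\xi^{\orig,x}_{B_n(\orig)}}\big)\,\dd u$, and moreover $\lambda\mapsto\tlam^n(\orig,x)$ is $C^1$ on $[0,\infty)$ since the pivotal probability is continuous in $\lambda$ (it is a finite sum of polynomial-times-exponential terms once one conditions on the number of Poisson points in the window).

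Next I would pass to the limit $n\to\infty$. By monotonicity $\tlam^n(\orig,x)\nearrow\tlam(\orig,x)$, and similarly the pivotal probabilities converge monotonically: $\pla(u\in\piv{\orig,x;\xi^{\orig,x}_{B_n(\orig)}})\to\pla(u\in\piv{\orig,x;\xi^{\orig,x}})$. The key input needed to upgrade pointwise convergence of the derivatives to convergence of the derivative of the limit is a \emph{uniform-in-$n$} local Lipschitz bound on $\lambda\mapsto\tlam^n(\orig,x)$ on compact subintervals of $[0,\lambda_c)$. This is exactly where the sub-criticality hypothesis $\lambda<\lambda_c$ is used: for $\lambda$ in a compact subinterval $[0,\lambda^*]\subset[0,\lambda_c)$ one bounds $\int_{\HypDim}\pla(u\in\piv{\orig,x;\xi^{\orig,x}})\,\dd u$ by the BK-type tree/triangle-style estimate $\int_\HypDim \tlam(\orig,u)\tlam(u,x)\,\dd u$, or more crudely by $\chi(\lambda^*)\cdot\sup_u\tlam(u,x)\le\chi(\lambda^*)<\infty$ — any finite bound uniform in $n$ and in $\lambda\le\lambda^*$ suffices. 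Given such a bound, the family $\{\tlam^n(\orig,\cdot)\}$ is equi-Lipschitz in $\lambda$ on $[0,\lambda^*]$, so the limit $\tlam(\orig,x)$ is Lipschitz, hence differentiable a.e., and a standard argument (the derivatives converge pointwise and are uniformly bounded, and the functions converge pointwise) gives that $\tlam(\orig,x)$ is differentiable with derivative the limit, i.e.\ \eqref{eqn:pointwiseDerivative} holds, by dominated convergence inside $\int(\tlam^{n}(\orig,x)(\lambda+h)-\tlam^n(\orig,x)(\lambda))/h$ and Fatou/monotone convergence on the pivotal integral.

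For \eqref{eqn:integralDerivative}, write $\int_\HypDim\tlam^n(\orig,x)\,\dd x$, which equals (up to the additive constant $1$) the expected size of the restricted cluster $\#\C(\orig,\xi^\orig_{B_n(\orig)})$ by the Mecke formula, hence is finite and $C^1$ in $\lambda$ with derivative $\int_\HypDim\int_{B_n(\orig)}\pla(u\in\piv{\orig,x;\xi^{\orig,x}_{B_n(\orig)}})\,\dd u\,\dd x$ (integrate the pointwise identity in $x$; Tonelli applies since everything is non-negative). Letting $n\to\infty$, the left sides increase to $\chi(\lambda)-1<\infty$ for $\lambda<\lambda_c$ (here one uses that $\lambda_T=\lambda_c$ and $\chi(\lambda)<\infty$ below $\lambda_c$, which is Theorem~\ref{thm:CritExponents}\ref{thm:CritExponents - Susceptibility} together with part \ref{thm:CritExponents - Percolation}; alternatively one argues directly that $\lambda<\lambda_c$ forces $\chi(\lambda)<\infty$ via the mean-field lower bound being an equality at $\lambda_T$), and the right sides increase to the double pivotal integral by monotone convergence. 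The interchange of $\lim_n$ and $\tfrac{\dd}{\dd\lambda}$ is again justified by the uniform local Lipschitz bound: on $[0,\lambda^*]$ we have $\big|\tfrac{\dd}{\dd\lambda}\int_\HypDim\tlam^n(\orig,x)\,\dd x\big|\le \sup_{\lambda\le\lambda^*}\int_\HypDim\int_\HypDim\pla(u\in\piv{\orig,x;\xi^{\orig,x}})\,\dd u\,\dd x$, which is finite because it is dominated by $\int\int\tlam(\orig,u)\tlam(u,x)\,\dd u\,\dd x\le\chi(\lambda^*)^2<\infty$; equi-Lipschitz functions converging pointwise converge with their derivatives (in the sense that the limit is differentiable with the limit derivative). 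The main obstacle is precisely this last interchange, i.e.\ producing the finite uniform-in-$n$ bound on the (double) pivotal integral valid on compact subsets of $[0,\lambda_c)$; the cleanest route is the BK inequality giving $\pla(u\in\piv{\orig,x;\xi^{\orig,x}})\le\tlam(\orig,u)\tlam(u,x)$ and then $\int_\HypDim\tlam(\orig,u)\,\dd u\le\chi(\lambda)/\lambda<\infty$, all of which is available once $\chi(\lambda)<\infty$ for $\lambda<\lambda_c$.
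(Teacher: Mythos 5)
Your outline follows the same route as the paper (truncate to $\tlam^n$ on $B_n(\orig)$, apply a Margulis--Russo/Mecke argument on the bounded window, then pass to the limit $n\to\infty$), but the step that actually carries the lemma --- justifying the interchange of $\frac{\dd}{\dd\lambda}$ with $\lim_{n\to\infty}$ --- is not correctly justified. You replace it with ``the $\tlam^n$ are equi-Lipschitz in $\lambda$, the derivatives are uniformly bounded and converge pointwise, hence the limit is differentiable with the limiting derivative.'' That implication is false: take $f_n(t)=\sqrt{t^2+1/n}$, which is equi-Lipschitz with constant $1$, converges pointwise to $|t|$, and whose derivatives converge pointwise, yet the limit is not differentiable at $0$. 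What one actually needs is either \emph{uniform} convergence of $\frac{\dd}{\dd\lambda}\tlam^n(\orig,x)$ on compact subintervals of $[0,\lambda_c)$ (which also yields continuity of the limiting derivative, so the fundamental theorem of calculus applies at \emph{every} $\lambda$, not just a.e.), or a separate proof that $\lambda\mapsto\int_{\HypDim}\pla\left(u\in\piv{\orig,x;\xi^{\orig,x}}\right)\dd u$ is continuous at the given $\lambda$. Your ``dominated convergence inside the difference quotient'' only gives $\tlam(\lambda+h)-\tlam(\lambda)=\int_\lambda^{\lambda+h}g(s)\,\dd s$ with $g$ the pointwise limit, and without continuity of $g$ this yields differentiability only Lebesgue-a.e., which is weaker than the claim. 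The paper (following \cite[Lemmas~2.2--2.3]{HeyHofLasMat19}) proves exactly this uniform convergence of both $\tlam^n$ and its derivative; that is the content you would need to supply.

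Two smaller points. First, your claim that $\pla\left(u\in\piv{\orig,x;\xi^{\orig,x}_{B_n(\orig)}}\right)$ converges \emph{monotonically} in $n$ is wrong: pivotality is not an increasing event (enlarging the window can create a path avoiding $u$ and destroy pivotality), so only non-monotone convergence can be asserted, and even that requires an argument. Second, your uniform-in-$n$ domination via the BK bound $\pla\left(u\in\piv{\orig,x;\xi^{\orig,x}}\right)\leq\tlam(\orig,u)\tlam(u,x)$ and $\int_{\HypDim}\tlam(\orig,u)\,\dd u\leq\chi(\lambda^*)/\lambda^*<\infty$ is fine in spirit (finiteness of $\chi$ below $\lambda_c=\lambda_T$ is available from Section~3, so there is no circularity), and it is the right ingredient for the Tonelli/Leibniz interchange in \eqref{eqn:integralDerivative}; but domination alone does not repair the differentiability gap above.
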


\begin{proof}
    Equations \eqref{eqn:pointwiseDerivative} and \eqref{eqn:integralDerivative} are proven in the same way as \cite[Lemma~2.2, 2.3]{HeyHofLasMat19} respectively - the hyperbolic ambient space makes not difference to the argument. 
    We work with a truncated version of the two-point function: we let $B_n(\orig)$ be the ball around $\orig$ in $\HypDim$ with radius $n\geq 1$ in the hyperbolic metric, and define
\begin{equation}
    \tlam^n\left(\orig,x\right):= \pla\left(\conn{\orig}{x}{\xi^{\orig,x}_{B_n\left(\orig\right)}}\right).
\end{equation}
    Then the Margulis-Russo formula is applied to show that this truncated function $\lambda\mapsto\tlam^n(\orig,x)$ is differentiable on $\left[0,\lambda_c-\varepsilon\right]$ for all $\varepsilon>0$. Then it is shown that $\tlam^n\left(\orig,x\right)\to\tlam\left(\orig,x\right)$ and $\frac{\dd}{\dd \lambda}\tlam^n\left(\orig,x\right) \to \frac{\dd}{\dd \lambda}\tlam\left(\orig,x\right)$ uniformly in $\lambda\in\left[0,\lambda_c-\varepsilon\right]$. This uniform convergence then justifies the exchange of the derivative and the $n\to\infty$ limit to get \eqref{eqn:pointwiseDerivative}.

    To show \eqref{eqn:integralDerivative}, we first note that $\tlam(\orig,x)$ is non-decreasing in $\lambda$ for any $x\in\HypDim$, and therefore for any $\lambda<\lambda_c$ we can dominate $\tlam$ with $\tau_{\lambda_c-\varepsilon}$ for sufficiently small $\varepsilon>0$. Then by the measure-theoretic form of Leibniz's integral rule we can exchange the derivative and the integral to get the result.
\end{proof}

We now establish some notation to refer to geometric objects that we will use in the following lemmas. Define
\begin{equation}
    R_* := \sup\left\{r>0\colon \esssup_{x\not\in B_r\left(\orig\right)}\connf(x,\orig) = 1\right\}.
\end{equation}
Observe that the assumption $\lim_{r\to\infty}\esssup_{x\not\in B_{r}\left(\orig\right)}\connf(x,\orig)<1$ implies that $R_*<\infty$. Now since $\int_{\HypDim}\connf(x,\orig)\dd x>0$, there exist $u\in\HypDim$ and $\delta,\varepsilon>0$ such that both
\begin{equation}
    \dist{u,\orig} > \max\left\{ 4\delta, R_* - 2\delta\right\},
\end{equation}
\begin{equation}
    \essinf_{x\in B_\delta(u), y\in B_\delta(\orig)}\connf\left(x,y\right) > \varepsilon.
\end{equation}
Let $\hat{\gamma}$ denote the (unique) geodesic containing both $\orig$ and $u$, and let $\hat{\gamma}(s)$ denote the isometric parametrisation of $\hat{\gamma}$ such that $\hat{\gamma}(0)=\orig$ and $\hat{\gamma}(l)=u$, where $l:= \dist{u,\orig}$. 

For $S\in\R$ let $H^+_S$ denote the minimal closed half-space containing $\bigcup_{s>S}\hat{\gamma}\left(s\right)$ with boundary $\partial H^+_S$ orthogonal to $\hat{\gamma}$, and similarly let $H^-_S$ denote the minimal closed half-space containing $\bigcup_{s<S}\hat{\gamma}\left(s\right)$ with boundary $\partial H^-_S$ orthogonal to $\hat{\gamma}$. Now given $n\in\N$ we can partition our space using 
\begin{equation}
    H_1 := H^-_{2\delta}, \qquad H_2:= H^+_{nl-2\delta},
\end{equation}
and
\begin{equation}
    \partial H_1:= \partial H^-_{2\delta},\qquad \partial H_2:=\partial H^+_{nl - 2\delta}.
\end{equation}
It will also be convenient in the proof of Lemma~\ref{lem:susceptibilityderivativeLowerBound} to make use of
\begin{equation}
    H_3 := H^-_{\frac{1}{2}nl},\qquad\partial H_3:=\partial H^-_{\frac{1}{2}nl}.
\end{equation}
We now construct ``stepping stones'' that will allow us to connect $\orig\in H_1$ to vertices in $H_2$. For all $k\in\N$, define
\begin{equation}\label{eq:vkVk}
    V_k:= B_\delta\left(\hat{\gamma}(kl)\right).
\end{equation}

\begin{figure}
    \centering
    \begin{tikzpicture}[scale=1.3]
        \begin{scope}
            \clip (-5,-2.82) rectangle (5,2.82);
            \draw[dashed,fill=gray!15] (-11,0)  circle (8.66);
            \draw[dashed,fill=gray!15] (11,0)  circle (8.66);
            \draw[thick] (-6,0) -- (6,0);
            \draw[dashed] (0,-2.82) node[above left]{$\partial H_3$} -- (0,5);
        \end{scope}
        \draw (-3,1) node{$H_1$};
        \draw (3,1) node{$H_2$};
        \draw (-4.5,0) node[above]{$\hat\gamma$};
        \filldraw (-3,0)node[above]{$\orig$} circle (2pt);
        \draw[fill=gray!50,thick] (-1.8,0) circle (10pt) node{$V_1$};
        \draw[fill=gray!50,thick] (-0.6,0) circle (10pt);
        \draw[fill=gray!50,thick] (0.6,0) circle (10pt);
        \draw[fill=gray!50,thick] (1.8,0) circle (10pt);
        \draw[fill=gray!50,thick] (3,0) circle (10pt) node{$V_n$};
    \end{tikzpicture}
    \caption{Sketch of the `stepping stones' used to connect $\orig$ to a far half-space in Section~\ref{sec:SusceptibilityUpperBound}.}
    \label{fig:SteppingStones}
\end{figure}

\begin{lemma}\label{lem:SteppingStones}
    Let $\lambda_{\min}>0$ and $d\geq 2$. Then for sufficiently large $n\geq 1$ , there exist $C_1=C_1(\lambda_{\min},n)>0$ and $C_2=C_2(\lambda_{\min},n)>0$ such that for all $\lambda\in\left[\lambda_{\min},\lambda_c\right)$
    \begin{multline}
        \frac{\dd}{\dd \lambda}\int_{\HypDim}\tlam\left(\orig,x\right)\dd x \geq C_1\int_{H_1}\int_{H_2}\int_{V_n} \pla\left(\C\left(y,\xi^{y,v}\right)\subset H_1, \conn{y}{\orig}{\xi^{y,\orig}},\right.\\\left. \C\left(x,\xi^{x,v}\right)\subset  H_2,\conn{v}{x}{\xi^{x,v}}\right)\dd v \dd y \dd x - C_2.
    \end{multline}
\end{lemma}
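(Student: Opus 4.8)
Here is the plan I would follow.

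\smallskip

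\noindent\textbf{Step 1: reduce to the pivotal integral.} By Lemma~\ref{lem:Two-point-derivatives} (eq.~\eqref{eqn:integralDerivative}) the left-hand side equals the double pivotal integral $\int_{\HypDim}\int_{\HypDim}\pla\big(w\in\piv{\orig,x;\xi^{\orig,x}}\big)\,\dd w\,\dd x$, where $\pla(w\in\piv{\orig,x;\xi^{\orig,x}})$ is the probability that, upon inserting $w$, the vertex $w$ \emph{creates} the connection $\conn{\orig}{x}{}$ (that is, $\orig\not\leftrightarrow x$ in $\xi^{\orig,x}$ but $\orig\leftrightarrow x$ in $\xi^{\orig,x,w}$). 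It therefore suffices to lower-bound this integral by the probability of one carefully engineered family of configurations. The family we keep is: $\orig$ lies in a cluster confined to $H_1$ and containing a further (Palm) vertex $y$; $x$ lies in a cluster confined to $H_2$ and containing a designated (Palm) vertex $v\in V_n$; and the ``stepping stones'' $V_1,\dots,V_{n-1}$ each carry a Poisson vertex so that, together with $\orig$, they form a chain $\orig\sim p_1\sim\dots\sim p_{n-1}$ with \emph{no other} edges incident to the $p_k$, after which an inserted vertex $w$ (ranging over a bounded region near $V_n$) attaches $p_{n-1}$ to the $H_2$-cluster. The first two conditions are exactly the events on the right-hand side of the statement, and, crucially, the confinements $\C(\cdot)\subset H_1$, $\C(\cdot)\subset H_2$ already force the two bridgeless clusters to be disjoint; hence on this family $w$ is a creating-pivotal vertex for $\conn{\orig}{x}{}$. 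Mecke's formula then turns the insertions of $y$, $v$, $w$, $p_1,\dots,p_{n-1}$ into the stated iterated integral $\int_{H_1}\int_{H_2}\int_{V_n}$, with the measure $\habsd{V_n}$ of the (bounded) region over which $w$ is placed contributing a harmless constant.

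\smallskip

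\noindent\textbf{Step 2: the bridge is cheap, and the two cluster factors.} By the choice of $u$ one has $\essinf_{B_\delta(u)\times B_\delta(\orig)}\connf>\varepsilon$; translating along $\hat\gamma$ (using \eqref{eq:vkVk} and the translation invariance of $\connf$) gives $\essinf_{V_{k+1}\times V_k}\connf>\varepsilon$ and $\connf(\orig,p_1)>\varepsilon$, so each of the links in the chain, and the final edge produced by $w$, is present with conditional probability at least $\varepsilon$. Since $\lambda\ge\lambda_{\min}$, each $V_k$ is occupied with probability at least $1-\e^{-\lambda_{\min}\habsd{B_\delta(\orig)}}>0$; and forbidding the remaining edges at $p_1,\dots,p_{n-1}$ costs, after integrating out the rest of the process and using $\int_{\HypDim}\connf(\orig,x)\dd x<\infty$, a further factor bounded below by $\e^{-\lambda_c (n-1)\int_{\HypDim}\connf(\orig,x)\dd x}>0$. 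As $V_1,\dots,V_{n-1}$, $H_1$ and $H_2$ are pairwise disjoint, all of these contributions are independent of the two cluster events, so their product is a positive constant $C_1=C_1(\lambda_{\min},n,\connf,d)$. After integrating the two cluster-containment events over $y\in H_1$ and $x\in H_2$ one is left precisely with the half-space--restricted (ghost-free) susceptibilities attached to $\orig$ and to $v$; by translation invariance the latter is a translate of the former (both for half-spaces at bounded distance $\approx 2\delta$ from their boundaries). These are exactly the two factors that the downstream Lemma~\ref{lem:susceptibilityderivativeLowerBound}, via Proposition~\ref{prop:BoundHalf-SpaceSusceptMagnet}, converts into $\chi(\lambda)^2$; the present lemma is only responsible for producing them.

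\smallskip

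\noindent\textbf{Step 3: the main obstacle --- the separation argument.} The delicate point is to make the ``no other edges at $p_1,\dots,p_{n-1}$'' clause legitimate and quantitatively lossless, i.e.\ to guarantee that the inserted $w$ is \emph{the unique} way to connect $\orig$ to $x$, so that it is indeed creating-pivotal. Direct long edges between the two original clusters are ruled out by the confinement conditions, but the freshly inserted stepping-stone vertices must be prevented from short-cutting the bridge, and this is non-trivial because $\connf$ may equal $1$ at short range and may decay only to a constant strictly less than $1$ (not to $0$) at long range. This is the additional separation argument referred to in the introduction: one exploits $R_*<\infty$ (equivalently assumption~\eqref{eqn:longdistanceassumption}), the spacing $\dist{\hat\gamma(kl),\hat\gamma((k+1)l)}=l>R_*-2\delta$ of the stepping stones together with the constraint $\dist{u,\orig}>4\delta$, and the freedom to take $n$ (hence the hyperbolic width $\approx nl$ of the slab $H_1^{\mathrm c}\cap H_2^{\mathrm c}$) large, following the geometric strategy of Madras and Wu~\cite{madras2010trees}; this is where ``sufficiently large $n$'' is used. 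The configurations on which this separation argument fails, together with the degenerate cases $y=\orig$ or $x=v$ and the behaviour near $\lambda=\lambda_{\min}$, contribute an amount that does not grow with $\chi(\lambda)$ and is collected into the additive constant $C_2$; since the estimate is only applied as $\lambda\nearrow\lambda_c$, where the main term diverges, this loss is immaterial. Assembling Steps 1--3 via Mecke's formula yields the claimed inequality.
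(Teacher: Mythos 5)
Your overall architecture matches the paper's: express the derivative via the pivotal-point integral of Lemma~\ref{lem:Two-point-derivatives}, build a stepping-stone chain across the slab between $H_1$ and $H_2$, and absorb exceptional configurations into the additive constant $C_2$. (The paper first uses translation invariance to make $\orig$ itself the pivotal vertex and the first link of the chain, so that both $x$ and $y$ come from the double pivotal integral; your variant, with an inserted pivot $w$ near $V_n$, additionally requires $w$ to be $\varepsilon$-adjacent to two points that are themselves $\approx l$ apart — which needs a small geometric argument you do not supply, since $\connf>\varepsilon$ is only known at distances $\approx l$ — and you omit forbidding the direct edge $\orig\sim x$, without which no inserted vertex can be pivotal. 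These are repairable.)

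The genuine gap is the decoupling in Steps 2--3. You assert that forbidding the remaining edges at $p_1,\dots,p_{n-1}$ costs an \emph{independent} factor $\e^{-\lambda_c(n-1)\int\connf}$, "independent of the two cluster events" because the regions are disjoint. This is false: the event that no point of $\eta$ is adjacent to any $p_k$ depends on the whole point process (every Poisson point, wherever it sits, is adjacent to $p_k$ with probability $\connf(\cdot,p_k)$, and with probability $1$ inside $B_{R_*}(p_k)$), so it is correlated with the confinement events $\C(\cdot)\subset H_1$, $\C(\cdot)\subset H_2$. Conditioning on it thins the process — in regions where $\connf(\cdot,p_k)=1$ it removes all points — and one must show this costs only a multiplicative constant \emph{on the cluster events}, not merely that the conditioning event has positive probability. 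This is exactly the paper's claim \eqref{eqn:middleClaim}, proved by comparing the conditioned law to the Poisson process with density $\lambda\prod_i\left(1-\connf(\cdot,v_i)\right)$ via a Radon--Nikodym argument, restricting $x$ and $y$ to lie outside the finite-measure set $\Ucal_{\varepsilon_1}$ where that density is small or vanishes (this exclusion is what produces the additive $-C_2$, since the integrand is at most $1$ and $\habsd{\Ucal_{\varepsilon_1}}<\infty$), and using $R_*<\infty$ together with the placement of the $V_i$ to re-establish $\conn{y}{\orig}{\xi^{y,\orig}}$ and $\conn{v_n}{x}{\xi^{x,v_n}}$ through vertices outside $\Ucal_{\varepsilon_1}$. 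Your Step 3 names this obstacle but replaces the argument with a pointer to Madras--Wu; the paper stresses that this decoupling is precisely the new difficulty absent from the graph setting, so it cannot be outsourced. (Also, "sufficiently large $n$" is used here chiefly to get $1-\connf(x,y)\geq c$ uniformly for $x\in H_1$, $y\in H_2$ via \eqref{eqn:longdistanceassumption}; the decoupling works for fixed $n$ with $n$-dependent constants.)
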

The crux of the lemma is to write the pivotality in the RHS of \eqref{eqn:integralDerivative} as an integral over half-spaces only (to which subsequently we apply Proposition~\ref{prop:BoundHalf-SpaceSusceptMagnet}). 
\begin{proof}
    We carry out the proof in three steps.

    \vskip.5em
    \noindent\textbf{Step 1.} We first prove that there is a constant $c=c(\lambda_{\min},n)$ such that for sufficiently large $n$ we have 
    \begin{align}
        &\frac{\dd}{\dd \lambda}\int_{\HypDim}\tlam\left(x,\orig\right)\dd x\nonumber\\
        &\hspace{1cm}\geq c\int_{H_1}\int_{H_2} \int_{V_n}\ldots\int_{V_1}\pla\left(\C\left(y,\xi^{y,v_1,\ldots,v_n}\right)\subset H_1,\conn{y}{\orig}{\xi^{y,\orig}}, \C\left(x,\xi^{x,v_1,\ldots,v_n}\right)\subset  H_2,\right.\nonumber\\
        &\hspace{8cm}\left.\conn{v_n}{x}{\xi^{x,v_n}}\right)\dd v_1\ldots\dd v_n\dd y\dd x.\label{eq:step1}
    \end{align}
    To start with, by Lemma~\ref{lem:Two-point-derivatives} we get 
    \begin{align}
        \frac{\dd}{\dd \lambda}\int_{\HypDim}\tlam\left(x,\orig\right)\dd x
        &= \int_{\HypDim}\int_{\HypDim} \pla\left(u\in \piv{\orig,x;\xi^{\orig,u,x}}\right)\dd u\dd x\nonumber\\
        &= \int_{\HypDim}\int_{\HypDim} \pla\left(\orig\in \piv{x,y;\xi^{\orig,x,y}}\right)\dd x\dd y,
    \end{align}
    where in the last equality we used the translation invariance of the model under the hyperbolic measure.

    Observe that $\orig\in \piv{x,y;\xi^{\orig,x,y}}$ if and only if any path from $x$ to $y$ in the graph $\xi^{\orig,x,y}$ passes through $\orig$. We want a lower bound on this so we will start from the event that $\conn{x}{\orig}{\xi^{\orig,x}}$ and $\conn{y}{\orig}{\xi^{\orig,y}}$ and introduce restrictions that mean that $\nconn{x}{y}{\xi^{x,y}}$. Firstly, we cannot have $\adja{x}{y}{\xi^{x,y}}$. This is independent of everything else in the model so will immediately factorise out. We then ask that there is a sequence of vertices $\orig,v_1,v_2,\ldots,v_n$ (where each $v_i\in V_i$) that forms a bridge between $H_1$ and $ H_2$, and that $y$ is connected to $\orig$ within $H_1$ while $x$ is connected to $v_n$ within $ H_2$. To be precise, we bound
    \begin{align}
        \pla\left(\orig\in \piv{x,y;\xi^{\orig,x,y}}\right) &\geq (1-\connf(x,y))\nonumber\\
        &\hspace{0.5cm}\times\pla\left( \exists \left\{v_i\right\}^n_{i=1}\colon v_i\in \eta\cap V_i,\C\left(y,\xi^{y}\right)\subset H_1,\C\left(x,\xi^{x}\right)\subset  H_2\cup\left\{v_1,\ldots,v_n\right\},\right.\nonumber\\
        &\hspace{4cm}\left.\conn{y}{\orig}{\xi^{y,\orig}},\adja{\orig}{v_1}{\xi^{\orig}},\adja{v_1}{v_2}{\xi},\ldots,\right. \nonumber\\
        &\hspace{4cm}\left.\adja{v_{n-1}}{v_n}{\xi},\conn{v_n}{x}{\xi^x}\right).
    \end{align}
    Now we condition on these stepping stone vertices $\left\{v_1,\ldots,v_n\right\}$ being in the configuration, and note that they are all independently and uniformly distributed over the $V_i$. Therefore
    \begin{align}
        &\pla\left( \exists \left\{v_i\right\}^n_{i=1}\colon v_i\in \eta\cap V_i,\C\left(y,\xi^{y}\right)\subset H_1,\C\left(x,\xi^{x}\right)\subset  H_2\cup\left\{v_1,\ldots,v_n\right\},\conn{y}{\orig}{\xi^{y,\orig}},\right.\nonumber\\
        &\hspace{5cm}\left.\adja{\orig}{v_1}{\xi^{\orig}},\adja{v_1}{v_2}{\xi},\ldots,\adja{v_{n-1}}{v_n}{\xi},\conn{v_n}{x}{\xi^x}\right)\nonumber\\
        &\hspace{1cm} \geq \left(\prod^n_{i=1}\pla\left(\#\eta\cap V_i \geq 1\right)\right)\nonumber\\
        &\hspace{2cm}\times\int_{V_n}\ldots\int_{V_1}\pla\left(\C\left(y,\xi^{y}\right)\subset H_1, \conn{y}{\orig}{\xi^{y,\orig}}, \C\left(x,\xi^{x}\right)\subset  H_2,\adja{\orig}{v_1}{\xi^{\orig}},\right.\nonumber\\
        &\hspace{5cm}\left.\adja{v_1}{v_2}{\xi},\ldots,\adja{v_{n-1}}{v_n}{\xi},\conn{v_n}{x}{\xi^x}\;\middle|\;v_1,\ldots,v_n\in\eta\right)\nonumber\\
        &\hspace{13cm}\dd v_1\ldots\dd v_n.
    \end{align}
    Observe that in this step we also change the event $\left\{\C\left(x,\xi^{x}\right)\subset  H_2\cup\left\{v_1,\ldots,v_n\right\}\right\}$ to $\left\{\C\left(x,\xi^{x}\right)\subset  H_2\right\}$. The inequality arises because the latter event is a subset of the former.

    Since $\eta$ is a Poisson point process, conditioning on $v_1,\ldots,v_n\in\eta$ is equivalent to augmenting $\xi$ with these vertices. Therefore
    \begin{align}
        &\pla\left(\C\left(y,\xi^{y}\right)\subset H_1, \conn{y}{\orig}{\xi^{y,\orig}}, \C\left(x,\xi^{x}\right)\subset  H_2,\adja{\orig}{v_1}{\xi^{\orig}},\right.\nonumber\\
        &\hspace{4cm}\left.\adja{v_1}{v_2}{\xi},\ldots,\adja{v_{n-1}}{v_n}{\xi},\conn{v_n}{x}{\xi^x}\;\middle|\;v_1,\ldots,v_n\in\eta\right)\nonumber\\
        &\hspace{1cm} = \pla\left(\C\left(y,\xi^{y,v_1,\ldots,v_n}\right)\subset H_1, \conn{y}{\orig}{\xi^{y,\orig,v_1,\ldots,v_n}}, \C\left(x,\xi^{x,v_1,\ldots,v_n}\right)\subset  H_2,\adja{\orig}{v_1}{\xi^{\orig,v_1}},\right.\nonumber\\
        &\hspace{4cm}\left.\adja{v_1}{v_2}{\xi^{v_1,v_2}},\ldots,\adja{v_{n-1}}{v_n}{\xi^{v_{n-1},v_n}},\conn{v_n}{x}{\xi^{v_1,\ldots,v_n,x}}\right)\nonumber\\
        &\hspace{1cm} \geq \pla\left(\C\left(y,\xi^{y,v_1,\ldots,v_n}\right)\subset H_1, \conn{y}{\orig}{\xi^{y,\orig}}, \C\left(x,\xi^{x,v_1,\ldots,v_n}\right)\subset  H_2,\adja{\orig}{v_1}{\xi^{\orig,v_1}},\right.\nonumber\\
        &\hspace{4cm}\left.\adja{v_1}{v_2}{\xi^{v_1,v_2}},\ldots,\adja{v_{n-1}}{v_n}{\xi^{v_{n-1},v_n}},\conn{v_n}{x}{\xi^{v_n,x}}\right).
    \end{align}
    The inequality arises because $\left\{\conn{y}{\orig}{\xi^{y,\orig,v_1,\ldots,v_n}}\right\}\supset \left\{\conn{y}{\orig}{\xi^{y,\orig}}\right\}$  and $\left\{\conn{v_n}{x}{\xi^{v_1,\ldots,v_n,x}}\right\}\supset \left\{\conn{v_n}{x}{\xi^{v_n,x}}\right\}$. Note that many of the extra vertices are omitted from the adjacency events because these are independent of the entire configuration other than the two vertices in question. This independence can be further used to extract these events from the probability:
    \begin{align}
        &\pla\left(\C\left(y,\xi^{y,v_1,\ldots,v_n}\right)\subset H_1, \conn{y}{\orig}{\xi^{y,\orig}}, \C\left(x,\xi^{x,v_1,\ldots,v_n}\right)\subset  H_2,\adja{\orig}{v_1}{\xi^{\orig,v_1}},\right.\nonumber\\
        &\hspace{4cm}\left.\adja{v_1}{v_2}{\xi^{v_1,v_2}},\ldots,\adja{v_{n-1}}{v_n}{\xi^{v_{n-1},v_n}},\conn{v_n}{x}{\xi^{v_n,x}}\right)\nonumber\\
        &\hspace{1cm} = \pla\left(\C\left(y,\xi^{y,v_1,\ldots,v_n}\right)\subset H_1, \conn{y}{\orig}{\xi^{y,\orig}}, \C\left(x,\xi^{x,v_1,\ldots,v_n}\right)\subset  H_2,\conn{v_n}{x}{\xi^{v_n,x}}\right)\nonumber\\
        &\hspace{4cm}\times\connf\left(\orig,v_1\right)\connf\left(v_1,v_2\right)\ldots\connf\left(v_{n-1},v_n\right).
    \end{align}
    The above steps lead to the bound
    \begin{align}
        &\frac{\dd}{\dd \lambda}\int_{\HypDim}\tlam\left(x,\orig\right)\dd x\nonumber\\
        &\hspace{1cm}\geq\left(\prod^n_{i=1}\pla\left(\#\eta\cap V_i \geq 1\right)\right)\int_{\HypDim}\int_{\HypDim} (1-\connf(x,y))\int_{V_n}\ldots\int_{V_1}\pla\left(\C\left(y,\xi^{y,v_1,\ldots,v_n}\right)\subset H_1, \right.\nonumber\\
        &\hspace{4cm}\left.\conn{y}{\orig}{\xi^{y,\orig}}, \C\left(x,\xi^{x,v_1,\ldots,v_n}\right)\subset  H_2,\conn{v_n}{x}{\xi^{x,v_n}}\right)\nonumber\\
        &\hspace{4cm}\times\connf(\orig,v_1)\ldots\connf\left(v_{n-1},v_n\right)\dd v_1\ldots\dd v_n\dd y\dd x.
    \end{align}
    Because $\eta$ is a Poisson point process with intensity $\lambda$, 
    $$\pla\left(\#\eta\cap V_i \geq 1\right) = 1 - \exp\left(-\lambda\habsd{B_\delta\left(\orig\right)}\right) \geq 1 - \exp\left(-\lambda_{\min}\habsd{B_\delta\left(\orig\right)}\right).$$ 
    Since $\lim_{r\to \infty}\esssup_{x\not\in B_r(\orig)}\connf(x,y)<1$, if $n$ is sufficiently large and we restrict to $x\in H_1$ and $y\in H_2$ then there exists $c>0$ such that $1-\connf\left(x,y\right)\geq c$. Together with the lower bound $\connf\left(v_i,v_{i+1}\right)\geq\ \inf_{u\in V_i,u'\in V_{i+1}}\connf\left(u,u'\right)\geq \varepsilon$ for our stepping stones, these give \eqref{eq:step1}. 

    \vskip.5em
    \noindent\textbf{Step 2.}
    This bound is still too messy for our purposes. We want to remove the augmented vertices $v_1,\ldots,v_{n-1}$ from the integrand, so that this dependence disappears and we can integrate these variables over $V_1,\ldots,V_{n-1}$. This will make it easier to factorise the integral in the following lemma. To this end, we introduce some notation. Given $x\in H_2$ and $y\in H_1$ we define `blue' vertices to be
    \begin{equation}
        \Ycal_B := \bigcup^{n-1}_{i=1}\left\{u\in\eta^y\cap H_1\colon \adja{u}{v_i}{\xi^{y,v_1,\ldots,v_n}}\right\}\cup \bigcup^{n-1}_{i=1}\left\{u\in\eta^x\cap H_2\colon \adja{u}{v_i}{\xi^{x,v_1,\ldots,v_n}}\right\}.
    \end{equation}
    The set $\Ycal_B$ consists of the troublesome vertices for us if we wanted to remove the vertices $v_1,\ldots,v_{n-1}$ from our probability, because the vertices in the first part provide avenues through which the cluster of $y$ could `escape' $ H_1$ in $\xi^{y,v_1,\ldots,v_n}$ but not in $\xi^{y,v_n}$, and vertices in the second part provide avenues through which the cluster of $x$ could `escape' $ H_2$ in $\xi^{x,v_1,\ldots,v_n}$ but not in $\xi^{x,v_n}$. Also note that $\Ycal_B\setminus\left\{x,y\right\}$ is distributed as a Poisson point process whose intensity measure is finite and has total mass less than or equal to $(n-1)\int_\HypDim\connf(x,\orig)\dd x$.

    For $\varepsilon_1\geq0$, $n\geq 2$, and a sequence of points $\left\{v_1,\ldots,v_{n-1}\right\}\subset \HypDim$, we define the measurable set 
    \begin{equation}
        \Ucal_{\varepsilon_1}:= \left\{z\in\HypDim\colon 1-\prod^{n-1}_{i=1}\left(1-\connf\left(z,v_i\right)\right)\geq 1-\varepsilon_1\right\}.
    \end{equation}
    This is the set of positions such that if a vertex was at this position then it would be adjacent to at least one of the vertices $\left\{v_1,\ldots,v_{n-1}\right\}$ with a `high probability.' Observe that $\Ucal_{\varepsilon_1}$ is non-decreasing in $\varepsilon_1$, and it has finite measure:
    \begin{equation}
        \habsd{\Ucal_{\varepsilon_1}}\leq \frac{n-1}{1-\varepsilon_1}\int_\HypDim\connf(x,\orig)\dd x<\infty.
    \end{equation}
    Also observe that since $\lim_{r\to \infty}\esssup_{x\not\in B_r(\orig)}\connf(x,y)<1$, for sufficiently small $\varepsilon_1$ we have that $\Ucal_{\varepsilon_1}$ is bounded except for possibly a null set. 

    We now make the claim that for $\varepsilon_1$ sufficiently small there exists $c=c(n,\lambda,\varepsilon_1)>0$ such that for $x\in H_2\setminus \Ucal_{\varepsilon_1}$ and $y\in H_1\setminus \Ucal_{\varepsilon_1}$
    \begin{multline}
    \label{eqn:middleClaim}
        \pla\left(\C\left(y,\xi^{y,v_1,\ldots,v_n}\right)\subset  H_1, \conn{y}{\orig}{\xi^{y,\orig}}, \C\left(x,\xi^{x,v_1,\ldots,v_n}\right)\subset  H_2,\conn{v_n}{x}{\xi^{x,v_n}}\right) \\\geq c\, \pla\left(\C\left(y,\xi^{y,v_n}\right)\subset  H_1, \conn{y}{\orig}{\xi^{y,\orig}}, \C\left(x,\xi^{x,v_n}\right)\subset  H_2,\conn{v_n}{x}{\xi^{x,v_n}}\right).
    \end{multline}
    For $x\notin H_2\setminus \Ucal_{\varepsilon_1}$ or $y\notin H_1\setminus \Ucal_{\varepsilon_1}$, we will use the trivial lower bound of $0$.

    To demonstrate the strategy, we first consider the case where $R^*=0$. Observe that this means that $\habsd{\Ucal_{\varepsilon_1}}\searrow 0$ as $\varepsilon_1\searrow0$, and in particular that $\Ucal_{\varepsilon_1}\cap  H_1$ and $\Ucal_{\varepsilon_1}\cap  H_2$ are null for sufficiently small $\varepsilon_1$. Since $x\not\in\Ucal_{\varepsilon_1}$ and $y\not\in\Ucal_{\varepsilon_1}$, they are not themselves blue with a positive probability and the Poissonian nature of the remaining vertices tells us that there exists $\tilde c=\tilde c(n,\lambda)>0$ such that $\pla\left(\#\Ycal_B=0\right)= \tilde c$. Furthermore, \emph{if there are no blue vertices}, then $\C\left(y,\xi^{y,v_1,\ldots,v_n}\right)\subset  H_1$ precisely when $\C\left(y,\xi^{y,v_n}\right)\subset  H_1$, and $ \C\left(x,\xi^{x,v_1,\ldots,v_n}\right)\subset  H_2$ precisely when $ \C\left(x,\xi^{x,v_n}\right)\subset  H_2$. Therefore when $x\not\in\Ucal_{\varepsilon_1}$ and $y\not\in\Ucal_{\varepsilon_1}$,
    \begin{align}
        &\pla\left(\C\left(y,\xi^{y,v_1,\ldots,v_n}\right)\subset  H_1, \conn{y}{\orig}{\xi^{y,\orig}}, \C\left(x,\xi^{x,v_1,\ldots,v_n}\right)\subset  H_2,\conn{v_n}{x}{\xi^{x,v_n}}\right)\nonumber\\
        &\hspace{1cm}\geq\pla\left(\C\left(y,\xi^{y,v_1,\ldots,v_n}\right)\subset  H_1, \conn{y}{\orig}{\xi^{y,\orig}}, \C\left(x,\xi^{x,v_1,\ldots,v_n}\right)\subset  H_2,\right.\nonumber\\
        &\hspace{9cm}\left.\conn{v_n}{x}{\xi^{x,v_n}}\;\middle|\;\#\Ycal_B =0\right) \pla\left(\#\Ycal_B =0\right)\nonumber\\
        &\hspace{1cm} = \tilde c \,\pla\left(\C\left(y,\xi^{y,v_n}\right)\subset  H_1, \conn{y}{\orig}{\xi^{y,\orig}}, \C\left(x,\xi^{x,v_n}\right)\subset  H_2,\right.\nonumber\\
        &\hspace{9cm}\left.\conn{v_n}{x}{\xi^{x,v_n}}\;\middle|\;\#\Ycal_B =0\right).
    \end{align}
    If $\xi\sim \pla\left(\cdot \;\middle|\;\#\Ycal_B = 0\right)$, and $\xi'\sim \mathbb{P}_{\nu}$ where $\nu$ is defined by the Radon-Nikodym derivative
    \begin{equation}
        \frac{\dd \nu}{\dd x} = \lambda\prod^{n-1}_{i=1}\left(1-\connf(x,v_i)\right),
    \end{equation}
    then $\xi\dequal \xi'$. Now recall that $\Ucal_{\varepsilon_1}\cap  H_1$ and $\Ucal_{\varepsilon_1}\cap  H_2$ are null for sufficiently small $\varepsilon_1$. Therefore $\frac{1}{\lambda}\frac{\dd \nu}{\dd x}\in\left(\varepsilon_1,1\right]$ almost everywhere. Furthermore since $\lambda-\nu$ is a finite measure (in fact, $\left(\lambda-\nu\right)\left(\HypDim\right)\leq \lambda n \int_\HypDim\connf(x,\orig)\dd x<\infty$), we can modify a configuration distributed as $\pla$ that satisfies the event so that it is now distributed as $\mathbb{P}_\nu$ at only the cost of a factor $\hat c=\hat c(n,\lambda,\varepsilon_1)$. Multiplying $\hat c$ and $\tilde c$ then gives the claimed $c=c(n,\lambda,\varepsilon_1)$.

    Now let us address the case where $R_*\in\left(0,\infty\right)$. Recall that we suppose that $x\not\in\Ucal_{\varepsilon_1}$ and $y\not\in\Ucal_{\varepsilon_1}$ for some $\varepsilon_1>0$. Once again because $\Ycal_B\setminus\left\{x,y\right\}$ is distributed as independent Poisson point processes whose intensity measure is finite, we are able to modify the configuration such that there are no blue vertices. However, since $R_*>0$ we now have the possibility that $\frac{\dd \nu}{\dd x}=0$ on a positive measure part of $ H_1$ or $ H_2$. We therefore have to be careful with reintroducing vertices to ensure that $\conn{y}{\orig}{\xi^{y,\orig}}$ and $\conn{v_n}{x}{\xi^{x,v_n}}$ still occur. Fortunately, having $R_*<\infty$ ensures that $\Ucal_{\varepsilon_1}$ is bounded by $\bigcup^{n-1}_{i=1}B_{R_*+\varepsilon_2+\delta}(v_i)$ for some $\varepsilon_2=\varepsilon_2\left(\varepsilon_1\right)>0$ where $\lim_{\varepsilon_1\searrow0}\varepsilon_2=0$. The symmetry of $\connf$ and the construction of the $V_i$ ensures that there is a positive probability that $\orig$ is adjacent to a vertex in $\eta\cap H_1\cap\Ucal^\mathrm{c}_{\varepsilon_1} $, and $v_n$ is adjacent to a vertex in $\eta\cap H_2\cap\Ucal^\mathrm{c}_{\varepsilon_1} $. There is then a positive probability that these vertices are connected to $y$ and $x$ respectively. This concludes the proof of the claim in \eqref{eqn:middleClaim}.

    \vskip.5em
    \noindent\textbf{Step 3.}
    We now have that for sufficiently large $n$ there exists $c>0$ such that
    \begin{align}
        &\frac{\dd}{\dd \lambda}\int_{\HypDim}\tlam\left(x,\orig\right)\dd x\nonumber\\
        &\hspace{1cm}\geq c \int_{H_1\setminus\Ucal_{\varepsilon_1} }\int_{H_2\setminus \Ucal_{\varepsilon_1}} \int_{V_n}\ldots\int_{V_1}\pla\left(\C\left(y,\xi^{y,v_n}\right)\subset H_1,\conn{y}{\orig}{\xi^{y,\orig}}, \C\left(x,\xi^{x,v_n}\right)\subset  H_2,\right.\nonumber\\
        &\hspace{8cm}\left.\conn{v_n}{x}{\xi^{x,v_n}}\right)\dd v_1\ldots\dd v_n\dd y\dd x\nonumber\\
        &\hspace{1cm} = c\, \habsd{B_\delta\left(\orig\right)}^{n-1}\int_{H_1\setminus\Ucal_{\varepsilon_1}}\int_{H_2\setminus \Ucal_{\varepsilon_1}} \int_{V_n}\pla\left(\C\left(y,\xi^{y,v_n}\right)\subset H_1,\conn{y}{\orig}{\xi^{y,\orig}}, \C\left(x,\xi^{x,v_n}\right)\subset  H_2,\right.\nonumber\\
        &\hspace{8cm}\left.\conn{v_n}{x}{\xi^{x,v_n}}\right)\dd v_n\dd y\dd x.
    \end{align}    
    We can then remove the $\Ucal_{\varepsilon_1}$ terms from the integral domains by performing the following bound. Since we can bound the integrand above by $1$, 
    \begin{align}       
        &\int_{H_1}\int_{H_2}  \pla\left(\C\left(y,\xi^{y,v_n}\right)\subset  H_1, \conn{y}{\orig}{\xi^{y,\orig}},\C\left(x,\xi^{x,v_n}\right)\subset  H_2,\conn{v_n}{x}{\xi^{x,v_n}}\right) \dd y\dd x\nonumber\\
        &\hspace{1cm} \leq \int_{H_1\setminus \Ucal_{\varepsilon_1}}\int_{H_2\setminus \Ucal_{\varepsilon_1}}\pla\left(\C\left(y,\xi^{y,v_n}\right)\subset  H_1, \conn{y}{\orig}{\xi^{y,\orig}},\right.\nonumber\\
        &\hspace{6cm}\left.\C\left(x,\xi^{x,v_n}\right)\subset  H_2,\conn{v_n}{x}{\xi^{x,v_n}}\right) \dd y\dd x + \habsd{\Ucal_{\varepsilon_1}}^2.
    \end{align}
    Therefore after exchanging the order of integration (by Tonelli's theorem) we have that for sufficiently large $n$ there exists $c'>0$ such that
    \begin{align}
        &\frac{\dd}{\dd \lambda}\int_{\HypDim}\tlam\left(x,\orig\right)\dd x\nonumber\\
        &\hspace{1cm}\geq c' \int_{H_1}\int_{H_2} \int_{V_n}\pla\left(\C\left(y,\xi^{y,v_n}\right)\subset H_1,\conn{y}{\orig}{\xi^{y,\orig}}, \C\left(x,\xi^{x,v_n}\right)\subset  H_2,\right.\nonumber\\
        &\hspace{6cm}\left.\conn{v_n}{x}{\xi^{x,v_n}}\right)\dd v_n\dd y\dd x - c'\habsd{B_\delta\left(\orig\right)}\habsd{\Ucal_{\varepsilon_1}}^2.
    \end{align}
    
\end{proof}

\begin{proof}[Proof of Lemma~\ref{lem:susceptibilityderivativeLowerBound}]
For the sake of readability, let us abbreviate
\begin{align}
    E(y):=&\left\{\C\left(y,\xi^{y}\right)\subset  H_1, \conn{y}{\orig}{\xi^{y,\orig}}\right\}, \\
    E(y,v_n):=&\left\{\C\left(y,\xi^{y,v_n}\right)\subset  H_1, \conn{y}{\orig}{\xi^{y,\orig}}\right\}, \\
    F(v_n,x):=&\left\{\C\left(x,\xi^{x,v_n}\right)\subset  H_2,\conn{v_n}{x}{\xi^{v_n,x}}\right\}.
\end{align}
Observe that as $n\to\infty$, we have $\pla\left(E(y,v_n)\cap F(v_n,x)\right) \to \pla\left(E(y)\cap F(v_n,x)\right)$ monotonically since the probability that $v_n$ is adjacent to any vertex in $ H_1$ tends to $0$ monotonically. Therefore for sufficiently large $n$ we have by Lemma~\ref{lem:SteppingStones} the existence of $c>0$ and $c'<\infty$ such that
\begin{equation}
    \frac{\dd}{\dd \lambda}\int_{\HypTwo}\tlam\left(x,\orig\right)\dd x \geq c\int_{H_1}\int_{H_2}\int_{V_n}\pla\left(E(y)\cap F(v,x)\right)\dd v \dd x \dd y - c'
\end{equation}

Our objective now is to condition on a separation event under which $E$ and $F$ are independent events, and therefore the probability and the integrals factorise. 
To this end, we define the separation event
\begin{equation}\label{eq:DefSn}
        \Scal_n := \left\{\xi\colon \forall a\in\eta\cap H_1, \forall b\in\eta\cap H_3^\mathrm{c}, a\not\sim b\right\}\cap\left\{\xi\colon \forall a\in\eta\cap H_3, \forall b\in\eta\cap H_2, a\not\sim b\right\}.
    \end{equation}
    This is constructed so that the configurations on $H_1$ and $H_2$ are independent when we condition on it. 
    By the law of total probability, it is immediate that
    \begin{equation}
    \label{eqn:conditioning}
        \pla\left(E(y)\cap F(v,x)\right) \geq \pla\left(E(y)\cap F(v,x)\;\middle|\;\xi^{\orig,x,y,v}\in\Scal_n\right)\pla\left(\xi^{\orig,x,y,v}\in\Scal_n\right),
    \end{equation}
    and our definition of the conditioning event ensures that for $x\in H_2$, $v\in V_n\subset H_2$, and $y\in H_1$ we have
    \begin{equation}\label{eq:FactorizedLowerBd}
        \pla\left(E(y)\cap F(v,x)\;\middle|\;\xi^{\orig,x,y,v}\in\Scal_n\right) = \pla\left(E(y)\;\middle|\;\xi^{\orig,x,y,v}\in\Scal_n\right)\pla\left(F(v,x)\;\middle|\;\xi^{\orig,x,y,v}\in\Scal_n\right).
    \end{equation}

    \vskip.5em
    We are now bounding $\pla\left(\xi^{\orig,x,y,v}\in\Scal_n\right)$ and the two factors on the RHS of \eqref{eq:FactorizedLowerBd} in order to get that there exists $c''>0$ such that for sufficiently large $n$
    \begin{multline}\label{eqn:EFpartitionbound}
        \frac{\dd}{\dd \lambda}\int_{\HypTwo}\tlam\left(x,\orig\right)\dd x \geq c''\int_{H_1}\int_{H_2}\int_{V_n}\pla\left(E(y)\right)\pla\left(F(v,x)\right)\dd v \dd x \dd y - c'\\
        = c''\left(\int_{H_1}\pla\left(E(y)\right)\dd y\right)\left(\int_{H_2}\int_{V_n}\pla\left(F(v,x)\right)\dd v \dd x\right) - c'.
    \end{multline}
    To this end, we first find a lower bound for $\pla\left(\xi^{\orig,x,y,v}\in\Scal_n\right)$. Note that since $x,v\in H_2$ and $\orig,y\in H_1$, we can decompose the event $\left\{\xi^{\orig,x,y,v}\in\Scal_n\right\}$ into the non-disjoint events
    \begin{equation}
    \label{eqn:SeparationDecomposition}
        \left\{\xi^{\orig,x,y,v}\in\Scal_n\right\} = \left\{x\not\sim y\right\}\cap \left\{v\not\sim y\right\}\cap\left\{x\not\sim \orig\right\}\cap\left\{v\not\sim y\right\}\cap\left\{\xi^{x,v}\in\Scal_n\right\}\cap \left\{\xi^{\orig,y}\in\Scal_n\right\}.
    \end{equation}
    The first four events are independent of the last two, and these last two are both decreasing events. Therefore the FKG inequality \cite[(2.19)]{HeyHofLasMat19} produces
    \begin{equation}
        \pla\left(\xi^{\orig,x,y,v}\in\Scal_n\right) \geq \left(1-\connf(x,y)\right)\left(1-\connf\left(v,y\right)\right)\left(1-\connf(x,\orig)\right)\left(1-\connf\left(v,\orig\right)\right)\pla\left(\xi^{x,v}\in\Scal_n\right)\pla\left(\xi^{\orig,y}\in\Scal_n\right).
    \end{equation}
    Recall that $\overline{\varphi}:= \lim_{r\to\infty}\esssup_{x\not\in\B_r\left(\orig\right)}\connf\left(x,\orig\right)<1$, and $\int_\HypDim\connf(x,\orig)\dd x<\infty$. Therefore Mecke's formula and dominated convergence implies that $\lim_{n\to\infty}\pla\left(\xi^{x,v}\in\Scal_n\right)=1$ uniformly in $x,v\in H_2$ and $\lim_{n\to\infty}\pla\left(\xi^{\orig,y}\in\Scal_n\right)=1$ uniformly in $y\in H_1$. Hence for sufficiently large $n$ we have
    \begin{equation}
        \pla\left(\xi^{\orig,x,y,v}\in\Scal_n\right)\geq \frac{1}{2}\left(1-\overline{\varphi}\right)^4>0
    \end{equation}
    for all $y\in H_1$ and $x,v\in H_2$.

    To bound $\pla\left(E(y)\;\middle|\;\xi^{\orig,x,y,v}\in\Scal_n\right)$ from below, we first adjust the conditioning event to finally decouple the $x$ and $v$, and $y$ dependence. By using \eqref{eqn:SeparationDecomposition} and the independence of the edges between $x$, $v$, $y$, and $\orig$ from the rest of the graph, 
    \begin{align}\label{eq:EXiSn}
        &\pla\left(E(y)\;\middle|\;\xi^{\orig,x,y,v}\in\Scal_n\right)\nonumber \\
        &\hspace{1cm}= \frac{\pla\left(E(y)\cap\left\{\xi^{\orig,x,y,v}\in\Scal_n\right\}\right)}{\pla\left(x\not\sim y\right)\pla\left(v\not\sim y\right)\pla\left(x\not\sim \orig\right)\pla\left(v\not\sim y\right)\pla\left(\left\{\xi^{x,v}\in\Scal_n\right\}\cap \left\{\xi^{\orig,y}\in\Scal_n\right\}\right)}\nonumber\\
        &\hspace{1cm}\geq \frac{\pla\left(E(y)\cap\left\{\xi^{\orig,x,y,v}\in\Scal_n\right\}\right)}{\pla\left(\left\{\xi^{x,v}\in\Scal_n\right\}\cap \left\{\xi^{\orig,y}\in\Scal_n\right\}\right)}\nonumber\\
        &\hspace{1cm} = \pla\left(E(y)\cap\left\{x\not\sim y\right\}\cap \left\{v\not\sim y\right\}\cap\left\{x\not\sim \orig\right\}\cap\left\{v\not\sim y\right\}\;\middle|\;\left\{\xi^{x,v}\in\Scal_n\right\}\cap \left\{\xi^{\orig,y}\in\Scal_n\right\}\right)
    \end{align}
    Note that as discussed above, FKG, Mecke and dominated convergence implies that
    \begin{equation}
        \pla\left(\left\{\xi^{x,v}\in\Scal_n\right\}\cap \left\{\xi^{\orig,y}\in\Scal_n\right\}\right) \geq \pla\left(\xi^{x,v}\in\Scal_n\right)\pla\left(\xi^{\orig,y}\in\Scal_n\right)\to 1
    \end{equation}
    as $n\to \infty$, uniformly for $x,v\in H_2$ and $y\in H_1$. Therefore
    \begin{equation}\label{eq:RatioPEPE}
        \frac{\pla\left(E(y)\cap\left\{x\not\sim y\right\}\cap \left\{v\not\sim y\right\}\cap\left\{x\not\sim \orig\right\}\cap\left\{v\not\sim y\right\}\;\middle|\;\left\{\xi^{x,v}\in\Scal_n\right\}\cap \left\{\xi^{\orig,y}\in\Scal_n\right\}\right)}{\pla\left(E(y)\cap\left\{x\not\sim y\right\}\cap \left\{v\not\sim y\right\}\cap\left\{x\not\sim \orig\right\}\cap\left\{v\not\sim y\right\}\right)} \to 1
    \end{equation}
    as $n\to \infty$, uniformly for $x,v\in H_2$ and $y\in H_1$. 
    Using \eqref{eq:EXiSn} with \eqref{eq:RatioPEPE} gives 
    \[\pla\left(E(y)\;\middle|\;\xi^{\orig,x,y,v}\in\Scal_n\right)
    \geq
    \frac12
    \pla\left(E(y)\cap\left\{x\not\sim y\right\}\cap \left\{v\not\sim y\right\}\cap\left\{x\not\sim \orig\right\}\cap\left\{v\not\sim y\right\}\right)
    \]
    for $n$ sufficiently large. 
    We observe independence of the events $E(y)$, $\left\{x\not\sim y\right\}$, $\left\{v\not\sim y\right\}$, $\left\{x\not\sim \orig\right\}$, and $\left\{v\not\sim y\right\}$ (so that their joint probability factors), and have that for $n$ sufficiently large (independent of $x,v\in H_2$, and $y\in H_1$)
    \begin{equation}
        \pla\left(E(y)\;\middle|\;\xi^{\orig,x,y,v}\in\Scal_n\right) \geq \frac{1}{2} \left(1-\overline{\connf}\right)^4\pla\left(E(y)\right).
    \end{equation}
    The same argument also gives
    \begin{equation}
        \pla\left(F(v,x)\;\middle|\;\xi^{\orig,x,y,v}\in\Scal_n\right) \geq \frac{1}{2} \left(1-\overline{\connf}\right)^4\pla\left(F(v,x)\right)
    \end{equation}
    for $n$ sufficiently large (again independent of $x,v\in H_2$, and $y\in H_1$). 
    This finishes the proof of \eqref{eqn:EFpartitionbound}.

    \vskip.5em
    We next claim that there exist $c'''>0$ and $c'<\infty$ such that
    \begin{equation}\label{eq:Step2'}
         \frac{\dd}{\dd \lambda}\int_{\HypDim}\tlam\left(x,\orig\right)\dd x \geq c''' \ela\left[\#\C\left(\orig,\xi^{\orig}\right) \Id_{\left\{\C\left(\orig,\xi^{\orig}\right)\subset H^-_{2\varepsilon}\right\}}\right]\ela\left[\#\C\left(\orig,\xi^{\orig}\right) \Id_{\left\{\C\left(\orig,\xi^{\orig}\right)\subset H^-_\delta\right\}}\right]- c'.
    \end{equation}
    In order to show this claim, we further bound from below the two terms in brackets on the RHS of \eqref{eqn:EFpartitionbound}. 
    We start with $\int_{H_1}\pla\left(E(y)\right)\dd y$ and write out
    \begin{align}
        \int_{H_1}\pla\left(E(y)\right)\dd y &= \int_{H_1}\pla\left(\C\left(y,\xi^{y}\right)\subset  H_1, \conn{y}{\orig}{\xi^{y,\orig}}\right)\dd y\nonumber\\
        &\geq \int_{H_1}\pla\left(\C\left(y,\xi^{\orig,y}\right)\subset  H_1, \conn{y}{\orig}{\xi^{y,\orig}}\right)\dd y\nonumber\\
        &= \int_{H_1}\pla\left(\C\left(\orig,\xi^{\orig,y}\right)\subset  H_1, \conn{y}{\orig}{\xi^{y,\orig}}\right)\dd y.
    \end{align}
    Here we first introduced the vertex $\orig$ to $\xi^y$, which produces a sub-event of the original event, and then were able to replace the first $y$ in $\C\left(y,\xi^{\orig,y}\right)$ with an $\orig$ because we are also on the event that $\conn{y}{\orig}{\xi^{y,\orig}}$ and therefore the two clusters are the same cluster. Finally we are able to use Mecke's formula to get
    \begin{equation}\label{eqn:Eintegralbound}
        \int_{H_1}\pla\left(E(y)\right)\dd y \ge \frac{1}{\lambda}\ela\left[\#\C\left(\orig,\xi^{\orig}\right) \Id_{\left\{\C\left(\orig,\xi^{\orig}\right)\subset H_1\right\}}\right].
    \end{equation}
    This is exactly what we require once we notice that $\lambda<\lambda_\mathrm{c}<\infty$ and $H_1:=H^-_{2\varepsilon}$.

    We now bound $\int_{H_2}\int_{V_n}\pla\left(F(v,x)\right)\dd v\dd x$. First we use Tonelli's lemma to exchange the order of integration, and replace an instance of $x$ in the integrand with $v$ because we know they are in the same cluster:
    \begin{multline}
        \int_{H_2}\int_{V_n}\pla\left(F(v,x)\right)\dd v \dd x = \int_{V_n}\int_{H_2}\pla\left(\C\left(x,\xi^{x,v}\right)\subset  H_2,\conn{v}{x}{\xi^{v,x}}\right) \dd x\dd v\\ = \int_{V_n}\int_{H_2}\pla\left(\C\left(v,\xi^{x,v}\right)\subset  H_2,\conn{v}{x}{\xi^{v,x}}\right) \dd x\dd v
    \end{multline}
    Now fix $v\in V_n$. For all $v\in V_n$, there exists a unique geodesic $\gamma'$ that contains $v$ and is perpendicular to $\partial H_2$ (we take the `positive' direction on $\gamma'$ to be the direction towards $\partial H_2$ from $\orig$). Let $\iota$ be an isometry that maps $v\mapsto\orig$ and $\gamma'\mapsto\hat{\gamma}$ (matching the `positive' directions). This can be constructed by composing a translation that takes $v\mapsto\orig$, and then appropriately rotating about $\orig$. If we define $H':=\iota\left(H_2\right)$, then we find
    \begin{equation}
        H' = H^-_{\dist{v,\partial H_2}}.
    \end{equation}
    Applying this isometry $\iota$ for each $v$, we find
    \begin{equation}
        \int_{H_2}\pla\left(\C\left(v,\xi^{x,v}\right)\subset  H_2,\conn{v}{x}{\xi^{v,x}}\right) \dd x =\int_{H'}\pla\left(\C\left(\orig,\xi^{x,\orig}\right)\subset  H',\conn{\orig}{x}{\xi^{x,\orig}}\right) \dd x.
    \end{equation}
    The right hand side of this equation depends upon $v$ only via $H'$, and is non-decreasing in $\dist{v,\partial H_2}$. Since our construction of $V_n$ and $H_2$ ensures that
    \begin{equation}
         \inf_{v\in V_n}\dist{v,\partial H_2} \geq \delta,
    \end{equation}
    hence $H^-_\delta\subset H'$, and we thus have 
    \begin{multline}\label{eqn:Fintegralbound}
        \int_{H_2}\int_{V_n}\pla\left(F(v,x)\right)\dd v \dd x \geq \habsd{B_\delta\left(\orig\right)}\int_{H^-_\delta}\pla\left(\C\left(\orig,\xi^{x,\orig}\right)\subset  H^-_\delta,\conn{\orig}{x}{\xi^{x,\orig}}\right) \dd x\\
        = \frac{\habsd{B_\delta\left(\orig\right)}}{\lambda}\ela\left[\#\C\left(\orig,\xi^{\orig}\right) \Id_{\left\{\C\left(\orig,\xi^{\orig}\right)\subset H^-_\delta\right\}}\right],
    \end{multline}
    where the last equality follows from Mecke's formula.
    This is excatly what we require once we notice that  $\lambda<\lambda_\mathrm{c}<\infty$.
    
    Combining \eqref{eqn:EFpartitionbound} with \eqref{eqn:Eintegralbound} and \eqref{eqn:Fintegralbound} gives \eqref{eq:Step2'}.  
    
    \vskip.5em
    In the final step we show how \eqref{eq:Step2'} yields the assertion of the lemma. With Proposition~\ref{prop:BoundHalf-SpaceSusceptMagnet}, this leads to there existing  $C>0$ and $c'<\infty$ such that
    \begin{equation}
         \frac{\dd}{\dd \lambda}\int_{\HypTwo}\tlam\left(x,\orig\right)\dd x \geq C \ela\left[\#\C\left(\orig,\xi^{\orig}\right)\right]^2- c',
    \end{equation}

    Since Mecke's formula implies $\chi\left(\lambda\right) = 1+ \lambda \int_\HypDim\tlam\left(x,\orig\right)\dd x$, this inequality implies
    \begin{equation}
        \frac{\dd \chi}{\dd \lambda} \geq \frac{1}{\lambda}\left(\chi\left(\lambda\right)-1\right) +\lambda C \chi\left(\lambda\right)^2 - \lambda c'.
    \end{equation}
    Then the lower bound in Theorem~\ref{thm:CritExponents}\ref{thm:CritExponents - Susceptibility} implies that $\lim_{\lambda\nearrow\lambda_{\mathrm{c}}}\chi\left(\lambda\right)=\infty$, and therefore there exist $\varepsilon,K>0$ such that for $\lambda\in\left(\lambda_{c}-\varepsilon,\lambda_{\mathrm{c}}\right)$
    \begin{equation}
        \frac{\dd \chi}{\dd \lambda} \geq K\chi\left(\lambda\right)^2.
    \end{equation}

\end{proof}

This now completes the proof of Theorem~\ref{thm:CritExponents}\ref{thm:CritExponents - Susceptibility}.

\subsection{Percolation Upper Bound and Cluster Tail Exponent}
\label{sec:PercolationUpperBounds}

\begin{lemma}\label{lem:magnetupperbound}
    There exists $K<\infty$ such that
    \begin{equation}
        M\left(\lambda_c,q\right)\leq K q^{\frac{1}{2}}
    \end{equation}
    for all $q\in\left(0,\frac{1}{2}\right)$.
\end{lemma}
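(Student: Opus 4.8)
The plan is to turn the multiplicative differential inequality of Lemma~\ref{lem:TripartiteDifferentialEqnMagnetization} into a one-variable differential inequality in $q$ for the magnetization at fixed subcritical $\lambda$, integrate it, and then pass to the limit $\lambda\nearrow\lambda_c$ using continuity of $M$ in $\lambda$.

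Fix $\lambda\in(0,\lambda_c)$ and $q\in(0,\tfrac{1}{2})$. Since, under the ghost-labelling convention, the augmented vertex $\orig$ is itself a ghost with probability $q$, we have $M(\lambda,q)\ge q>0$, so we may divide the inequality $M(\lambda,q)\ge cM(\lambda,q)^2\chi(\lambda,q)$ of Lemma~\ref{lem:TripartiteDifferentialEqnMagnetization} by $M(\lambda,q)^2$ to get $\chi(\lambda,q)\le \tfrac{1}{c\,M(\lambda,q)}$. Substituting the identity $(1-q)\tfrac{\partial M}{\partial q}(\lambda,q)=\chi(\lambda,q)$ from Lemma~\ref{lem:MagnetPrelim} and using $1-q>\tfrac{1}{2}$ on $(0,\tfrac{1}{2})$ yields
\begin{equation*}
    M(\lambda,q)\,\frac{\partial M}{\partial q}(\lambda,q)\le \frac{2}{c},\qquad\text{equivalently}\qquad \frac{\partial}{\partial q}\Big(\tfrac{1}{2}M(\lambda,q)^2\Big)\le \frac{2}{c}.
\end{equation*}
Integrating this over $q$ and invoking $\lim_{q\searrow 0}M(\lambda,q)=\theta(\lambda)=0$ (valid because $\lambda<\lambda_c$; again Lemma~\ref{lem:MagnetPrelim}) gives $\tfrac{1}{2}M(\lambda,q)^2\le \tfrac{2}{c}q$, i.e.
\begin{equation*}
    M(\lambda,q)\le \frac{2}{\sqrt{c}}\,q^{1/2}\qquad\text{for all }\lambda\in(0,\lambda_c),\ q\in(0,\tfrac{1}{2}).
\end{equation*}

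Finally, for each fixed $q\in(0,\tfrac{1}{2})$ the map $\lambda\mapsto M(\lambda,q)$ is analytic, hence continuous, on $(0,\infty)$ by Lemma~\ref{lem:MagnetPrelim}, so letting $\lambda\nearrow\lambda_c$ in the last display yields $M(\lambda_c,q)\le \tfrac{2}{\sqrt{c}}\,q^{1/2}$, which is the claim with $K=2/\sqrt{c}$. There is no substantial obstacle here: the only points requiring a word of care are the strict positivity $M(\lambda,q)>0$ that legitimises the division by $M^2$ (handled by the bound $M\ge q$), the boundary value $M(\lambda,0^+)=\theta(\lambda)=0$ in the subcritical regime, and the passage $\lambda\to\lambda_c$, which rests on continuity of $M(\cdot,q)$ rather than on any monotonicity (though monotonicity of $\lambda\mapsto M(\lambda,q)$ together with left-continuity would serve just as well).
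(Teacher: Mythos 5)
Your proposal is correct and follows essentially the same route as the paper: combine Lemma~\ref{lem:TripartiteDifferentialEqnMagnetization} with the identity $(1-q)\tfrac{\partial M}{\partial q}=\chi(\lambda,q)$ to obtain $\tfrac{\partial}{\partial q}\left(M(\lambda,q)^2\right)\leq 4/c$ for $q\in(0,\tfrac12)$, integrate from the boundary value $\lim_{q\searrow 0}M(\lambda,q)=\theta(\lambda)=0$ for $\lambda<\lambda_c$, and pass to $\lambda_c$ by continuity of $\lambda\mapsto M(\lambda,q)$. The only cosmetic difference is that you divide by $M^2$ first (justified by $M>0$) rather than multiplying through, and the resulting constant $K=2/\sqrt{c}$ is the same.
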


This leads to a proof of the claim in Theorem~\ref{thm:CritExponents}\ref{thm:CritExponents - Percolation} that for $d=2,3$ there exists $C'<\infty$ and $\varepsilon>0$ such that
\begin{equation}
    \theta\left(\lambda\right) \leq C'\left(\lambda-\lambda_c\right)_+
\end{equation}
for all $\lambda<\lambda_c+\varepsilon$.

\begin{proof}[Proof of the upper bound in Theorem~\ref{thm:CritExponents}\ref{thm:CritExponents - Percolation}]
    This follows exactly the argument of \cite[Section~4.2]{caicedo2023critical}. By using \eqref{eqn:magnetisationDiffEqnPtOne} to bound a slight increase in the $\lambda$ parameter with a slight increase in the $q$ parameter, and using the bound from Lemma~\ref{lem:magnetupperbound}, a bound of the form
    \begin{equation}
        M\left(\lambda_\mathrm{c}+\varepsilon,q\right) \leq c_0 \varepsilon + c_1q^\frac{1}{2}
    \end{equation}
    follows. Then taking $q\searrow 0$ produces the required upper bound on $\theta\left(\lambda\right)$.
\end{proof}

Lemma~\ref{lem:magnetupperbound} is also the key ingredient for the bound on the cluster tail exponent in Theorem~\ref{thm:CritExponents}\ref{thm:CritExponents - tailexponent}. 
This theorem claims that for $d=2,3$ there exist $0<C\leq C'<\infty$ such that for all $n\in\N$
\begin{equation}
        Cn^{-\frac{1}{2}}\leq \p_{\lambda_c}\left(\#\C\left(\orig,\xi^\orig\right)\geq n\right) \leq C'n^{-\frac{1}{2}}.
    \end{equation}

\begin{proof}[Proof of Theorem~\ref{thm:CritExponents}\ref{thm:CritExponents - tailexponent}]
Begin by observing
\begin{multline}
    \p_{\lambda_c} \left(\#\C\left(\orig\right) \geq n\right) =  \frac{\e}{\e-1} \sum_{l \geq n} \left(1- \tfrac 1e\right) \p_{\lambda_c} \left(\#\C\left(\orig\right) = l\right) \\\leq  \frac{\e}{\e-1} \sum_{l \geq n} \left(1- \left(1-\tfrac 1n\right)^l\right) \p_{\lambda_c} \left(\#\C\left(\orig\right) = l\right) \leq \frac{\e}{\e-1} M\left(\lambda_c,\tfrac 1n\right).
\end{multline}
Lemma~\ref{lem:magnetupperbound} then implies
\begin{equation}
    \p_{\lambda_c} \left(\#\C(\orig) \geq n\right) \leq \frac{\e}{\e-1}K n^{-1/2}.
\end{equation}

To prove the lower bound, let $0 \leq q < \tilde q < 1$. Then
\begin{align}
    M\left(\lambda_c,q\right) & \leq q \sum_{l < n} l \p_{\lambda_c} \left(\#\C\left(\orig\right) = l\right) + \sum_{l \geq n} \p_{\lambda_c} \left(\#\C\left(\orig\right) = l\right) \nonumber\\
	& \leq \frac{q}{\tilde q} \sum_{l < n} \e^{l \tilde q} \left(1-\left(1-\tilde q\right)^l\right) \p_{\lambda_c} \left(\#\C\left(\orig\right) = l\right) + \p_{\lambda_c} \left(\#\C\left(\orig\right) \geq n\right) \nonumber\\
		& \leq \frac{q}{\tilde q} \e^{\tilde q n} M(\lambda_c, \tilde{q}) + \p_{\lambda_c} \left(\#\C\left(\orig\right) \geq n\right),
\end{align}
where we have used $\left(1-\left(1-q\right)^l\right) \leq lq$ and $l \tilde q \leq \e^{l \tilde q} \left( 1- \left(\e^{- \tilde q}\right)^l\right) \leq \e^{l \tilde q} \left(1- \left(1-\tilde q\right)^l\right)$. Setting $\tilde q = 1/n$ and using Lemma~\ref{lem:magnetupperbound} then gives
\begin{equation}
    \p_{\lambda_c} \left(\#\C\left(\orig\right) \geq n\right) \geq M(\lambda_c,q) - q\e Kn^{\frac{1}{2}}.
\end{equation}
We now use Lemma~\ref{lem:magnetlowerbound} to get
\begin{equation}
    \p_{\lambda_c} \left(\#\C\left(\orig\right) \geq n\right) \geq q^{\frac{1}{2}} \left(\frac{1}{\sqrt{2}} - K\e n^{\frac{1}{2}}q^{\frac{1}{2}} \right).
\end{equation}
Choosing $q=\frac{1}{8K^2 \e^2 n}$ shows that
\begin{equation}
    \p_{\lambda_c} \left(\#\C\left(\orig\right) \geq n\right) \geq C n^{-1/2}
\end{equation}
for $C := \frac{1}{8K \e}$.
\end{proof}

Next we prove Lemma~\ref{lem:magnetupperbound}.
To arrive at an upper bound for the magnetisation at $\lambda=\lambda_c$, we will derive an inequality relating the magnetisation to the percolation probability. 
To do this, we will use a ``stepping stone'' idea like we did in Section~\ref{sec:SusceptibilityUpperBound}.

Given an isometrically parametrised geodesic $\gamma$ and $S\in\R$, let $H^-_S\left(\gamma\right)$ and $H^+_S\left(\gamma\right)$ be the half-spaces produced using $\gamma$ in the same way $H^-_S$ and $H^+_S$ were produced using $\hat\gamma$ in Section~\ref{sec:SusceptibilityUpperBound}.

Recall $R_* := \sup\left\{r>0\colon \esssup_{x\not\in B_r\left(\orig\right)}\connf(x,\orig) = 1\right\}$, and $\lim_{r\to\infty}\esssup_{x\not\in B_r\left(\orig\right)}\connf\left(x,\orig\right)<1$ implies $R_*<\infty$. Let $\hat\gamma_0$ be an isometrically parametrised geodesic such that $\hat\gamma_0(0)=\orig$ (the rotational symmetry of the model makes the specific choice of $\hat\gamma_0$ unimportant).

Since $\int_\HypDim\connf\left(x,\orig\right)\dd x >0$, there exist distinct $u_1,u_2\in\HypDim$ and $\delta,\varepsilon>0$ such that
\begin{equation}
    \dist{u_1,\orig} > R_*-2\delta,\qquad \dist{u_1,H^-_{2\delta}\left(\hat\gamma_0\right)}>2\delta,
    \qquad \dist{u_2,H^-_{2\delta}\left(\hat\gamma_0\right)} > 2\delta,
\end{equation}
\begin{equation}
    \dist{u_2,\orig} =\dist{u_1,\orig},\qquad\dist{u_1,u_2} > 2\delta,
\end{equation}
and
\begin{equation}
    \essinf_{x\in B_\delta\left(u_1\right),y\in B_\delta\left(\orig\right)}\connf\left(x,y\right) >\varepsilon,\qquad \essinf_{x\in B_\delta\left(u_1\right),y\in B_\delta\left(\orig\right)}\connf\left(x,y\right) >\varepsilon.
\end{equation}
Let $l:=\dist{u_1,\orig}$. Also let $\hat\gamma_1$ denote the isometrically parametrised geodesic such that $\hat\gamma_1\left(0\right)=\orig$ and $\hat\gamma_1\left(l\right) = u_1$, and let $\hat\gamma_2$ denote the isometrically parametrised geodesic such that $\hat\gamma_2\left(0\right)=\orig$ and $\hat\gamma_2\left(l\right) = u_2$. Given $m,n\in\N$, define
\begin{equation}
    H_0 := H^-_{2\delta}\left(\hat\gamma_0\right),\qquad H_1:= H^+_{nl-2\delta}\left(\hat\gamma_1\right),\qquad H_2:= H^+_{ml-2\delta}\left(\hat\gamma_2\right).
\end{equation}
Observe that for sufficiently large $m$ and $n$, the \emph{three} half-spaces $H_0$, $H_1$, and $H_2$ are disjoint. We then construct two families of stepping stones. For all $k\in\N$, define
\begin{align}
    v_k := \hat\gamma_1\left(kl\right),& \qquad V_k:= B_\delta\left(v_k\right),\\
    w_k := \hat \gamma_2\left(kl\right),&\qquad W_k:=B_\delta\left(w_k\right).
\end{align}

\begin{figure}
    \centering
    \begin{tikzpicture}[scale=1.3]
        \begin{scope}
            \clip (-4,-4) rectangle (5,4);
            \draw[dashed,fill=gray!15] (-11,0)  circle (8.66);
            \draw[dashed,fill=gray!15] (5.5,9.53)  circle (8.66);
            \draw[dashed,fill=gray!15] (5.5,-9.53)  circle (8.66);
            \draw[thick] (-5,1.16) -- (5,-4.62);
            \draw[thick] (-5,-1.16) -- (5,4.62);
            \draw[thick] (-6,0) -- (6,0);
        \end{scope}
        \draw (-3,1) node{$H_0$};
        \draw (3.5,2.6) node{$H_1$};
        \draw (2.7,3.7) node{$\hat\gamma_1$};
        \draw (3.5,-2.6) node{$H_2$};
        \draw (2.6,-3.6) node{$\hat\gamma_2$};
        \draw (3.5,0) node[above]{$\hat\gamma_0$};
        \filldraw (-3,0)node[above]{$\orig$} circle (2pt);
        \draw[fill=gray!50,thick] (-1.5,0.87) circle (10pt) node{$V_1$};
        \draw[fill=gray!50,thick] (0,1.73) circle (10pt);
        \draw[fill=gray!50,thick] (1.5,2.60) circle (10pt) node{$V_n$};
        \draw[fill=gray!50,thick] (-1.5,-0.87) circle (10pt) node{$W_1$};
        \draw[fill=gray!50,thick] (0,-1.73) circle (10pt);
        \draw[fill=gray!50,thick] (1.5,-2.60) circle (10pt) node{$W_m$};
    \end{tikzpicture}
    \caption{Sketch of the `stepping stones' used to connect $\orig$ to two disjoint far half-spaces in Section~\ref{sec:PercolationUpperBounds}.}
    \label{fig:SteppingStonesmagnetisation}
\end{figure}

\begin{lemma}
\label{lem:TripartiteDifferentialEqnmagnetisation}
    There exists $c>0$ such that for all $q\in\left(0,\frac{1}{2}\right)$ and $\lambda<\lambda_\mathrm{c}$
    \begin{equation}
        M\left(\lambda,q\right) \geq c M\left(\lambda,q\right)^2 \chi\left(\lambda,q\right).
    \end{equation}
\end{lemma}

This lemma allows us to prove Lemma~\ref{lem:magnetupperbound}.

\begin{proof}[Proof of Lemma~\ref{lem:magnetupperbound}]
    Starting from the result of Lemma~\ref{lem:TripartiteDifferentialEqnmagnetisation} and using \eqref{eqn:magnetisationDerivative}, we get
    \begin{equation}
        M\left(\lambda,q\right) \geq c\left(1-q\right)M\left(\lambda,q\right)^2 \frac{\partial M}{\partial q}\left(\lambda,q\right) \geq \frac{c}{2}M\left(\lambda,q\right)^2 \frac{\partial M}{\partial q}\left(\lambda,q\right)
    \end{equation}
    for $\lambda<\lambda_\mathrm{c}$ and $q\in\left(0,\frac{1}{2}\right)$. Therefore 
    \begin{equation}
        \frac{\partial }{\partial q}\left[M\left(\lambda,q\right)^2\right] \leq \frac{4}{c}.
    \end{equation}
    We now want to use this differential inequality with the boundary being the limit at $q\searrow0$. Since $\lim_{q\searrow 0}M\left(\lambda,q\right) = \theta\left(\lambda\right) = 0$ for $\lambda<\lambda_\mathrm{c}$, we get
    \begin{equation}
        M\left(\lambda,q\right)^2 \leq \frac{4}{c}q,
    \end{equation}
    and
    \begin{equation}
        M\left(\lambda,q\right) \leq \frac{2}{\sqrt{c}}q^\frac{1}{2},
    \end{equation}
    for $\lambda < \lambda_\mathrm{c}$ and $q\in\left(0,\frac{1}{2}\right)$.

    Now \cite[Lemma~3.4]{caicedo2023critical} proves that $\lambda\mapsto M\left(\lambda,q\right)$ is analytic on $\left(0,\infty\right)$ for all $q\in\left(0,1\right)$. In particular it is continuous. Therefore
    \begin{equation}
        M\left(\lambda_\mathrm{c},q\right) \leq \frac{2}{\sqrt{c}}q^\frac{1}{2},
    \end{equation}
    for all $q\in\left(0,\frac{1}{2}\right)$.
\end{proof}

Now we finalise the proof of Lemma~\ref{lem:magnetupperbound} by proving Lemma~\ref{lem:TripartiteDifferentialEqnmagnetisation}.
\begin{proof}[Proof of Lemma~\ref{lem:TripartiteDifferentialEqnmagnetisation}]
    Recall $M\left(\lambda,q\right):= \plaq\left(\conn{\orig}{\Gcal}{\xi^\orig}\right)$. Now for each $x\in\HypDim$ let us define
    \begin{equation}
        F_x:=\left\{x\in\piv{\orig,\Gcal;\xi^{\orig}}\right\}\cap \left\{\dconn{x}{\Gcal}{\xi}\right\},
    \end{equation}
    and then $F:= \bigcup_{x\in\eta}F_x$. Since $F$ implies $\left\{\conn{\orig}{\Gcal}{\xi^\orig}\right\}$,
    \begin{equation}
        M\left(\lambda,q\right) \geq \plaq\left(F\right).
    \end{equation}
    Then by using Mecke's formula we get
    \begin{equation}
        \plaq\left(F\right) = \elaq\left[\sum_{x\in\eta}\Id_{\left\{F_x\text{ in }\xi\right\}}\right] =\lambda\int_{\HypDim}\plaq\left(F_x\text{ in }\xi^{x}\right)\dd x.
    \end{equation}
    To lower bound $\plaq\left(F\right)$ we will find a lower bound for $\int_{\HypDim}\plaq\left(F_x\text{ in }\xi^{x}\right)\dd x$. The aim is to express the probability as integrals of three different events (denoted $E(y)$, $F(v)$, $G(w)$), which are integrated over the three disjoint half spaces $H_0$, $H_1$, $H_2$ -- see Figure~\ref{fig:SteppingStonesmagnetisation}. This bound is formulated in \eqref{eq:EFGintBd}.

    By using the definition of $F_x$ and the translation symmetry of the model, we get
    \begin{align}
        \int_{\HypDim}\plaq\left(F_x\text{ in }\xi^{x}\right)\dd x & = \int_{\HypDim}\plaq\left(x\in\piv{\orig,\Gcal,\xi^{\orig, x}}, \dconn{x}{\Gcal}{\xi^x}\right)\dd x\nonumber\\
         & = \int_{\HypDim}\plaq\left(\orig\in\piv{y,\Gcal,\xi^{\orig, y}}, \dconn{\orig}{\Gcal}{\xi^\orig}\right)\dd y.
    \end{align}
    The objective is now to find a lower bound for the integrand. The event $\left\{\orig\in\piv{y,\Gcal,\xi^{\orig, y}}\right\}$ can be written as
    \begin{equation}
        \left\{\orig\in\piv{y,\Gcal,\xi^{\orig, y}}\right\} = \left\{\nconn{y}{\Gcal}{\xi^y}\right\}\cap\left\{\conn{y}{\Gcal}{\xi^{y,\orig}}\right\},
    \end{equation}
    that is, $y$ is connected to $\Gcal$ when $\orig$ is there, but is not connected when $\orig$ is absent. We now find a lower bound for the second of these events in the intersection by constructing a particular way in which $y$ is connected to $\Gcal$ when $\orig$ is present. First we say that $y$ is connected to $\orig$ when the latter is present (that is, $\conn{y}{\orig}{\xi^{y,\orig}}$) and that this is the only way in which $\C\left(y,\xi^{y,\orig}\right)$ leaves $H_0$ (that is, $\C\left(y,\xi^y\right)\subset H_0$). We then say that there exists a path of non-ghost vertices from $\orig$ to a vertex in $H_1$, and a (vertex) disjoint path of non-ghost vertices from $\orig$ to a vertex in $H_2$. We will label the vertices in these paths as $\left(v_1,v_2,\ldots,v_n\right)$ and $\left(w_1,w_2,\ldots,w_m\right)$ respectively. For notational convenience, we denote $\vec{v} = \left\{v_1,v_2,\ldots,v_n\right\}$ and $\vec{w}= \left\{w_1,w_2,\ldots,w_m\right\}$. We then require that $v_n$ and $w_m$ each connect to $\Gcal$ (that is, $\conn{v_n}{\Gcal}{\xi}$ and $\conn{w_m}{\Gcal}{\xi}$) without leaving $H_1$ or $H_2$ other than via the vertices $\vec{v}$ or $\vec{w}$ (that is, $\C\left(v_n,\xi\left[\eta\setminus\left(\left\{v_1,\ldots,v_{n-1}\right\}\cup\vec{w}\right)\right]\right)\subset H_1$ and $\C\left(w_m,\xi\left[\eta\setminus\left(\vec{v}\cup(\left\{w_1,\ldots,w_{m-1}\right\}\right)\right]\right)\subset H_2$). Note that in this construction we not only have $\conn{y}{\Gcal}{\xi^{y,\orig}}$, but also $\dconn{\orig}{\Gcal}{\xi^\orig}$. Therefore given $y\in\HypDim$,
    \begin{align}
        &\plaq\left(\orig\in\piv{y,\Gcal,\xi^{\orig, y}}, \dconn{\orig}{\Gcal}{\xi^\orig}\right) \nonumber\\
        &\hspace{1cm} \geq (1-q)^{n+m}\plaq\left(\nconn{y}{\Gcal}{\xi^y},\conn{y}{\orig}{\xi^{\orig,y}},
        \C\left(y,\xi^y\right)\subset H_1,\exists \left\{v_i\right\}_{i=1}^n, \left\{w_i\right\}_{i=1}^m\colon \right.\nonumber\\
        &\hspace{2cm}v_i\in\eta\cap V_i\cap \Gcal^c,w_i\in\eta\cap W_i\cap\Gcal^c,\adja{\orig}{v_1}{\xi^\orig},\adja{v_{1}}{v_2}{\xi},\ldots,\adja{v_{n-1}}{v_n}{\xi},\nonumber\\
        &\hspace{2cm}\adja{\orig}{w_1}{\xi^\orig},\adja{w_{1}}{w_2}{\xi},\ldots,\adja{w_{m-1}}{w_m}{\xi},\C\left(v_n,\xi\left[\eta\setminus\left(\left\{v_1,\ldots,v_{n-1}\right\}\cup\vec{w}\right)\right]\right)\subset H_1,\nonumber\\
        &\hspace{2cm}\left. \C\left(w_m,\xi\left[\eta\setminus\left(\vec{v}\cup(\left\{w_1,\ldots,w_{m-1}\right\}\right)\right]\right)\subset H_2,\conn{v_n}{\Gcal}{\xi}, \conn{w_m}{\Gcal}{\xi}\right).
    \end{align}
    As in the proof of Lemma~\ref{lem:SteppingStones}, we can condition on the positions of vertices $\vec{v}\cup\vec{w}$. Since the stepping stones are disjoint the point processes on these sets is independent, and this produces
    \begin{align}
        &\plaq\left(\orig\in\piv{y,\Gcal,\xi^{\orig, y}}, \dconn{\orig}{\Gcal}{\xi^\orig}\right) \nonumber\\
        &\hspace{1cm} \geq (1-q)^{n+m}\left(\prod^n_{i=1}\pla\left(\#\eta\cap V_i\geq 1\right)\right)\left(\prod^m_{i=1}\pla\left(\#\eta\cap W_i\geq 1\right)\right)\nonumber\\
        &\hspace{2cm} \times \int_{V_1}\ldots\int_{V_n}\int_{W_1}\ldots\int_{W_m}  \plaq\left(\nconn{y}{\Gcal}{\xi^y},\conn{y}{\orig}{\xi^{\orig,y}},
        \C\left(y,\xi^y\right)\subset H_0,\right.\nonumber\\
        &\hspace{6cm} \adja{\orig}{v_1}{\xi^\orig},\adja{v_{1}}{v_2}{\xi},\ldots,\adja{v_{n-1}}{v_n}{\xi},\nonumber\\
        &\hspace{6cm}\adja{\orig}{w_1}{\xi^\orig},\adja{w_{1}}{w_2}{\xi},\ldots,\adja{w_{m-1}}{w_m}{\xi},\nonumber\\
        &\hspace{6cm} \C\left(v_n,\xi\left[\eta\setminus\left(\left\{v_1,\ldots,v_{n-1}\right\}\cup\vec{w}\right)\right]\right)\subset H_1,\nonumber\\
        &\hspace{6cm} \C\left(w_m,\xi\left[\eta\setminus\left(\vec{v}\cup(\left\{w_1,\ldots,w_{m-1}\right\}\right)\right]\right)\subset H_2,\nonumber\\
        &\hspace{6cm} \left.\conn{v_n}{\Gcal}{\xi}, \conn{w_m}{\Gcal}{\xi} \;\middle|\; v_1,\ldots,v_n,w_1,\ldots,w_m\in\eta\right)\nonumber\\
        &\hspace{11cm}\dd w_m \ldots \dd w_1\dd v_n \ldots \dd v_1\nonumber\\
        &\hspace{1cm} \geq (1-q)^{n+m}\left(\prod^n_{i=1}\pla\left(\#\eta\cap V_i\geq 1\right)\right)\left(\prod^m_{i=1}\pla\left(\#\eta\cap W_i\geq 1\right)\right)\nonumber\\
        &\hspace{2cm} \times \int_{V_1}\ldots\int_{V_n}\int_{W_1}\ldots\int_{W_m}  \plaq\left(\nconn{y}{\Gcal}{\xi^{y,\vec{v},\vec{w}}},\conn{y}{\orig}{\xi^{\orig,y,\vec{v},\vec{w}}},
        \C\left(y,\xi^{y,\vec{v},\vec{w}}\right)\subset H_0,\right.\nonumber\\
        &\hspace{6cm} \adja{\orig}{v_1}{\xi^{\orig,v_1}},\adja{v_{1}}{v_2}{\xi^{v_1,v_2}},\ldots,\adja{v_{n-1}}{v_n}{\xi^{v_{n-1},v_n}},\nonumber\\
        &\hspace{6cm}\adja{\orig}{w_1}{\xi^{\orig,w_1}},\adja{w_{1}}{w_2}{\xi^{w_1,w_2}},\ldots,\adja{w_{m-1}}{w_m}{\xi^{w_{m-1},w_m}},\nonumber\\
        &\hspace{6cm} \C\left(v_n,\xi^{v_n}\right)\subset H_1, \C\left(w_m,\xi^{w_m}\right)\subset H_2,\nonumber\\
        &\hspace{6cm} \left.\conn{v_n}{\Gcal}{\xi^{\vec{v},\vec{w}}}, \conn{w_m}{\Gcal}{\xi^{\vec{v},\vec{w}}}\right)\nonumber\\
        &\hspace{11cm}\dd w_m \ldots \dd w_1\dd v_n \ldots \dd v_1,
    \end{align}
    where we have omitted the extra vertices from the adjacency events because the adjacency is independent of the other vertices. Using this independence of the adjacency events and the bounds $\connf\left(v_i,v_{i+1}\right)\geq \varepsilon$ and $\connf\left(w_i,w_{i+1}\right)\geq \varepsilon$, and removing vertices from increasing events gives
    \begin{align}
        &\plaq\left(\orig\in\piv{y,\Gcal,\xi^{\orig, y}}, \dconn{\orig}{\Gcal}{\xi^\orig}\right) \nonumber\\
        &\hspace{1cm} \geq \varepsilon^{n+m}\left(\prod^n_{i=1}\pla\left(\#\eta\cap V_i\geq 1\right)\right)\left(\prod^m_{i=1}\pla\left(\#\eta\cap W_i\geq 1\right)\right)\nonumber\\
        &\hspace{2cm} \times \int_{V_1}\ldots\int_{V_n}\int_{W_1}\ldots\int_{W_m}  \plaq\left(\nconn{y}{\Gcal}{\xi^{y,\vec{v},\vec{w}}},\conn{y}{\orig}{\xi^{\orig,y}},
        \C\left(y,\xi^{y,\vec{v},\vec{w}}\right)\subset H_0,\right.\nonumber\\
        &\hspace{7cm} \C\left(v_n,\xi^{v_n}\right)\subset H_1, \C\left(w_m,\xi^{w_m}\right)\subset H_2,\nonumber\\
        &\hspace{7cm} \left.\conn{v_n}{\Gcal}{\xi^{v_n}}, \conn{w_m}{\Gcal}{\xi^{w_m}}\right)\nonumber\\
        &\hspace{11cm}\dd w_m \ldots \dd w_1\dd v_n \ldots \dd v_1.
    \end{align}
    
By the same argument as in Lemma~\ref{lem:SteppingStones}, we can omit the stepping stone vertices from some of the events at the cost of a constant factors:
\begin{align}
        &\plaq\left(\nconn{y}{\Gcal}{\xi^{y,\vec{v},\vec{w}}},\conn{y}{\orig}{\xi^{\orig,y}},\C\left(y,\xi^{y,\vec{v},\vec{w}}\right)\subset H_0, \C\left(v_n,\xi^{v_n}\right)\subset H_1, \C\left(w_m,\xi^{w_m}\right)\subset H_2,\right.\nonumber\\
        &\hspace{8cm} \left.\conn{v_n}{\Gcal}{\xi^{v_n}},\conn{w_m}{\Gcal}{\xi^{w_m}}\right)\nonumber\\
        &\hspace{1cm} \geq c\,\plaq\left(\nconn{y}{\Gcal}{\xi^{y}},\conn{y}{\orig}{\xi^{\orig,y}},\C\left(y,\xi^{y}\right)\subset H_0, \C\left(v_n,\xi^{v_n}\right)\subset H_1, \C\left(w_m,\xi^{w_m}\right)\subset H_2,\right.\nonumber\\
        &\hspace{8cm} \left.\conn{v_n}{\Gcal}{\xi^{v_n}},\conn{w_m}{\Gcal}{\xi^{w_m}}\right) - \Id_{\left\{y\in\Ucal_\varepsilon\right\}}\nonumber\\
        &\hspace{1cm} \geq c\,\plaq\left(E\left(y\right)\cap F\left(v_n\right)\cap G\left(w_m\right)\right) - \Id_{\left\{y\in\Wcal_\varepsilon\right\}},
\end{align}
where 
\begin{align}
    E(y) &:= \left\{\nconn{y}{\Gcal}{\xi^y}\right\}\cap\left\{\conn{y}{\orig}{\xi^{y,\orig}}\right\}\cap\left\{\C\left(y,\xi^y\right)\subset H_0\right\},\\
    F(v) &:= \left\{\conn{v}{\Gcal}{\xi^v}\right\}\cap \left\{\C\left(v,\xi^v\right)\subset H_1\right\},\\
    G(w) &:= \left\{\conn{w}{\Gcal}{\xi^w}\right\}\cap \left\{\C\left(w,\xi^w\right)\subset H_2\right\}.
\end{align}

We therefore have that there exists $c,c'>0$ such that for $m$ and $n$ sufficiently large and $\lambda\in\left[\lambda_{\min},\lambda_\mathrm{c}\right)$,
\begin{equation}\label{eq:EFGintBd}
    \plaq\left(F\right) \geq c\int_{H_0}\int_{V_n}\int_{W_m}\plaq\left(E\left(y\right)\cap F\left(v\right)\cap G\left(w\right)\right)\dd w \dd v\dd y - c'.
\end{equation}

\vskip.5em
We next proceed as before and factorise the three events appearing in \eqref{eq:EFGintBd} after conditioning on a suitable separation event. 
Repeating the argument of Lemma~\ref{lem:susceptibilityderivativeLowerBound}, the events in the probability become asymptotically independent as $m,n\to\infty$. Therefore there exists $c''>0$ such that for $m$ and $n$ sufficiently large and $\lambda\in\left[\lambda_{\min},\lambda_\mathrm{c}\right)$,
\begin{align}
    &\int_{H_0}\int_{V_n}\int_{W_m}\plaq\left(E\left(y\right)\cap F\left(v\right)\cap G\left(w\right)\right)\dd w \dd v\dd y \nonumber\\
    &\hspace{5cm}\geq c''\int_{H_0}\int_{V_n}\int_{W_m}\plaq\left(E\left(y\right)\right)\plaq\left(F\left(v\right)\right)\plaq\left(G\left(w\right)\right)\dd w \dd v\dd y\nonumber\\
    &\hspace{5cm}= c''\int_{H_0}\plaq\left(E\left(y\right)\right)\dd y \int_{V_n}\plaq\left(F\left(v\right)\right) \dd v \int_{W_m}\plaq\left(G\left(w\right)\right)\dd w.
    \label{eq:EFGprodBd}
\end{align}

We now need to lower bound the three integrals on the RHS of \eqref{eq:EFGprodBd}, and do this one by one.
By Mecke's formula, 
\begin{align}
    \int_{H_0}\plaq\left(E\left(y\right)\right)\dd y &= \int_{H_0}\plaq\left(\nconn{y}{\Gcal}{\xi^y}, \conn{y}{\orig}{\xi^{y,\orig}},\C\left(y,\xi^y\right)\subset H_0\right)\dd y\nonumber\\
    &= \frac{1}{\lambda}\elaq\left[\#\left\{y\in\eta\colon \nconn{y}{\Gcal}{\xi}, \conn{y}{\orig}{\xi^{\orig}},\C\left(y,\xi\right)\subset H_0\right\}\right]\nonumber\\
    &\geq \frac{1}{\lambda_{\mathrm{c}}}\elaq\left[\#\left\{y\in\eta\colon \nconn{y}{\Gcal}{\xi}, \conn{y}{\orig}{\xi^{\orig}},\C\left(y,\xi\right)\subset H_0\right\}\right]\nonumber\\
    & \geq \frac{1}{\lambda_{\mathrm{c}}}\left(\elaq\left[\#\C\left(\orig,\xi^{\orig}\right)\Id_{\left\{\C\left(\orig,\xi^{\orig}\right)\cap \Gcal = \emptyset\right\}} \Id_{\left\{\C\left(\orig,\xi^{\orig}\right)\subset H_0\right\}}\right]-1\right).
\end{align}

    To manage the $v$ and $w$ integrals, we transform the half spaces $H_1$ and $H_2$ for each value of $v\in V_n$ and $w\in W_m$. Like at the end of the proof of Lemma~\ref{lem:susceptibilityderivativeLowerBound}, for all $v\in V_n$, there exists a unique geodesic $\gamma'$ that contains $v$ and is perpendicular to $\partial H_1$ (we take the `positive' direction on $\gamma'$ to be the direction towards $\partial H_1$ from $\orig$). Let $\iota$ be an isometry that maps $v\mapsto\orig$ and $\gamma'\mapsto\hat{\gamma}_0$ (matching the `positive' directions). If we define $H':=\iota\left(H_1\right)$, then we find
    \begin{equation}
        H' = H^-_{\dist{v,\partial H_1}}\left(\hat\gamma_0\right).
    \end{equation}
    Since $\inf_{v\in V_n}\dist{v,\partial H_1}\geq \delta$,
    \begin{multline}
        \int_{V_n}\plaq\left(F\left(v\right)\right) \dd v = \int_{V_n}\plaq\left(\conn{\orig}{\Gcal}{\xi^\orig},\C\left(\orig,\xi^\orig\right)\subset H'\right)\dd v \\\geq \habsd{B_\delta\left(\orig\right)}\plaq\left(\conn{\orig}{\Gcal}{\xi^\orig},\C\left(\orig,\xi^\orig\right)\subset H^-_{\delta}\left(\hat\gamma_0\right)\right).
    \end{multline}
    Similarly, 
    \begin{equation}
        \int_{W_m}\plaq\left(G\left(w\right)\right) \dd w \geq \habsd{B_\delta\left(\orig\right)}\plaq\left(\conn{\orig}{\Gcal}{\xi^\orig},\C\left(\orig,\xi^\orig\right)\subset H^-_\delta\left(\hat\gamma_0\right)\right).
    \end{equation}
    Together, the three bounds above yield a lower bound for \eqref{eq:EFGprodBd}. Together with \eqref{eq:EFGintBd} we thus get the following lower bound:
    there exists $c'''>0$ such that for $\lambda\in\left[\lambda_{\min},\lambda_{\mathrm{c}}\right)$, 
    \begin{multline}
        M\left(\lambda,q\right) \geq \plaq\left(F\right) \geq c''' \elaq\left[\#\C\left(\orig,\xi^{\orig}\right)\Id_{\left\{\C\left(\orig,\xi^{\orig}\right)\cap \Gcal = \emptyset\right\}} \Id_{\left\{\C\left(\orig,\xi^{\orig}\right)\subset H^-_\varepsilon\left(\hat\gamma_0\right)\right\}}\right] \\\times\plaq\left(\conn{\orig}{\Gcal}{\xi^\orig},\C\left(\orig,\xi^\orig\right)\subset H^-_\delta\left(\hat\gamma_0\right)\right)^2 - c'.
    \end{multline}
    
    \vskip.5em
    In the final step of the proof, we employ Proposition~\ref{prop:BoundHalf-SpaceSusceptMagnet} to replace the terms on the right hand side with $M\left(\lambda,q\right)$ and $\chi\left(\lambda,q\right)$ at the cost of a constant multiplicative factor. This means that there exist $\varepsilon,K>0$ such that 
    \begin{equation}
        M\left(\lambda,q\right) \geq KM\left(\lambda,q\right)^2\chi\left(\lambda,q\right)
    \end{equation}
    for all $\lambda\in\left(\lambda_c-\varepsilon,\lambda_c \right)$ and $q\in\left(0,\frac{1}{2}\right)$. To extend this to $\lambda\in\left[0,\lambda_c\right)$, observe that $M\left(\lambda,q\right)\in\left[0,1\right]$ and therefore 
    \begin{equation}
        M\left(\lambda,q\right) \leq M\left(\lambda,q\right)^2.
    \end{equation}
    For $\lambda\in\left[0,\lambda_c-\varepsilon\right]$ we have
    \begin{equation}
        \chi\left(\lambda,q\right) \leq \chi\left(\lambda\right) \leq \chi\left(\lambda_c-\varepsilon\right)<
        \infty,
    \end{equation}
    and therefore
    \begin{equation}
        M\left(\lambda,q\right) \geq \frac{1}{\chi\left(\lambda_c-\varepsilon\right)}M\left(\lambda,q\right)^2\chi\left(\lambda,q\right).
    \end{equation}
    Therefore for all $\lambda\in\left[0,\lambda_c\right)$ and $q\in\left(0,\frac{1}{2}\right)$
    \begin{equation}
        M\left(\lambda,q\right) \geq c M\left(\lambda,q\right)^2\chi\left(\lambda,q\right)
    \end{equation}
    with $c=\min\left\{K,\chi\left(\lambda_c-\varepsilon\right)^{-1}\right\}>0$.
\end{proof}

This now completes the proof of Theorem~\ref{thm:CritExponents}\ref{thm:CritExponents - Percolation} and \ref{thm:CritExponents}\ref{thm:CritExponents - tailexponent}.

\subsection{Cluster Moment Exponent}
\label{sec:clustermomentexponent}
The derivation of the cluster moment exponent here closely follows the analogous argument in \cite{Ngu87}, with versions of lemmas from \cite{AizNew84} and \cite{durrett1985thermodynamic}.

We first bound the $n$-point function using tree diagrams. Given $n\geq 2$, $\lambda>0$, and $x_1,\ldots,x_n\in\HypDim$, define the $n$-point function to be
    \begin{equation}
        \tau_{n,\lambda}\left(x_1,\ldots,x_n\right) := \pla\left(x_1\longleftrightarrow x_2 \longleftrightarrow \ldots \longleftrightarrow x_n \text{ in }\xi^{x_1,\ldots,x_n}\right).
    \end{equation}
To bound $\tau_{n,\lambda}$, we will make use of Greg trees. A \emph{Greg tree with $n$ labelled vertices} is defined to be a tree with two colours of vertices: there are $n$ `blue' labelled vertices with degrees $\geq 1$, and $m$ `red' unlabelled vertices with degree $\geq 3$. These can be constructed iteratively in the manner described in \cite{flight1990many}. The only Greg tree with $1$ labelled vertex is one blue vertex by itself. Given the set of Greg trees with $n$ labelled vertices, the set of Greg trees with $n+1$ labelled vertices can be constructed by taking each Greg tree with $n$ labelled vertices and doing one of the following with the extra labelled (blue) vertex:
\begin{enumerate}
    \item \label{enum:GregOptionOne}attach the new blue vertex to any pre-existing vertex (blue or red),
    \item \label{enum:GregOptionTwo} insert the new blue vertex into the middle of an edge,
    \item \label{enum:GregOptionThree} insert a red vertex into the middle of an edge and attach the new blue vertex to that red vertex,
    \item \label{enum:GregOptionFour} replace a pre-existing red vertex with the new blue vertex.
\end{enumerate}
The following diagrams represent some examples of option~\ref{enum:GregOptionOne} for the iterative step:
\begin{equation}
    \GregFourEg \mapsto \GregFourEgStepOneVOne \qquad\text{ or }\qquad \GregFourEg \mapsto \GregFourEgStepOneVTwo.
\end{equation}
Here is an example of option~\ref{enum:GregOptionTwo}:
\begin{equation}
    \GregFourEg \mapsto \GregFourEgStepTwo.
\end{equation}
Here is an example of option~\ref{enum:GregOptionThree}:
\begin{equation}
    \GregFourEg \mapsto \GregFourEgStepThree.
\end{equation}
Here is an example of option~\ref{enum:GregOptionFour}:
\begin{equation}
    \GregFourEg \mapsto \GregFourEgStepFour.
\end{equation}
Observe that these options each add at most $2$ edges to the tree (attained by option~\ref{enum:GregOptionThree}). Therefore a Greg tree with $n$ labelled vertices has at most $2n-3$ edges (for $n\geq 2$). In particular, this means that the number of Greg trees with $n$ labelled vertices, $N_n$, is finite, and it has been proven in \cite{foulds1980determining} that
\begin{equation}
    N_n = \frac{1}{\sqrt{2}}\frac{1}{n^2}\e^{\frac{3}{4}-n}\left(2\e^{-\frac{1}{2}}-1\right)^{\frac{3}{2}-n}n^n\left(1+o\left(1\right)\right)
\end{equation}
as $n\to\infty$\footnote{The sequence $N_n$ is sequence $A005263$ on the Online Encyclopedia of Integer Sequences.}.

Let $G_n$ denote the set of Greg trees with $n$ labelled vertices. Then given $g\in G_n$, define $r(g)$ to be the number of red (unlabelled) vertices in $g$, and $E(g)$ to be the set of edges in $g$. Then for $n\geq 2$, $\lambda>0$, and $x_1,\ldots,x_n\in \HypDim$ define
\begin{equation}
    T_{n,\lambda}\left(x_1,\ldots,x_n\right):= \sum_{g\in G_n}\lambda^{r(g)}\int_{\left(\HypDim\right)^{r(g)}}\prod_{\left\{z,z'\right\}\in E(g)}\tlam\left(z,z'\right)\dd y_1 \ldots y_{r(g)}.
\end{equation}
If $r(g)=0$, the `integral' just returns the value of the integrand.

\begin{lemma}
\label{lem:nPointvsTree}
    For all $n\geq 2$, $\lambda>0$, and $x_1,\ldots,x_n\in\HypDim$,
    \begin{equation}
        \tau_{n,\lambda}\left(x_1,\ldots,x_n\right) \leq T_{n,\lambda}\left(x_1,\ldots,x_n\right).
    \end{equation}
\end{lemma}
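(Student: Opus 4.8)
The plan is to induct on $n\geq 2$. For the base case $n=2$ we have $G_2$ consisting of a single Greg tree with two blue vertices joined by an edge, so $T_{2,\lambda}(x_1,x_2)=\tlam(x_1,x_2)=\tau_{2,\lambda}(x_1,x_2)$ and there is in fact equality. For the inductive step, suppose the bound holds for $\tau_{n,\lambda}$; we want to bound $\tau_{n+1,\lambda}(x_1,\ldots,x_{n+1})$. The key probabilistic input is a \emph{tree-graph inequality} in the spirit of \cite{AizNew84}: conditioning on the event $\{x_1\longleftrightarrow\cdots\longleftrightarrow x_{n+1}\}$, there must be some vertex $z$ (possibly one of the $x_i$, possibly a Poisson point) at which the connections to $x_{n+1}$ and to the cluster spanned by $x_1,\ldots,x_n$ branch, and one decomposes the event according to the location of the last such branch point. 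Concretely, $x_1\longleftrightarrow\cdots\longleftrightarrow x_{n+1}$ implies that there exists $z$ such that $x_1\longleftrightarrow\cdots\longleftrightarrow x_n$ occurs, $z\longleftrightarrow x_{n+1}$ occurs, and $z$ lies on the spanning structure, with the three arms (into the $x_{[1:n]}$-cluster, out to $x_{n+1}$, and the connection of $z$ to that cluster) realised disjointly. Applying the BK inequality then factorises the probability as a product of two-point functions and a smaller $n$-point-type quantity, with an integral over the location of $z$ when $z\notin\{x_1,\ldots,x_n\}$.

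The combinatorial heart of the argument is to check that this branching decomposition is \emph{exactly} mirrored by the four iterative construction rules for Greg trees described above. Given a Greg tree $g'\in G_{n+1}$, deleting the blue vertex labelled $n+1$ (and, if its removal leaves a red vertex of degree $<3$, suppressing that red vertex, or relabelling appropriately) produces a Greg tree $g\in G_n$ together with a record of which of the four operations was used to pass from $g$ to $g'$. Operation~\ref{enum:GregOptionOne} corresponds to $z$ being an already-present (blue or red) vertex; operation~\ref{enum:GregOptionTwo} to $z$ being a new Poisson vertex inserted on an existing edge (splitting one factor $\tlam(a,b)$ into $\tlam(a,z)\tlam(z,b)$ with $\int\dd z$, and $z=x_{n+1}$ is then attached, but here $x_{n+1}$ IS the subdivision point so this is the sub-case where the branch vertex coincides with $x_{n+1}$); operations~\ref{enum:GregOptionThree} and~\ref{enum:GregOptionFour} to the remaining ways a new red branch vertex is created or an old one promoted. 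Tracking the factor $\lambda^{r(g)}$ and the integrals $\int_{(\HypDim)^{r(g)}}$, each new red vertex contributes exactly one factor of $\lambda$ (from Mecke's formula applied to the extra Poisson branch point) and one spatial integral, matching the definition of $T_{n+1,\lambda}$. Summing the BK bound over the possible locations of the branch vertex, and over which edge/vertex of the smaller diagram it attaches to, reproduces $\sum_{g'\in G_{n+1}}$.

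I expect the main obstacle to be the bookkeeping in the inductive step: making the correspondence between ``ways to extend a connection event by one more endpoint'' and ``ways to extend a Greg tree by one more labelled vertex'' genuinely bijective (or at least surjective onto the terms we need), while correctly handling the degenerate cases where the new branch vertex is one of the pre-existing labelled vertices $x_i$ (no new integral, no new $\lambda$) versus a fresh Poisson point (new integral and new $\lambda$ via Mecke), and where suppressing a red vertex after deletion is required so that $g$ is still a valid Greg tree. The BK inequality itself is applied exactly as in the classical lattice tree-graph inequality and in the continuum version used in \cite{HeyHofLasMat19,caicedo2023critical}; the only genuinely new feature relative to the lattice case is that internal (branch) vertices may themselves be labelled external vertices, which is precisely the reason Greg trees rather than ordinary trees appear. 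The hyperbolic geometry of the ambient space plays no role here — the argument is identical to the Euclidean one — so it suffices to cite the structure of the classical proof and carry out the Greg-tree induction carefully.
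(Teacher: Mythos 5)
Your route is genuinely different from the paper's. The paper does not induct on $n$: it argues directly that on the event $\{x_1\leftrightarrow\cdots\leftrightarrow x_n\}$ one may extract a spanning tree of the cluster, prune it back to the $x_i$, record the unlabelled degree-$\geq 3$ branch points $y_1,\ldots,y_m$, and contract the resulting disjoint paths to edges; the combinatorial skeleton so obtained \emph{is} a Greg tree, with the $x_i$ as blue vertices and the $y_j$ as red vertices. A union bound over $g\in G_n$, Mecke's formula (which produces the factor $\lambda^{r(g)}$ and the integrals over the red-vertex positions), and a single application of the BK inequality to the disjoint occurrence $\bigcirc_{\{z,z'\}\in E(g)}\{\conn{z}{z'}{\xi^{z,z'}}\}$ then give the bound in one pass. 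The four iterative construction rules for Greg trees are never invoked in that proof; they are used elsewhere only to bound $\#E(g)$ and to see that $N_n<\infty$.

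The concrete concern with your inductive step is the following. After BK peels off the factor $\tlam(z,x_{n+1})$, the remaining factor is the probability of the event ``$x_1,\ldots,x_n$ are all connected \emph{and} $z$ sits at a prescribed location on the spanning structure, with the relevant arms disjoint.'' This is not $\tau_{n,\lambda}(x_1,\ldots,x_n)$ --- bounding it so discards the location of $z$ and yields $T_{n,\lambda}\cdot\int\tlam(z,x_{n+1})\,\dd z$ rather than the Greg-tree terms --- and it is not a quantity to which the inductive hypothesis $\tau_{n,\lambda}\leq T_{n,\lambda}$ applies, since $T_{n,\lambda}$ is a number, not an event into which $z$ can be ``attached at an edge or vertex.'' To close the induction you must carry a strengthened, event-level statement through it (a disjoint-occurrence decomposition over the edges of a Greg tree with a marked attachment point), and at that stage you are essentially performing the paper's single global decomposition anyway. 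So the ingredients you list (BK, Mecke, Greg trees, the correspondence between branch-point cases and the four construction rules) are the right ones, but the step you defer as ``bookkeeping'' is where the mathematical content lies, and as stated the induction does not close.
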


\begin{proof}
    Suppose that the vertices $x_1,\ldots,x_n$ are in the same cluster in $\xi^{x_1,\ldots,x_n}$. Then there exists a spanning tree of this cluster in $\xi^{x_1,\ldots,x_n}$ that, of course, contains $\left\{x_1,\ldots,x_n\right\}$. Let us prune away the branches of the tree beyond any of the vertices $\left\{x_1,\ldots,x_n\right\}$ - they will be irrelevant for our considerations. For the remaining tree, let $\left\{y_1,\ldots,y_m\right\}$ denote the vertices that have degree $\geq 3$ that are not one of the initial $\left\{x_1,\ldots,x_n\right\}$. The remaining vertices of our tree will then lie on one of the disjoint paths between elements of $\left\{x_1,\ldots,x_n,y_1,\ldots,y_m\right\}$. Contracting these paths to edges will then produce a tree graphs that details how $\left\{x_1,\ldots,x_n,y_1,\ldots,y_m\right\}$ can be connected by disjoint paths. If $\left\{E_i\right\}_{i\in I}$ is a finite set of events, let us denote
    \begin{equation}
        \underset{i\in I}{\bigcirc}E_i := E_{i_1}\circ\left( E_{i_2}\circ\ldots\circ \left(E_{i_{n-1}}\circ E_{i_n}\right)\ldots\right),
    \end{equation}
    where $n=\# I$, and $E\circ F$ denotes the vertex disjoint occurrence of the events $E$ and $F$ (an associative binary operation on increasing events) -- see \cite[Section~2.3]{HeyHofLasMat19} for details. Then by a union bound and Mecke's formula,    
\begin{align}
    \tau_{n,\lambda}\left(x_1,\ldots,x_n\right) &\leq \pla\left(\exists g\in G_n, \left\{y_1,\ldots,y_m\right\}\subset \eta\colon V(g)=\left\{x_1,\ldots,x_n,y_1,\ldots,y_m\right\} \right.\nonumber\\
    &\hspace{5cm}\left.\text{ and } \underset{\left\{z,z'\right\}\in E(g)}{\bigcirc}\left\{\conn{z}{z'}{\xi^{x_1,\ldots,x_n}}\right\}\right)\nonumber\\
    &\leq \sum_{g\in G_n}\ela\left[\sum_{\left(y_1,\ldots,y_{r(g)}\right)\in\eta^{\left(r(g)\right)}}\Id_{\left\{\underset{\left\{z,z'\right\}\in E(g)}{\bigcirc}\left\{\conn{z}{z'}{\xi^{x_1,\ldots,x_n}}\right\}\right\}}\right]\nonumber\\
    & = \sum_{g\in G_n}\lambda^{r(g)}\int_{\left(\HypDim\right)^{r(g)}} \pla\left(\underset{\left\{z,z'\right\}\in E(g)}{\bigcirc}\left\{\conn{z}{z'}{\xi^{x_1,\ldots,x_n,y_1,\ldots,y_{r(g)}}}\right\}\right) \dd y_1\ldots \dd y_{r(g)}
\end{align}
Now because we are taking the disjoint occurrence of connection events inside the probability, all the augmenting vertices other than $z$ and $z'$ can be omitted from the configuration $\xi^{x_1,\ldots,x_n,y_1,\ldots,y_{r(g)}}$ - they are being used on other paths. That is,
\begin{equation}
    \pla\left(\underset{\left\{z,z'\right\}\in E(g)}{\bigcirc}\left\{\conn{z}{z'}{\xi^{x_1,\ldots,x_n,y_1,\ldots,y_{r(g)}}}\right\}\right)
    =  \pla\left(\underset{\left\{z,z'\right\}\in E(g)}{\bigcirc}\left\{\conn{z}{z'}{\xi^{z,z'}}\right\}\right).
\end{equation}
Then by the BK inequality \cite[Thm.\ 2.1]{HeyHofLasMat19},
\begin{equation}
    \pla\left(\underset{\left\{z,z'\right\}\in E(g)}{\bigcirc}\left\{\conn{z}{z'}{\xi^{z,z'}}\right\}\right) \leq \prod_{\left\{z,z'\right\}\in E(g)}\pla\left(\conn{z}{z'}{\xi^{z,z'}}\right)
\end{equation}
This then produces
\begin{multline}
    \tau_{n,\lambda}\left(x_1,\ldots,x_n\right)  \leq \sum_{g\in G_n}\lambda^{r(g)}\int_{\left(\HypDim\right)^{r(g)}} \prod_{\left\{z,z'\right\}\in E(g)}\pla\left(\conn{z}{z'}{\xi^{z,z'}}\right) \dd y_1\ldots \dd y_{r(g)}\\
     = T_{n,\lambda}\left(x_1,\ldots,x_n\right)\label{eqn:TreeGraphInequality}
\end{multline}
as required.
\end{proof}

\begin{lemma}
\label{lem:nclusterUpper}
    For all $n\geq 1$, there exists $C_n <\infty$ such that
    \begin{equation}
        \ela\left[\#\C\left(\orig,\xi^{\orig}\right)^n\right] \leq C_n \chi\left(\lambda\right)^{2n-1}
    \end{equation}
    for all $\lambda\geq0$.
\end{lemma}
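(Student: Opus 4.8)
The plan is to bound the $n$-th moment of $\#\C(\orig,\xi^\orig)$ by expanding it into the $n$-point function and then applying the tree-graph inequality of Lemma~\ref{lem:nPointvsTree}, after which everything reduces to counting Greg trees and bounding each resulting diagram by a power of $\chi(\lambda)$. First I would use Mecke's formula (applied $n-1$ times) to write
\begin{equation}
    \ela\left[\#\C\left(\orig,\xi^{\orig}\right)^n\right] = \sum_{k=1}^{n}\stirling{n}{k}\,\lambda^{k-1}\int_{\left(\HypDim\right)^{k-1}}\tau_{k,\lambda}\left(\orig,x_2,\ldots,x_k\right)\,\dd x_2\ldots\dd x_k,
\end{equation}
where $\stirling{n}{k}$ are the Stirling numbers of the second kind coming from the combinatorial identity $m^n = \sum_k \stirling{n}{k} m(m-1)\cdots(m-k+1)$ together with the fact that $\ela[m(m-1)\cdots(m-k+1)\,\text{for }\#\C] = \lambda^{k-1}\int \tau_{k,\lambda}$ (the $k=1$ term being just $\chi(\lambda)/\lambda\cdot\lambda = \chi(\lambda)$, with the convention that $\tau_{1,\lambda}\equiv 1$). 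Since $\chi(\lambda)\ge 1$ and $2k-1\le 2n-1$ for $k\le n$, it suffices to show that for each $k$, $\lambda^{k-1}\int_{(\HypDim)^{k-1}}\tau_{k,\lambda}(\orig,x_2,\ldots,x_k)\,\dd x_{[2:k]} \le c_k\,\chi(\lambda)^{2k-1}$.

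The key step is then to invoke Lemma~\ref{lem:nPointvsTree}, giving $\tau_{k,\lambda} \le T_{k,\lambda} = \sum_{g\in G_k}\lambda^{r(g)}\int \prod_{\{z,z'\}\in E(g)}\tlam(z,z')$. Integrating over $x_2,\ldots,x_k$ and multiplying by $\lambda^{k-1}$, for each fixed Greg tree $g$ with $k$ labelled and $r(g)$ red vertices and edge set $E(g)$ of size $|E(g)|$, we get
\begin{equation}
    \lambda^{k-1+r(g)}\int_{\left(\HypDim\right)^{k-1+r(g)}}\prod_{\left\{z,z'\right\}\in E(g)}\tlam\left(z,z'\right)\,\dd(\text{all free variables}),
\end{equation}
where $\orig$ is held fixed. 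A spanning tree on $k+r(g)$ vertices with $\orig$ pinned has $k+r(g)-1$ free spatial variables and $|E(g)| = k+r(g)-1$ edges. The standard way to bound such a tree integral is to integrate out the free variables from the leaves inward: each leaf integration contributes a factor $\lambda\int_\HypDim\tlam(\cdot,\cdot)\,\dd(\cdot) = \chi(\lambda)-1 \le \chi(\lambda)$ (by the identity $\chi(\lambda)=1+\lambda\int\tlam(\orig,x)\dd x$ and translation invariance), and after removing a leaf the remaining graph is again a tree, so one proceeds inductively. This yields a bound of $\chi(\lambda)^{k+r(g)-1}$ for the diagram associated to $g$. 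Using that every Greg tree with $k$ labelled vertices has $r(g)\le k-2$ red vertices (which follows from the iterative construction: each of the $k-2$ steps beyond the base case adds at most one red vertex, or from the fact that a tree where all red vertices have degree $\ge 3$ and blue vertices have degree $\ge 1$ satisfies $\sum \deg \ge k + 3r(g) = 2(k+r(g)-1)$, i.e. $r(g)\le k-2$), we obtain $\chi(\lambda)^{k+r(g)-1}\le \chi(\lambda)^{2k-3}\le\chi(\lambda)^{2k-1}$. Summing over the finitely many ($N_k<\infty$) Greg trees in $G_k$ and then over $k=1,\ldots,n$ with the Stirling weights produces the claimed constant $C_n = \sum_{k=1}^n \stirling{n}{k} N_k$ (with $N_1:=1$), completing the proof.

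\textbf{Main obstacle.} The delicate point is justifying the leaf-pruning integration bound rigorously: one must verify that after integrating out a leaf variable $y$ appearing in a single edge $\tlam(y,z)$, the factor $\lambda\int_\HypDim\tlam(y,z)\,\dd y$ is genuinely $\le\chi(\lambda)$ uniformly (using translation invariance to move $z$ to $\orig$), and that the residual integrand is still a product of two-point functions over a tree on the remaining vertices with $\orig$ still pinned — so the induction closes. One also needs $\lambda<\infty$ and that all these integrals are finite, which holds for $\lambda<\lambda_c$ since then $\chi(\lambda)<\infty$; for $\lambda\ge\lambda_c$ the inequality is trivial as both sides are $+\infty$ (or the statement is read with the convention $\chi(\lambda)=\infty$). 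A secondary bookkeeping point is confirming the exact exponent: one should double-check that the worst case $r(g)=k-2$ indeed gives exponent $2k-3\le 2n-1$, so the stated exponent $2n-1$ is (comfortably) correct, matching the $n=1$ case where $\ela[\#\C]=\chi(\lambda)=\chi(\lambda)^{1}$.
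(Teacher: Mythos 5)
Your proposal is correct and follows essentially the same route as the paper: expand the moment into $k$-point functions via Mecke's formula, apply the tree-graph bound of Lemma~\ref{lem:nPointvsTree}, evaluate each Greg-tree diagram by peeling off leaves so that every edge contributes a factor $\lambda\int_\HypDim\tlam(\orig,z)\,\dd z=\chi(\lambda)-1\leq\chi(\lambda)$, and use the edge-count bound together with the finiteness of the number of Greg trees. The only (harmless) deviations are bookkeeping ones — the paper uses a binomial expansion of $\bigl(1+\#\{x\in\eta:\conn{\orig}{x}{\xi^\orig}\}\bigr)^n$ rather than Stirling numbers, and your identity $\ela[(\#\C)(\#\C-1)\cdots]=\lambda^{k-1}\int\tau_{k,\lambda}$ should really be the factorial moment of $\#\C-1$ since the cluster contains $\orig$ — neither of which affects the power $\chi(\lambda)^{2n-1}$, only the constant $C_n$.
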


\begin{proof}    
    By a binomial expansion and Mecke's formula,
    \begin{align}
        \ela\left[\#\C\left(\orig,\xi^{\orig}\right)^n\right] &= \ela\left[\left(1+\sum_{x\in\eta}\Id_{\left\{\conn{\orig}{x}{\xi^{\orig}}\right\}}\right)^n\right]\nonumber\\
        &= 1+ \sum_{k=1}^n \binom{n}{k} \ela\left[\sum_{\left(x_1,\ldots,x_k\right)\in\eta^{(k)}}\Id_{\left\{\orig\longleftrightarrow x_1 \longleftrightarrow \ldots \longleftrightarrow x_k \text{ in }\xi^\orig\right\}}\right]\nonumber\\
        &= 1  + \sum^n_{k=1}\binom{n}{k}\lambda^k\int_{\left(\HypDim\right)^k}\tau_{k+1,\lambda}\left(\orig,x_1,\ldots,x_k\right)\dd x_1\ldots \dd x_k.\label{eqn:NMomentEquality}
    \end{align}
From Lemma~\ref{lem:nPointvsTree} we therefore have
\begin{equation}
    \ela\left[\#\C\left(\orig,\xi^{\orig}\right)^n\right] \leq 1  + \sum^n_{k=1}\binom{n}{k}\lambda^k\int_{\left(\HypDim\right)^k}T_{k+1,\lambda}\left(\orig,x_1,\ldots,x_k\right)\dd x_1\ldots \dd x_k.
\end{equation}

By translation invariance and `peeling off' leaves from the Greg trees $g\in G_{k+1}$ in definition of $T_{k+1,\lambda}$, we can arrive at
\begin{multline}
    \lambda^k\int_{\left(\HypDim\right)^k}T_{k+1,\lambda}\left(\orig,x_1,\ldots,x_k\right) \dd x_1\ldots \dd x_k  = \sum_{g\in G_{k+1}}\lambda^{k+r(g)}\left(\int_{\HypDim}\tlam\left(\orig,z\right)\dd z\right)^{\#E(g)}\\
    = \sum_{g\in G_{k+1}}\lambda^{\#E(g)}\left(\int_{\HypDim}\tlam\left(\orig,z\right)\dd z\right)^{\#E(g)},
\end{multline}
where we have used the fact that the Greg trees we are summing over have $k+1+r(g)$ vertices in total and (since they are trees) will have $\# E(g)=k+r(g)$. Mecke's formula then leads to
\begin{multline}
    \sum_{g\in G_{k+1}}\lambda^{\#E(g)}\left(\int_{\HypDim}\tlam\left(\orig,z\right)\dd z\right)^{\#E(g)} = \sum_{g\in G_{k+1}}\left(\ela\left[\#\C\left(\orig,\xi^{\orig}\right)\right]-1\right)^{\#E(g)}\\
    \leq\sum_{g\in G_{k+1}}\ela\left[\#\C\left(\orig,\xi^{\orig}\right)\right]^{\#E(g)}.
\end{multline}
Recall that from the iterative construction of Greg trees that for any $k\geq 1$ and $g\in G_{k+1}$, we have $\# E(g) \leq 2k-1$. Since $\ela\left[\#\C\left(\orig,\xi^{\orig}\right)\right]\geq 1$, we therefore have
\begin{equation}
    \lambda^k\int_{\left(\HypDim\right)^k}T_{k+1,\lambda}\left(\orig,x_1,\ldots,x_k\right) \dd x_1\ldots \dd x_k \leq N_{k+1}\ela\left[\#\C\left(\orig,\xi^{\orig}\right)\right]^{2k-1}.
\end{equation}

Combining this inequality with \eqref{eqn:NMomentEquality} and \eqref{eqn:TreeGraphInequality} then gives
\begin{equation}
    \ela\left[\#\C\left(\orig,\xi^{\orig}\right)^n\right] \leq 1 + \sum^n_{k=1}\binom{n}{k}N_{k+1}\ela\left[\#\C\left(\orig,\xi^{\orig}\right)\right]^{2k-1} \leq \left(n+1\right)!N_{n+1}\ela\left[\#\C\left(\orig,\xi^{\orig}\right)\right]^{2n-1},
\end{equation}
as required.
\end{proof}

This will be sufficient for the upper bound on $\ela\left[\#\C\left(\orig,\xi^{\orig}\right)^n\right]$. We need to prepare one more lemma to get our lower bound.

\begin{lemma}
\label{lem:SecondMomentLowerBound}
    For all $\lambda\in\left(0,\lambda_T\right)$,
    \begin{equation}
        \ela\left[\#\C\left(\orig,\xi^{\orig}\right)^2\right] \geq \frac{\lambda^2}{\chi\left(\lambda\right)}\left(\frac{\dd \chi}{\dd \lambda}\right)^2.
    \end{equation}
\end{lemma}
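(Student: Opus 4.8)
The plan is to bound $\frac{\dd\chi}{\dd\lambda}$ from above by $\lambda^{-1}\sqrt{\chi}\,\sqrt{\ela[\#\C(\orig,\xi^{\orig})^2]}$; squaring and rearranging then gives the claim. By Mecke's formula, $\chi(\lambda)=1+\lambda\int_{\HypDim}\tlam(\orig,x)\,\dd x$, so differentiating and inserting \eqref{eqn:integralDerivative} of Lemma~\ref{lem:Two-point-derivatives} gives
\[
\lambda\frac{\dd\chi}{\dd\lambda}=(\chi-1)+\lambda^2\int_{\HypDim}\int_{\HypDim}\pla\big(u\in\piv{\orig,x;\xi^{\orig,x}}\big)\,\dd u\,\dd x.
\]
Running Mecke's formula on the inner $u$-integral, the double integral becomes $\lambda\int_{\HypDim}\ela\big[\Id_{\{\orig\longleftrightarrow x\text{ in }\xi^{\orig,x}\}}\,N_{\orig,x}\big]\,\dd x$, where $N_{\orig,x}$ is the number of Poisson points that are pivotal (cut vertices) for $\{\orig\longleftrightarrow x\}$; Mecke's formula likewise gives $\chi-1=\lambda\int_{\HypDim}\ela\big[\Id_{\{\orig\longleftrightarrow x\text{ in }\xi^{\orig,x}\}}\big]\,\dd x$ and $\ela[\#\C(\orig,\xi^\orig)^2]-\chi=\lambda\int_{\HypDim}\ela\big[\Id_{\{\orig\longleftrightarrow x\text{ in }\xi^{\orig,x}\}}\,\#\C(\orig,\xi^{\orig,x})\big]\,\dd x$, so that all three quantities in the statement are space-integrals of closely related functionals of the cluster joining $\orig$ and $x$.

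The heart of the matter is the estimate, for each fixed $x\in\HypDim$,
\[
\ela\big[\Id_{\{\orig\longleftrightarrow x\}}N_{\orig,x}\big]\le\Big(\ela\big[\Id_{\{\orig\longleftrightarrow x\}}\big]\Big)^{1/2}\Big(\ela\big[\Id_{\{\orig\longleftrightarrow x\}}\,\#\C(\orig,\xi^{\orig,x})\big]\Big)^{1/2},
\]
obtained by cutting $\C(\orig,\xi^{\orig,x})$ along the backbone $\orig\to x$ at its pivotal points: the subclusters hanging off the successive pivotal vertices are vertex-disjoint and, by translation invariance of $\connf$ and the BK inequality, can be decoupled from the geometry of the backbone, and because the pivotal vertices are linearly ordered along the backbone this decoupling can be carried out one vertex at a time -- this is the device used for Bernoulli percolation in \cite{AizNew84,durrett1985thermodynamic} and adapted to the random connection model following \cite{Ngu87}. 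Integrating over $x$ and applying the Cauchy--Schwarz inequality once more, now to the $x$-integral, yields $\lambda\frac{\dd\chi}{\dd\lambda}-(\chi-1)\le\sqrt{\chi-1}\,\sqrt{\ela[\#\C(\orig,\xi^\orig)^2]-\chi}$. Combining this with the differential inequality $\frac{\dd\chi}{\dd\lambda}\le\lambda^{-1}\chi^2$ of Section~\ref{sec-MFbounds} (to absorb the leading $(\chi-1)$ and the cross term) together with the trivial bounds $\ela[\#\C(\orig,\xi^\orig)^2]\ge\chi^2\ge\chi\ge1$, one deduces after an elementary but slightly delicate rearrangement that $\big(\lambda\frac{\dd\chi}{\dd\lambda}\big)^2\le\chi\,\ela[\#\C(\orig,\xi^\orig)^2]$, which is the assertion; all the integrals converge because $\lambda<\lambda_T$ keeps $\chi$, and hence $\ela[\#\C(\orig,\xi^\orig)^2]$ by Lemma~\ref{lem:nclusterUpper}, finite.

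The step I expect to be the main obstacle is the pointwise-in-$x$ Cauchy--Schwarz estimate above. The naive bound $N_{\orig,x}\le\#\C(\orig,\xi^{\orig,x})$ only yields $\lambda\frac{\dd\chi}{\dd\lambda}\le\ela[\#\C(\orig,\xi^\orig)^2]$, which is strictly weaker than what is needed; recovering the missing factor of $\sqrt{\chi}$ forces one to use positive association (FKG and BK) together with the one-dimensional structure of the pivotal backbone in an essential way -- exactly as in the classical treatments cited. By contrast, the hyperbolic geometry of $\HypDim$ plays no role in this lemma, entering the overall derivation of the moment exponent only afterwards, through the lower bound $\frac{\dd\chi}{\dd\lambda}\ge K\chi^2$ of Lemma~\ref{lem:susceptibilityderivativeLowerBound}.
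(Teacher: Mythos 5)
Your reduction of the problem --- writing $\lambda\frac{\dd\chi}{\dd\lambda}-(\chi-1)=\lambda\int_{\HypDim}\ela\big[\Id_{\{\orig\longleftrightarrow x\}}N_{\orig,x}\big]\dd x$ via Mecke and the pivotal representation --- is fine, but the estimate you yourself single out as the crux is false, and this is a genuine gap. Test it in the very subcritical regime with $\connf$ of bounded range and $x$ at a distance requiring $k$ ``hops'': the dominant contribution to $\{\orig\longleftrightarrow x\}$ is a single chain $\orig,u_1,\dots,u_{k-1},x$ in which every intermediate Poisson point is pivotal, so $N_{\orig,x}\approx k-1$ on an event of probability $\approx\tlam(\orig,x)$, while $\#\C(\orig,\xi^{\orig,x})\approx k+1$ there. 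Your pointwise bound would then read $(k-1)\tlam(\orig,x)\le\tlam(\orig,x)^{1/2}\big((k+1)\tlam(\orig,x)\big)^{1/2}$, i.e.\ $k-1\le\sqrt{k+1}$, which fails for $k\ge 3$. The desired inequality is only true after integrating over $x$ (distant $x$ carry many pivotal points but exponentially little mass), so no backbone/BK decoupling can deliver it pointwise; BK applied along the pivotal backbone yields bounds of the form $\lambda\int\tlam(\orig,u)\tlam(u,x)\dd u$, which is the route to the \emph{upper} bound $\frac{\dd\chi}{\dd\lambda}\le\lambda^{-1}\chi^2$, not to the present lemma. A secondary problem: even granting the pointwise estimate, the final rearrangement $(\chi-1)+\sqrt{(\chi-1)\left(\ela[\#\C(\orig,\xi^\orig)^2]-\chi\right)}\le\sqrt{\chi\,\ela[\#\C(\orig,\xi^\orig)^2]}$ is false when the two bracketed quantities are large and comparable, so you would at best recover the lemma up to a multiplicative constant (harmless for Proposition~\ref{prop:ClusterMomentExponent}, but not the statement as written).

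For contrast, the paper's proof avoids pivotal points entirely and follows Durrett's thermodynamic argument: it uses the exact Poisson representation $\pla(\#\C(\orig,\xi^\orig)=k+1)=\frac{\lambda^k}{k!}\int f_k(\vec x)\e^{-\lambda g_k(\vec x)}\dd x_1\ldots\dd x_k$, differentiates under the integral to write $\frac{\dd\chi}{\dd\lambda}$ as $\sum_k(k+1)\frac{\lambda^k}{k!}\int\left(\frac{k}{\lambda}-g_k\right)f_k\e^{-\lambda g_k}$, applies Cauchy--Schwarz in $(k,\vec x)$ by splitting off the weight $(k+1)$, and evaluates the remaining factor \emph{exactly} as $\lambda^{-2}(\chi-1)$ using that $\theta$, $\frac{\dd\theta}{\dd\lambda}$ and $\frac{\dd^2\theta}{\dd\lambda^2}$ all vanish for $\lambda<\lambda_T$. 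That covariance-type identity is what replaces the decoupling you were hoping to get from BK, and it is also what produces the clean constant. You are right, however, that hyperbolicity plays no role in this particular lemma.
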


\begin{proof}
    In \cite{caicedo2023critical} the following expression was derived: for all $k\in\N_0$ and $\lambda> 0$
    \begin{equation}
        \pla\left(\#\C\left(\orig,\xi^{\orig}\right) = k+1\right) = \frac{\lambda^k}{k!} \int_{\left(\HypDim\right)^k}f_k\left(\vec{x}\right)\exp\left(-\lambda g_k\left(\vec{x}\right)\right)\dd x_1\ldots \dd x_{k},
    \end{equation}
    where $\vec{x}=\left(x_1,\ldots,x_{k}\right)$,
    \begin{align}
        f_k\left(\vec{x}\right) &:= \sum_{G\in\mathrm{Gr}_{k+1}}\left(\prod_{\left\{i,j\right\}\in E\left(G\right)}\connf\left(x_i,x_j\right)\right)\left(\prod_{\left\{i,j\right\}\not\in E\left(G\right)}\left(1-\connf\left(x_i,x_j\right)\right)\right),\\
        g_k\left(\vec{x}\right) &:= \int_{\HypDim}\left(1 - \prod_{m\in\left\{0,\ldots,k\right\}}\left(1-\connf\left(y,x_m\right)\right)\right) \dd y,
    \end{align}
    $\mathrm{Gr}_{k+1}$ is the set of simple connected graphs on $\left\{0,\ldots, k\right\}$, $E\left(G\right)$ is the set of edges in the graph $G$, and $x_0=\orig$. Then
    \begin{align}
        \ela\left[\#\C\left(\orig,\xi^{\orig}\right) \;\middle|\;\#\C\left(\orig,\xi^{\orig}\right)<\infty\right] &= \sum^\infty_{k=0}\left(k+1\right)\frac{\lambda^k}{k!} \int_{\left(\HypDim\right)^k}f_k\left(\vec{x}\right)\exp\left(-\lambda g_k\left(\vec{x}\right)\right)\dd x_1\ldots \dd x_{k}\\
        \frac{\dd}{\dd \lambda} \ela\left[\#\C\left(\orig,\xi^{\orig}\right) \;\middle|\;\#\C\left(\orig,\xi^{\orig}\right)<\infty\right] &= \sum^\infty_{k=0}\left(k+1\right)\frac{\lambda^k}{k!} \int_{\left(\HypDim\right)^k}\left(\frac{k}{\lambda}-g_k\left(\vec{x}\right)\right)f_k\left(\vec{x}\right)\exp\left(-\lambda g_k\left(\vec{x}\right)\right)\dd x_1\ldots \dd x_{k}.
    \end{align}
    We are able to exchange differentiation with the summation and integration for the same reasons as in \cite[Lemma~3.5]{caicedo2023critical} in which they considered the analogous argument for the magnetisation. We omit the details here. By the Cauchy-Schwarz inequality we then get
    \begin{align}
        &\left(\frac{\dd}{\dd \lambda} \ela\left[\#\C\left(\orig,\xi^{\orig}\right) \;\middle|\;\#\C\left(\orig,\xi^{\orig}\right)<\infty\right]\right)^2\nonumber\\
        &\hspace{1cm}\leq \left(\sum^\infty_{k=0}\left(k+1\right)^2\frac{\lambda^k}{k!} \int_{\left(\HypDim\right)^k}f_k\left(\vec{x}\right)\exp\left(-\lambda g_k\left(\vec{x}\right)\right)\dd x_1\ldots \dd x_{k}\right)\nonumber\\
        &\hspace{5cm}\times\left(\sum^\infty_{k=0}\frac{\lambda^k}{k!} \int_{\left(\HypDim\right)^k}\left(\frac{k}{\lambda}-g_k\left(\vec{x}\right)\right)^2f_k\left(\vec{x}\right)\exp\left(-\lambda g_k\left(\vec{x}\right)\right)\dd x_1\ldots \dd x_{k}\right)\nonumber\\
        &\hspace{1cm} =\ela\left[\#\C\left(\orig,\xi^{\orig}\right)^2 \;\middle|\;\#\C\left(\orig,\xi^{\orig}\right)<\infty\right]\nonumber\\
        &\hspace{5cm}\times\left(\sum^\infty_{k=0}\frac{\lambda^k}{k!} \int_{\left(\HypDim\right)^k}\left(\frac{k}{\lambda}-g_k\left(\vec{x}\right)\right)^2f_k\left(\vec{x}\right)\exp\left(-\lambda g_k\left(\vec{x}\right)\right)\dd x_1\ldots \dd x_{k}\right).
    \end{align}
    To manipulate this second factor, observe that
    \begin{equation}
        \theta\left(\lambda\right) = 1 - \sum^\infty_{k=0}\frac{\lambda^k}{k!} \int_{\left(\HypDim\right)^k}f_k\left(\vec{x}\right)\exp\left(-\lambda g_k\left(\vec{x}\right)\right)\dd x_1\ldots \dd x_{k},
    \end{equation}
    and that for $\lambda<\lambda_T$ (so $\theta\left(\lambda\right)=0$)
    \begin{align}
        0 = \frac{\dd \theta}{\dd \lambda} &= - \sum^\infty_{k=0}\frac{\lambda^k}{k!} \int_{\left(\HypDim\right)^k}\left(\frac{k}{\lambda}-g_k\left(\vec{x}\right)\right)f_k\left(\vec{x}\right)\exp\left(-\lambda g_k\left(\vec{x}\right)\right)\dd x_1\ldots \dd x_{k},\\
        0 = \frac{\dd^2 \theta}{\dd \lambda^2} &= \sum^\infty_{k=0}\frac{\lambda^k}{k!} \int_{\left(\HypDim\right)^k}\left(\frac{k}{\lambda^2}\right)f_k\left(\vec{x}\right)\exp\left(-\lambda g_k\left(\vec{x}\right)\right)\dd x_1\ldots \dd x_{k} \nonumber\\
        &\hspace{3cm}- \sum^\infty_{k=0}\frac{\lambda^k}{k!} \int_{\left(\HypDim\right)^k}\left(\frac{k}{\lambda}-g_k\left(\vec{x}\right)\right)^2f_k\left(\vec{x}\right)\exp\left(-\lambda g_k\left(\vec{x}\right)\right)\dd x_1\ldots \dd x_{k}.
    \end{align}
    That is, for $\lambda<\lambda_T$
    \begin{multline}
        \sum^\infty_{k=0}\frac{\lambda^k}{k!} \int_{\left(\HypDim\right)^k}\left(\frac{k}{\lambda}-g_k\left(\vec{x}\right)\right)^2f_k\left(\vec{x}\right)\exp\left(-\lambda g_k\left(\vec{x}\right)\right)\dd x_1\ldots \dd x_{k}\\
        = \frac{1}{\lambda^2}\sum^\infty_{k=0}k\pla\left(\#\C\left(\orig,\xi^{\orig}\right) = k+1\right) = \frac{1}{\lambda^2}\left(\ela\left[\#\C\left(\orig,\xi^{\orig}\right) \;\middle|\;\#\C\left(\orig,\xi^{\orig}\right)<\infty\right] -1\right).
    \end{multline}
    Since $\lambda<\lambda_T$ implies $\pla\left(\#\C\left(\orig,\xi^{\orig}\right)<\infty\right) =1$, we now have
    \begin{equation}
        \left(\frac{\dd \chi}{\dd \lambda}\right)^2 \leq \frac{1}{\lambda^2}\chi\left(\lambda\right)\ela\left[\#\C\left(\orig,\xi^{\orig}\right)^2\right].
    \end{equation}
    Rearranging this then recovers the result.
\end{proof}

The following proposition proves Theorem~\ref{thm:CritExponents}\ref{thm:CritExponents - SusceptibilityMoment}, and therefore completes the entire proof of Theorem~\ref{thm:CritExponents}.
\begin{prop}
\label{prop:ClusterMomentExponent}
    For each $n\geq 1$, there exist $0<C_n\leq C_n'<\infty$ such that
    \begin{equation}
        C_n\left(\lambda_T-\lambda\right)^{1-2n} \leq \ela\left[\#\C\left(\orig,\xi^{\orig}\right)^n\right] \leq C_n'\left(\lambda_T-\lambda\right)^{1-2n}
    \end{equation}
    for all $\lambda<\lambda_T$.
\end{prop}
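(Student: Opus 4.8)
The plan is to get the upper bound for free from the tree-graph estimate, and to obtain the lower bound by an induction on $n$ whose base case at $n=2$ is supplied by the second-moment inequality of Lemma~\ref{lem:SecondMomentLowerBound}. For the upper bound, Lemma~\ref{lem:nclusterUpper} gives $\ela[\#\C(\orig,\xi^\orig)^n]\le C_n\chi(\lambda)^{2n-1}$ for all $\lambda\ge0$, and the susceptibility upper bound already established in Theorem~\ref{thm:CritExponents}\ref{thm:CritExponents - Susceptibility} gives $\chi(\lambda)\le C'(\lambda_T-\lambda)^{-1}$ for all $\lambda<\lambda_T$. Multiplying, $\ela[\#\C(\orig,\xi^\orig)^n]\le C_n(C')^{2n-1}(\lambda_T-\lambda)^{1-2n}$, which is the claimed upper bound with $C_n':=C_n(C')^{2n-1}$.

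\textbf{Lower bound: reduction and base cases.} For $\lambda$ bounded away from $\lambda_T$, say $\lambda\le\lambda_T-\varepsilon_0$, the crude bound $\#\C(\orig,\xi^\orig)\ge1$ together with $1-2n<0$ gives $\ela[\#\C(\orig,\xi^\orig)^n]\ge1=\varepsilon_0^{2n-1}\varepsilon_0^{1-2n}\ge\varepsilon_0^{2n-1}(\lambda_T-\lambda)^{1-2n}$, so it suffices to prove the lower bound on a left-neighbourhood of $\lambda_T$ (the final $C_n$ is the minimum of the two constants). For $n=1$, $\ela[\#\C(\orig,\xi^\orig)]=\chi(\lambda)\ge C(\lambda_T-\lambda)^{-1}$ by Theorem~\ref{thm:CritExponents}\ref{thm:CritExponents - Susceptibility}. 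For $n=2$, recall $\lambda_c=\lambda_T$ from Theorem~\ref{thm:CritExponents}\ref{thm:CritExponents - Percolation}, so Lemma~\ref{lem:susceptibilityderivativeLowerBound} provides $\varepsilon,K>0$ with $\frac{\dd\chi}{\dd\lambda}\ge K\chi(\lambda)^2$ on $(\lambda_T-\varepsilon,\lambda_T)$; feeding this into Lemma~\ref{lem:SecondMomentLowerBound} yields
\[
\ela[\#\C(\orig,\xi^\orig)^2]\ge\frac{\lambda^2}{\chi(\lambda)}\left(\frac{\dd\chi}{\dd\lambda}\right)^2\ge K^2\lambda^2\chi(\lambda)^3\ge c\,(\lambda_T-\lambda)^{-3}
\]
on $(\lambda_T-\varepsilon,\lambda_T)$, using $\chi(\lambda)\ge C(\lambda_T-\lambda)^{-1}$ and $\lambda$ bounded below there. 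Since $-3=1-2\cdot2$, this is the case $n=2$.

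\textbf{Induction step.} Suppose $n\ge2$ and the lower bound holds at level $n$, i.e. $\ela[\#\C(\orig,\xi^\orig)^n]\ge C_n(\lambda_T-\lambda)^{1-2n}$ near $\lambda_T$; combine it with the already-proved (global) upper bound at level $n-1$, namely $\ela[\#\C(\orig,\xi^\orig)^{n-1}]\le C_{n-1}'(\lambda_T-\lambda)^{3-2n}$. The Cauchy--Schwarz inequality gives $\ela[\#\C(\orig,\xi^\orig)^n]^2\le\ela[\#\C(\orig,\xi^\orig)^{n+1}]\,\ela[\#\C(\orig,\xi^\orig)^{n-1}]$, hence
\[
\ela[\#\C(\orig,\xi^\orig)^{n+1}]\ge\frac{\ela[\#\C(\orig,\xi^\orig)^n]^2}{\ela[\#\C(\orig,\xi^\orig)^{n-1}]}\ge\frac{C_n^2}{C_{n-1}'}\,(\lambda_T-\lambda)^{2(1-2n)-(3-2n)}=\frac{C_n^2}{C_{n-1}'}\,(\lambda_T-\lambda)^{-1-2n},
\]
and $-1-2n=1-2(n+1)$, which closes the induction. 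Matching the resulting near-$\lambda_T$ bounds with the trivial bound on the complementary compact range (and shrinking constants) gives the assertion for all $\lambda<\lambda_T$ and all $n\ge1$.

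\textbf{Main obstacle.} The only genuinely delicate point is seeding the induction. Cauchy--Schwarz applied to the levels $n=0,1$ alone would only give $\ela[\#\C(\orig,\xi^\orig)^2]\gtrsim\chi(\lambda)^2\asymp(\lambda_T-\lambda)^{-2}$, one power short of the required $(\lambda_T-\lambda)^{-3}$; the missing power is exactly the content of Lemma~\ref{lem:SecondMomentLowerBound} combined with the differential inequality $\frac{\dd\chi}{\dd\lambda}\ge K\chi^2$ of Lemma~\ref{lem:susceptibilityderivativeLowerBound}, which together pin down $\Delta=2$ at $n=2$. Once that case is in hand, the remaining tower of moments follows mechanically from Cauchy--Schwarz and the tree-graph upper bound; the secondary bookkeeping—that Lemmas~\ref{lem:susceptibilityderivativeLowerBound} and~\ref{lem:SecondMomentLowerBound} are only valid near or below $\lambda_T$, so one must splice in $\#\C\ge1$ on the complementary range—is harmless.
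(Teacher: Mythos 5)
Your proof is correct and takes essentially the same route as the paper: the upper bound via Lemma~\ref{lem:nclusterUpper} together with the susceptibility upper bound, and the lower bound by combining Cauchy--Schwarz log-convexity of the moments with Lemmas~\ref{lem:susceptibilityderivativeLowerBound} and~\ref{lem:SecondMomentLowerBound}. The only cosmetic difference is that the paper telescopes the monotone chain of ratios $\ela\left[\#\C\left(\orig,\xi^{\orig}\right)^{k}\right]/\ela\left[\#\C\left(\orig,\xi^{\orig}\right)^{k-1}\right]\geq S(\lambda)\geq K'\chi(\lambda)^2$ using only lower-bound information, whereas you close an induction by feeding the upper bound at level $n-1$ into Cauchy--Schwarz; both arguments are valid.
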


\begin{proof}
    The upper bound directly follows from Lemma~\ref{lem:nclusterUpper} and the upper bound in Theorem~\ref{thm:CritExponents}\ref{thm:CritExponents - Susceptibility}: for all $\lambda<\lambda_T$ and $n\geq 1$
    \begin{equation}
        \ela\left[\#\C\left(\orig,\xi^{\orig}\right)^n\right] \leq C_n\left(C'\left(\lambda_T-\lambda\right)^{-1}\right)^{2n-1}.
    \end{equation}

    For the lower bound, first observe that by the Cauchy-Schwarz inequality
    \begin{multline}
        \left(\ela\left[\#\C\left(\orig,\xi^{\orig}\right)^{n-1}\right]\right)^2 = \left(\ela\left[\#\C\left(\orig,\xi^{\orig}\right)^{\frac{n}{2}}\#\C\left(\orig,\xi^{\orig}\right)^{\frac{n}{2}-1}\right]\right)^2\\
        \leq \ela\left[\#\C\left(\orig,\xi^{\orig}\right)^n\right]\ela\left[\#\C\left(\orig,\xi^{\orig}\right)^{n-2}\right]
    \end{multline}
    for all $n\geq 1$ and $\lambda\geq 0$ (recall $\#\C\left(\orig,\xi^{\orig}\right)\geq 1$ everywhere for all $\lambda\geq 0$). Therefore by induction
    \begin{equation}
        \frac{\ela\Big[\#\C\left(\orig,\xi^{\orig}\right)^{n{}}\Big]}{\ela\left[\#\C\left(\orig,\xi^{\orig}\right)^{n-1}\right]} \geq \frac{\ela\left[\#\C\left(\orig,\xi^{\orig}\right)^{n-1}\right]}{\ela\left[\#\C\left(\orig,\xi^{\orig}\right)^{n-2}\right]}\geq \ldots \geq \frac{\ela\left[\#\C\left(\orig,\xi^{\orig}\right)^2\right]}{\ela\Big[\#\C\left(\orig,\xi^{\orig}\right)\Big]}=: S\left(\lambda\right),
    \end{equation}
    and
    \begin{equation}
    \label{eqn:nClusterLowerbound}
        \ela\left[\#\C\left(\orig,\xi^{\orig}\right)^n\right] \geq \chi\left(\lambda\right) S\left(\lambda\right)^{n-1}
    \end{equation}
    for all $n\geq 1$ and $\lambda\geq 0$.

    From Lemma~\ref{lem:susceptibilityderivativeLowerBound} and Lemma~\ref{lem:SecondMomentLowerBound}, we know there exists $K'<\infty$ such that for $\lambda<\lambda_T$
    \begin{equation}
        S\left(\lambda\right) \geq K'\chi\left(\lambda\right)^2.
    \end{equation}
    Therefore the lower bound in Theorem~\ref{thm:CritExponents}\ref{thm:CritExponents - Susceptibility} and \eqref{eqn:nClusterLowerbound} imply that for each $n\geq 1$ there exists $C_n>0$ such that
    \begin{equation}
        \ela\left[\#\C\left(\orig,\xi^{\orig}\right)^n\right] \geq C_n \left(\lambda_T-\lambda\right)^{-1}\left(\left(\lambda_T-\lambda\right)^{-2}\right)^{n-1} = C_n\left(\lambda_T-\lambda\right)^{1-2n}
    \end{equation}
    for all $\lambda<\lambda_T$.
\end{proof}

\begin{remark}
    Observe that both the upper and lower bounds in Proposition~\ref{prop:ClusterMomentExponent} made use of the hyperbolic space. For the upper bound this came via the upper bound in Theorem~\ref{thm:CritExponents}\ref{thm:CritExponents - Susceptibility}, and for the lower bound this came via the lower bound on the derivative of $\chi\left(\lambda\right)$ in Lemma~\ref{lem:susceptibilityderivativeLowerBound}.
\end{remark}

\section{Clusters in Half-spaces: Proofs}\label{sec:clusterHalfspaceProofs}
Here we prove Proposition~\ref{prop:BoundHalf-SpaceSusceptMagnet} thereby finishing the proof of Theorem~\ref{thm:CritExponents}.
As a first step in the proof, we bound the expected number of vertices in the cluster by the expected hyperbolic area of its $\varepsilon$-halo. 
This step in the proof is based on an analogous argument for the Boolean disc model on $\R^2$ in \cite{roy1990russo}.

\begin{lemma}\label{lem:haloBound}
    Let $d=2,3$. For all $\varepsilon,\lambda^*>0$, there exists a constant $0<K_1(\varepsilon,\lambda^*)<\infty$ such that for all $\lambda\in\left[0,\lambda^*\right]$,
    \begin{equation}
        \ela\left[\#\C\left(\orig,\xi^{\orig}\right)\right] \leq K_1 \ela\left[\habsd{h_\varepsilon\left(\C\left(\orig,\xi^{\orig}\right)\right)}\right]
    \end{equation}
\end{lemma}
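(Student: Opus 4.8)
The goal is to show that the expected number of Poisson points in the cluster $\C(\orig,\xi^\orig)$ is controlled, up to a constant, by the expected hyperbolic volume of its $\varepsilon$-halo $h_\varepsilon(\C(\orig,\xi^\orig))$. The natural route is to bound the number of points by a packing argument: since we can pack only finitely many disjoint $\varepsilon/2$-balls of radius bounded below, any two points of the cluster that are close together cannot both carry a ``private'' volume chunk, but there cannot be too many mutually close points either. More precisely, I would condition on the cluster and split the points of $\C(\orig,\xi^\orig)$ according to whether they are ``isolated at scale $\varepsilon$'' or not, the latter meaning within distance $\varepsilon$ of another cluster point. For each isolated point $x$, the ball $B_{\varepsilon/2}(x)$ lies in the $\varepsilon$-halo and these balls are disjoint, so the number of isolated points is at most $\habsd{h_\varepsilon(\C)}/\habsd{B_{\varepsilon/2}(\orig)}$ by translation invariance of the volume measure. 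This already bounds the isolated part by a constant times the halo volume.

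\textbf{Handling the clustered points.} The points that are \emph{not} isolated at scale $\varepsilon$ are harder: in a small region of space there could in principle be arbitrarily many Poisson points. Here is where assumption \eqref{eqn:longdistanceassumption} and the structure of the RCM help. The idea is that if many Poisson points accumulate in a region of diameter $\varepsilon$, the expected cost of connecting them all to $\orig$ is already accounted for, because each such point contributes at most $1$ to the count but the expected number of points in any bounded region under the Poisson law is finite. Concretely, I would tile $\HypDim$ by a locally finite collection of bounded sets $\{Q_i\}$ (e.g.\ a cover by balls of radius $\varepsilon$ with bounded overlap), and write
\[
\#\C(\orig,\xi^\orig) \;\le\; \sum_i \bigl(\#(\eta\cap Q_i)\bigr)\,\Id_{\{\C(\orig,\xi^\orig)\cap Q_i\neq\emptyset\}}.
\]
Taking expectations and using the Mecke formula together with independence between the Poisson points inside $Q_i$ (beyond the one witnessing the connection event) and the rest, the expected number of \emph{extra} points in $Q_i$ given that $Q_i$ is hit is at most $\lambda^*\habsd{Q_i}+1$, a constant. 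So $\ela[\#\C] \le (\lambda^*\sup_i\habsd{Q_i}+1)\,\ela[\#\{i : Q_i\cap\C\neq\emptyset\}]$, and the number of tiles met by the cluster is comparable to the halo volume divided by a fixed constant, since each met tile $Q_i$ forces a full neighbourhood of volume $\gtrsim \habsd{B_\varepsilon(\orig)}$ into the halo and the tiles have bounded overlap.

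\textbf{Main obstacle.} The delicate point is the uniformity of the constant $K_1$ over $\lambda\in[0,\lambda^*]$ and the interchange of summation and expectation: one must be careful that the bound $\#\C \le \sum_i \#(\eta\cap Q_i)\Id_{\{Q_i\cap\C\neq\emptyset\}}$ holds pointwise (it does, since every cluster point lies in some tile) and that Tonelli applies (all terms nonnegative). The factor ``$+1$'' in ``$\lambda^*\habsd{Q_i}+1$'' is what makes the Mecke computation work: conditioning on the cluster meeting $Q_i$, one designated point of $\eta\cap Q_i$ is ``used up'' but the remaining points of $\eta\cap Q_i$ still form a Poisson process of intensity $\le \lambda^*$ there, independent of the connection event through that designated point — this independence must be argued via the standard splitting of the RCM into the restriction of $\eta$ to $Q_i$ and its complement. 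Finally, translating ``number of tiles hit'' into ``halo volume'' uses only that the tiles have diameter $\le\varepsilon$ (so a hit tile is contained in $h_\varepsilon(\C)$ after enlarging by $\varepsilon$, hence in $h_{2\varepsilon}(\C)$) and bounded overlap; absorbing the $2\varepsilon$ versus $\varepsilon$ discrepancy is harmless since halos are monotone and the lemma is stated with a free constant. I expect the cleanest writeup actually avoids tiles and instead directly partitions cluster points by an $\varepsilon$-net, but the tile version above is the most transparent.
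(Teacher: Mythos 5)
Your argument is correct and is essentially the paper's own: decompose $\HypDim$ into uniformly bounded cells, condition on the Poisson configuration outside a cell $Q$ to show by a Mecke/Poisson computation that $\ela[\#(\C(\orig,\xi^\orig)\cap Q)]$ is at most a constant times $\pla(\C(\orig,\xi^\orig)\cap Q\neq\emptyset)$ --- though the correct constant is of the form $(1+\lambda^*\habsd{Q})\e^{\lambda^*\habsd{Q}}$ rather than your $\lambda^*\habsd{Q}+1$, since lower-bounding the hitting probability by the single-point event costs an extra factor $\e^{-\lambda\habsd{Q}}$ --- and then convert hit cells into halo volume. The only substantive difference is that you use a bounded-overlap cover and a deterministic packing count for the last step, whereas the paper uses an exact regular tiling ($\{3,7\}$ triangles for $d=2$, cubes for $d=3$) and compares the conditional expectations of $\#(\C\cap C)$ and $\habsd{h_\varepsilon(\C)\cap C}$ cell by cell; your variant has the mild advantage of not requiring regular space-fillers, which is precisely the obstruction the paper's remark identifies for extending this lemma to $d\geq 5$.
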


\begin{proof}
    We first state the proof for $\HypTwo$, and then explain the necessary adaptations for $\HypThree$. 
    
    Partition $\HypTwo$ by the regular triangular lattice with seven equilateral triangles meeting at a vertex, where we choose to have $\orig$ contained in the interior of one of the triangular cells. This lattice is denoted by the Schl{\"a}fli symbol $\left\{3,7\right\}$ (see Figure~\ref{fig:Schlaefi37lattice}). Let $C$ denote a cell in this lattice, and note that $\habs{C}=\frac{\pi}{7}$ (from Lemma~\ref{lem:areaoftriangles}) and the side length of this cell $L=2\arcosh\big(\frac{1}{2\sin\frac{\pi}{7}}\big)$ (by the second cosine rule - see Lemma~\ref{lem:secondcosinerule}). Recall that $\eta_{\HypTwo\setminus C}$ denotes the vertex Poisson point process outside the cell $C$, and $\C\big(\orig,\xi^\orig_{\HypTwo\setminus C}\big)$ denotes the cluster of $\orig$ that is formed without using any vertices in $C$ (except perhaps $\orig$). 
    
    For cells $C$ not containing $\orig$, we have
    \begin{align}
        &\ela\left[\#\left(\C\left(\orig,\xi^\orig\right)\cap C\right) \mid\eta_{\HypTwo\setminus C}\right] \leq \sum_{k=1}^\infty k\pla\left(\#\eta_C=k \text{ and }\exists y\in\eta_C\colon y\sim \C\left(\orig,\xi^\orig_{\HypTwo\setminus C}\right)\mid \eta_{\HypTwo\setminus C}\right) \nonumber\\
        &\hspace{2cm} = e^{-\lambda\habs{C}}\sum_{k=1}^\infty \frac{k}{k!}\lambda^k\habs{C}^k\left(1-\left(1-\frac{1}{\habs{C}}\int_C\pla\left(x\sim\C\left(\orig,\xi^\orig_{\HypTwo\setminus C}\right)\mid \eta_{\HypTwo\setminus C}\right)\dd x\right)^k\right)\nonumber\\
        &\hspace{2cm}\leq e^{-\lambda\habs{C}}\sum_{k=1}^\infty \frac{k}{k!}\lambda^k\habs{C}^k\times \frac{k}{\habs{C}}\int_C\pla\left(x\sim\C\left(\orig,\xi^\orig_{\HypTwo\setminus C}\right)\mid \eta_{\HypTwo\setminus C}\right)\dd x \nonumber\\
        &\hspace{2cm}= \lambda\habs{C}\left(\lambda\habs{C}+1\right)\left(\frac{1}{\habs{C}}\int_C\pla\left(x\sim\C\left(\orig,\xi^\orig_{\HypTwo\setminus C}\right)\mid \eta_{\HypTwo\setminus C}\right)\dd x \right).
    \end{align}
    If a cell $C$ contains a vertex we want to find the minimal area this vertex can contribute {to the $\varepsilon$-halo}: $v_\varepsilon:=\inf_{x\in C}\habs{B_\varepsilon(x)\cap C}$. If $\varepsilon$ is sufficiently small then $v_\varepsilon$ is found by placing $x$ in a corner of the triangle, in which case the intersection is $\frac{1}{7}$ of the $\varepsilon$-ball ($v_\varepsilon=\frac{4\pi}{7}\left(\sinh \varepsilon\right)^2$). This fails once $\varepsilon$ is big enough to reach outside the far side of the triangle. By the hyperbolic sine rule, this distance $l$ between a vertex and the far side satisfies $\sinh l = \sin\left(\frac{2\pi}{7}\right)\sinh L$. In such a case, we simply truncate the $\varepsilon$-ball to the $l$-ball. Therefore
    \begin{equation}
        v_\varepsilon=\frac{4\pi}{7}\left(\min\left\{\sinh \varepsilon, \sin\left(\frac{2\pi}{7}\right)\sinh L\right\}\right)^2.
    \end{equation}
    By restricting to the event that there is exactly one vertex in $C$, we therefore have
    \begin{equation}
        \ela\left[\habs{h_\varepsilon\left(\C\left(\orig,\xi^\orig\right)\right)\cap C} \mid\eta_{\HypTwo\setminus C}\right] \geq v_\varepsilon \lambda\habs{C}\e^{-\lambda\habs{C}}\frac{1}{\habs{C}}\int_C\pla\left(x\sim\C\left(\orig,\xi^\orig_{\HypTwo\setminus C}\right)\mid \eta_{\HypTwo\setminus C}\right)\dd x.
    \end{equation}
    Therefore for $C$ not containing $\orig$, we have
    \begin{equation}
        \ela\left[\#\left(\C\left(\orig,\xi^\orig\right)\cap C\right) \mid\eta_{\HypTwo\setminus C}\right] \leq \frac{1}{v_\varepsilon}\left(\lambda\habs{C}+1\right)\e^{\lambda\habs{C}}\ela\left[\habs{h_\varepsilon\left(\C\left(\orig,\xi^\orig\right)\right)\cap C} \mid\eta_{\HypTwo\setminus C}\right].
    \end{equation}
    If $C$ is the cell containing $\orig$, then we use the trivial bound
    \begin{equation}
        \ela\left[\#\left(\C\left(\orig,\xi^\orig\right)\cap C\right) \mid\eta_{\HypTwo\setminus C}\right] \leq \ela\left[\#\eta^0_C\right] = 1+\lambda\habs{C}.
    \end{equation}

    By taking the expectations over $\eta_{\HypTwo\setminus C}$ and then summing over $C$, we arrive at
    \begin{equation}
        \ela\left[\#\C\left(\orig,\xi^\orig\right)\right] \leq  1+\lambda\habs{C} + \frac{1}{v_\varepsilon}\left(\lambda\habs{C}+1\right)\e^{\lambda\habs{C}}\ela\left[\habs{h_\varepsilon\left(\C\left(\orig,\xi^\orig\right)\right)}\right].
    \end{equation}
    Since $\ela\left[\habs{h_\varepsilon\left(\C\left(\orig,\xi^\orig\right)\right)}\right] \geq 4\pi\left(\sinh\varepsilon\right)^2$, this proves the result.

    For $\HypThree$, we simply change the cells that form the lattice. It is shown in \cite{brandts2024regular} that there is no regular hyperbolic tetrahedral space-filler for $\HypThree$, but the regular hyperbolic cubic space-filler serves our purposes instead.
\end{proof}

\begin{remark}
    We will only require the above result for $d=2,3$, due to limitations in Lemma~\ref{lem:DeterministicBound}. Nevertheless, it should be possible to extend the above argument to higher dimensions. The only complication is in the choice of the cells used to fill the space. \cite{brandts2024regular} show that there are only two bounded regular hyperbolic space-filler cells for $\mathbb{H}^4$ (the hyperbolic $120$ and $600$-cells) and none for $\HypDim$ for $d\geq 5$. For $d=4$, the above argument can just be repeated with the $120$ or $600$-cells, but for $d\geq 5$ one would need to use more than one type of cell (preferably only finitely many types) to fill the space.
\end{remark}

\begin{figure}
    \centering
    \includegraphics[width=0.5\linewidth]{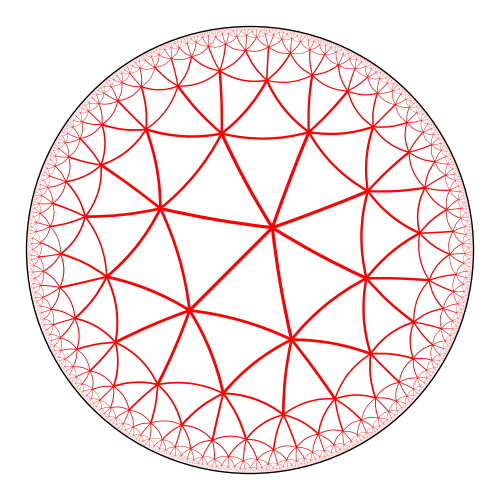}
    \caption{A tiling of $\HypTwo$ (in the Poincar{\'e} disc model) with Schl{\"a}fi symbol $\left\{3,7\right\}$, used in the proof of Lemma~\ref{lem:haloBound}.}
    \label{fig:Schlaefi37lattice}
\end{figure}

Given a set $S\subset\HypDim$ with only finitely many elements, we define $\partial_\varepsilon\convex{S}:=\big\{x\in S:B_\varepsilon(x)\not\subset\convex{S}\big\}$ to be the elements of $S$ whose $\varepsilon$-balls are not contained in $\convex{S}$. Also let $\partial\convex{S} := \bigcap_{\varepsilon>0}\partial_\varepsilon\convex{S}$, the elements of $S$ that lie on the boundary of $\convex{S}$.

\begin{lemma}
\label{lem:DeterministicBound}
    Let $d=2,3$. For any finite $S\subset\HypDim$,
    \begin{equation}
    \label{eqn:keystone}
        \habsd{\convex{S}} \leq 
        \pi\left(\#\partial\convex{S}-2\right)
    \end{equation}
    For all $\varepsilon>0$, there exists a constant $\pi<K_\varepsilon<\infty$ such that for all finite $S\subset\HypTwo$,
    \begin{equation}
        \habsd{h_\varepsilon\left(S\right)} \leq K_\varepsilon \#\partial_\varepsilon \convex{S}.
    \end{equation}
\end{lemma}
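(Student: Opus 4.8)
The plan is to prove the two inequalities separately, with the first (the ``keystone'' bound \eqref{eqn:keystone}) being the geometric heart of the argument and the second following from it by a routine halo-versus-hull comparison.

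For the keystone inequality, I would first reduce to the case where $S$ is in ``general position'' so that $P := \convex{S}$ is a genuine (possibly unbounded, but here finite hence compact) convex polytope with $\#\partial\convex{S}$ vertices; degenerate cases (collinear points, or $\#\partial\convex{S}\le 2$) make the right-hand side non-positive or zero and are handled trivially since $\habsd{\convex{S}}=0$ there. For $d=2$, $P$ is a convex hyperbolic polygon with $k := \#\partial\convex{S}$ vertices; the Gauss--Bonnet theorem on $\HypTwo$ (curvature $-1$) gives $\habst[2]{P} = (k-2)\pi - \sum_{i=1}^k \alpha_i$ where $\alpha_i$ are the interior angles, and since every $\alpha_i > 0$ we get $\habst[2]{P} < (k-2)\pi = \pi(\#\partial\convex{S}-2)$, which is exactly the claim. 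For $d=3$, $P$ is a convex hyperbolic polyhedron; here I would use the hyperbolic analogue of the Gauss--Bonnet / Descartes-type identity — namely that the volume of a convex polyhedron in $\HypThree$ is controlled by its combinatorics. Concretely, triangulate $\partial P$ into geodesic triangles (after triangulating each face), apply the $\HypTwo$ bound face-by-face to control the total boundary area by $\pi$ times (number of boundary triangles minus something), and then bound the volume of $P$ by the area of $\partial P$ using the fact that for a convex body in $\HypThree$ the isoperimetric-type inequality $\mathrm{vol}(P) \le \mathrm{area}(\partial P)$ holds (since in $\HypThree$ the volume of any region is at most its perimeter area — this is the ``linear isoperimetric inequality'' valid on spaces with curvature bounded above by $-1$, cf.\ the cone/coarea argument). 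Combining these, together with Euler's formula $V - E + F = 2$ for the polytope to convert the triangle count back into $\#\partial\convex{S} = V$, yields $\habst[3]{P} \le \pi(\#\partial\convex{S}-2)$ as well, with the constant $\pi$ (rather than something larger) surviving because each step is tight enough. This dimensional dependence — the availability of a clean linear isoperimetric bound relating volume to boundary area, and the fact that boundary area is then linearly controlled by vertex count — is exactly why the lemma is stated only for $d=2,3$.

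For the second inequality, fix $\varepsilon>0$ and write $h_\varepsilon(S) = \bigcup_{x\in S} B_\varepsilon(x)$. Split $S = \partial_\varepsilon\convex{S} \sqcup (S\setminus\partial_\varepsilon\convex{S})$. For $x\in S\setminus\partial_\varepsilon\convex{S}$ we have $B_\varepsilon(x)\subset\convex{S}$ by definition, so $\bigcup_{x\in S\setminus\partial_\varepsilon\convex{S}} B_\varepsilon(x) \subset \convex{S}$, whence
\begin{equation}
    \habsd{h_\varepsilon(S)} \le \habsd{\convex{S}} + \sum_{x\in\partial_\varepsilon\convex{S}} \habsd{B_\varepsilon(x)} = \habsd{\convex{S}} + \#\partial_\varepsilon\convex{S}\cdot\habsd{B_\varepsilon(\orig)},
\end{equation}
using that $\habsd{B_\varepsilon(x)}$ is independent of $x$ by homogeneity. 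Now $\partial\convex{S}\subseteq\partial_\varepsilon\convex{S}$, so $\#\partial\convex{S} \le \#\partial_\varepsilon\convex{S}$, and plugging the keystone bound into the first term gives $\habsd{h_\varepsilon(S)} \le \bigl(\pi + \habsd{B_\varepsilon(\orig)}\bigr)\#\partial_\varepsilon\convex{S}$; setting $K_\varepsilon := \pi + \habsd{B_\varepsilon(\orig)} > \pi$ finishes the proof.

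The main obstacle is the $d=3$ case of the keystone inequality: getting the constant to be exactly $\pi$ requires the right combination of Gauss--Bonnet on the faces, a sharp enough volume-to-surface-area bound in $\HypThree$, and Euler's formula, and one must be careful that unbounded or non-simple convex hulls (which cannot actually occur for finite $S$, but near-degenerate configurations can make the combinatorics subtle) do not spoil the count. Everything else is bookkeeping.
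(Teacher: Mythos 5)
Your $d=2$ argument and your halo-versus-hull comparison coincide with the paper's proof (the paper triangulates the convex polygon into $n-2$ triangles and uses that each hyperbolic triangle has area at most $\pi$, which is your Gauss--Bonnet computation in disguise; the second inequality is handled identically, with the same constant $K_\varepsilon=\pi+\habsd{B_\varepsilon(\orig)}$). For $d=3$ you take a genuinely different route: the paper simply cites Madras and Wu's Lemma~8, which decomposes $\convex{S}$ into at most $2n-4$ tetrahedra and bounds each by the maximal tetrahedron volume $V_3\le\pi/2$ of Haagerup--Munkholm, giving $\pi(n-2)$ directly; you instead triangulate $\partial\convex{S}$ into $2V-4$ geodesic triangles (via Euler's formula), bound each by $\pi$, and pass from boundary area to volume by a linear isoperimetric inequality. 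Your route does recover the constant $\pi$, but only with the \emph{sharp} form of that inequality in $\HypThree$, namely $\mathrm{vol}(\Omega)\le\tfrac{1}{d-1}\mathrm{area}(\partial\Omega)=\tfrac12\mathrm{area}(\partial\Omega)$ (divergence theorem applied to $\nabla r$, whose divergence is $2\coth r\ge 2$); as written you quote it with constant $1$, which yields only $2\pi(\#\partial\convex{S}-2)$. That weaker constant would still suffice for every downstream use of the lemma, but not for the statement as given, so the constant $\tfrac12$ is the one point you must pin down. What your approach buys is a self-contained argument that makes explicit the isoperimetric mechanism behind the restriction to low dimensions, at the cost of being longer than the paper's citation. (Both proofs tacitly assume $\#\partial\convex{S}\ge 2$ so that the right-hand side of \eqref{eqn:keystone} is nonnegative; this is harmless in all applications.)
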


\begin{proof}
    We first prove \eqref{eqn:keystone}. 
    For $d=2$, since $S$ is finite, $\convex{S}$ is a bounded convex polygon. Any bounded convex polygon with $n$ vertices can be viewed as the union of $n-2$ triangles. A hyperbolic triangle can have at most a hyperbolic area of $\pi$, and this produces the first bound.

    For $d=3$, Madras and Wu \cite[Lemma 8]{madras2010trees} show that 
    \begin{equation}
        \habst{\convex{S}}\le V_3 \left(2\#\partial\convex{S}-4\right),
    \end{equation}
    where $V_3$ is the (hyperbolic) volume of a regular tetrahedra, whose volume is bounded above by $\pi/2$, cf. \cite[Prop. 2]{haagerup1981simplices}. We thus arrive at the same upper bound. 

    For the second bound, we first bound
    \begin{equation}
        \habsd{h_\varepsilon\left(S\right)\cap \convex{S}} \leq \habsd{\convex{S}},
    \end{equation}
    and account for the elements of $S$ whose $\varepsilon$-balls are not contained in $\convex{S}$ by bounding
    \begin{equation}
        \habsd{h_\varepsilon\left(S\right)\setminus \convex{S}} \leq \habsd{B_\varepsilon\left(\orig\right)} \#\partial_\varepsilon \convex{S}.
    \end{equation}
    Combining these bounds and using \eqref{eqn:keystone} gives
    \begin{equation}
        \habsd{h_\varepsilon\left(S\right)} \leq \habsd{\convex{S}} + \habsd{B_\varepsilon\left(\orig\right)} \#\partial_\varepsilon \convex{S} \leq \left(\pi + \habsd{B_\varepsilon\left(\orig\right)}\right)\#\partial_\varepsilon \convex{S}.
    \end{equation}
    Note that we have used $\partial \convex{S}\subset \partial_\varepsilon\convex{S}$ here. The result then follows with $K_\varepsilon = \pi+\habsd{B_\varepsilon\left(\orig\right)}$.
\end{proof}

\begin{remark}\label{rem:dge4}
    Finding an appropriate replacement for \eqref{eqn:keystone} is where the argument runs into issues for $d\geq 4$. As noted by \cite{madras2010trees} in their arguments, \cite{grunbaum2003convex} shows that if you consider a class of polytope called \emph{cyclic polytopes} then even in $d=4$ you can find examples with $n$ vertices but $\frac{1}{2}\left(n^2-3n\right)$ `$3$-dimensional facets.'
\end{remark}

We now state two lemmas, which together imply Proposition~\ref{prop:BoundHalf-SpaceSusceptMagnet}.

\begin{lemma}
\label{lem:numboundaryequalsgivenorigboundary}
    For all $d\geq 2$ and $\lambda<\lambda_c$,
    \begin{equation}
    \label{eqn:convHulltoclustersize}
        \ela\left[\#\partial_\varepsilon\convex{\C\left(\orig,\xi^{\orig}\right)}\right] = \ela\left[\#\C\left(\orig,\xi^{\orig}\right)\Id_{\left\{\orig\in\partial_\varepsilon\convex{\C\left(\orig,\xi^{\orig}\right)}\right\}}\right]
    \end{equation}
\end{lemma}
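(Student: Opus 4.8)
The plan is to exploit the translation invariance of the model (via the mass-transport principle / Mecke's formula) to move the ``boundary point'' around the cluster. The key observation is that $\#\partial_\varepsilon\convex{\C}$ literally counts the number of vertices $v$ in $\C\left(\orig,\xi^{\orig}\right)$ whose $\varepsilon$-ball sticks out of $\convex{\C}$; rewriting this count as a sum over $v$ of an indicator, one gets
\[
    \#\partial_\varepsilon\convex{\C\left(\orig,\xi^{\orig}\right)} = \sum_{v\in\C\left(\orig,\xi^{\orig}\right)} \Id_{\left\{v\in\partial_\varepsilon\convex{\C\left(\orig,\xi^{\orig}\right)}\right\}}.
\]
Taking $\ela$ of both sides, the left side is the quantity in \eqref{eqn:convHulltoclustersize}. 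For the right side, I would apply the Mecke equation to the Poisson process $\eta$ (in its Palm form $\eta^\orig$): the expected sum over Poisson points $v$ of a functional $F(v,\eta^{\orig,v})$ can be written as $\orig$ contributing its own term plus $\lambda\int_\HypDim \ela\bigl[F(v,\eta^{\orig,v})\bigr]\dd v$, provided we are careful that $F$ here also involves the extra edge-randomness $\xi$. Concretely, $F(v,\xi^{\orig}) = \Id_{\left\{\conn \orig v {\xi^\orig}\right\}}\Id_{\left\{v\in\partial_\varepsilon\convex{\C\left(\orig,\xi^{\orig}\right)}\right\}}$.

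The second — and crucial — ingredient is a symmetry between the roles of $\orig$ and $v$. When $\conn \orig v {\xi^\orig}$, the clusters $\C\left(\orig,\xi^{\orig}\right)$ and $\C\left(v,\xi^{v}\right)$ coincide as point sets (both equal the common component, with the two marked points both inside), so the event $\left\{v\in\partial_\varepsilon\convex{\C}\right\}$ is the same whether we anchor at $\orig$ or at $v$. Now I would use the translation isometry $t_{v,\orig}$ (which maps $v\mapsto\orig$) together with the invariance \eqref{eq:translInv} of $\connf$ and the invariance of the Poisson intensity measure $\lambda\dd x$ under isometries: this gives
\[
    \ela\Bigl[\Id_{\left\{\conn \orig v {\xi^\orig}\right\}}\Id_{\left\{v\in\partial_\varepsilon\convex{\C\left(v,\xi^{v}\right)}\right\}}\Bigr]
    = \ela\Bigl[\Id_{\left\{\conn {t_{v,\orig}(v)} {t_{v,\orig}(\orig)} {\xi^{\orig}}\right\}}\Id_{\left\{t_{v,\orig}(\orig)\in\partial_\varepsilon\convex{\C\left(\orig,\xi^{\orig}\right)}\right\}}\Bigr],
\]
where I have used that convex hulls and $\varepsilon$-halos commute with isometries, so the pushed-forward boundary condition at $t_{v,\orig}(\orig)$ is exactly ``$\orig$ is $\varepsilon$-boundary of $\C(\orig,\xi^\orig)$''. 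After the change of variables $w = t_{v,\orig}(\orig)$ in the integral $\lambda\int_\HypDim(\cdots)\dd v$ — using that the map $v\mapsto t_{v,\orig}(\orig)$ is a measure-preserving bijection of $\HypDim$ (this is where one checks the hyperbolic geometry: $t_{v,\orig}$ is the translation along the geodesic through $v$ and $\orig$ by distance $\dist{v,\orig}$, so $t_{v,\orig}(\orig)$ is the reflection of $v$ through $\orig$, an isometric involution) — the integral becomes $\lambda\int_\HypDim \ela\bigl[\Id_{\left\{\conn \orig w {\xi^\orig}\right\}}\Id_{\left\{\orig\in\partial_\varepsilon\convex{\C\left(\orig,\xi^{\orig}\right)}\right\}}\bigr]\dd w$, which by Mecke run backwards is exactly $\ela\bigl[\#\C\left(\orig,\xi^{\orig}\right)\Id_{\left\{\orig\in\partial_\varepsilon\convex{\C}\right\}}\bigr]$ minus/plus the $\orig$-term; checking that the stray $\orig$-term on each side matches (it is just $\Id_{\left\{\orig\in\partial_\varepsilon\convex{\{\orig\}\cup\cdots}\right\}}$ in both cases) closes the identity.

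The main obstacle I anticipate is the bookkeeping around the Palm/Mecke manipulation together with the extra edge-randomness: one has to set up a clean version of Mecke's equation for the marked process $\xi$ (or equivalently condition on $\eta$ and handle the edge-randomness separately, as is done elsewhere in the paper) so that the ``self'' term $v=\orig$ is correctly isolated, and one must verify that the event $\left\{v\in\partial_\varepsilon\convex{\C\left(\orig,\xi^{\orig}\right)}\right\}$ is measurable and genuinely depends only on the (unordered) cluster together with the pair of marked points. The geometric input — that $v\mapsto t_{v,\orig}(\orig)$ is a measure-preserving involution of $\HypDim$, and that $\convex{\cdot}$ and $h_\varepsilon(\cdot)$ are equivariant under isometries — is standard but should be stated explicitly since it is what makes the hyperbolic (non-Euclidean) case go through unchanged. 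Everything else is a routine application of invariance plus Tonelli.
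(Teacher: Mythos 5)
Your proposal is correct and rests on the same key idea as the paper's proof: translation invariance makes every vertex of the cluster exchangeable with $\orig$ as the ``boundary'' vertex, so the count $\#\partial_\varepsilon\convex{\C}$ can be traded for $\#\C$ times the indicator that $\orig$ itself lies on the $\varepsilon$-boundary. The only difference is bookkeeping: you implement the exchange via Mecke's formula plus the point-reflection change of variables $v\mapsto t_{v,\orig}(\orig)$, whereas the paper conditions on $\#\C=n$ and symmetrizes the resulting $(n-1)$-fold integral over the cluster's point positions; both are valid and your identification of the measure-preserving involution is the correct geometric input.
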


In the following lemma, $\mathfrak{S}_{d-1}=\frac{2\pi^{d/2}}{\Gamma(d/2)}$ denotes the Lebesgue measure of the whole sphere $S_{d-1}$.
\begin{lemma}\label{lem:rotationFactor}
    For all $d\geq 2$, half-spaces $H\ni\orig$, $\varepsilon\in\left(0,\dist{\partial H,\orig}\right)$, and $\lambda<\lambda_c$,
    \begin{equation}
        \ela\left[\#\C\left(\orig,\xi^{\orig}\right)\Id_{\left\{\orig\in\partial_\varepsilon\convex{\C\left(\orig,\xi^{\orig}\right)}\right\}}\right] \leq \frac{\mathfrak{S}_{d-1}}{\mathfrak{S}_{d-2}}\frac{\ela\left[\#\C\left(\orig,\xi^{\orig}\right) \Id_{\left\{\C\left(\orig,\xi^{\orig}\right)\subset\Mcal_\varepsilon\left( H,\orig\right)\right\}}\right]}{\int^{\theta_\varepsilon\left(H,\orig\right)}_0\left(\sin t\right)^{d-2}\dd t}.
    \end{equation}
\end{lemma}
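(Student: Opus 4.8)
The plan is to run an averaging argument over rotations about $\orig$, using that the law of $\C:=\C\left(\orig,\xi^\orig\right)$ under $\pla$ is invariant under every isometry fixing $\orig$; indeed, by \eqref{eq:translInv} and the discussion after it, $\connf(x,y)$ depends only on $\dist{x,y}$, so the whole model $\xi^\orig$ is rotation-invariant in law and $\#\C$ is a rotation-invariant functional. Write $\nu$ for the normalised Haar measure on the group of rotations of $\HypDim$ about $\orig$, write $v$ for the unit direction at $\orig$ pointing towards the nearest point $\orig_\perp\in\partial H$, and set $\sigma_\varepsilon:=\mathfrak S_{d-2}\int_0^{\theta_\varepsilon(H,\orig)}\left(\sin t\right)^{d-2}\dd t$, the surface measure of a spherical cap of half-angle $\theta_\varepsilon(H,\orig)$ in $S^{d-1}$; note that $\tfrac{\mathfrak S_{d-1}}{\mathfrak S_{d-2}}\cdot\tfrac{1}{\int_0^{\theta_\varepsilon(H,\orig)}(\sin t)^{d-2}\dd t}=\tfrac{\mathfrak S_{d-1}}{\sigma_\varepsilon}$, so the claimed inequality is exactly $\ela[\#\C\,\Id_{\{\orig\in\partial_\varepsilon\convex{\C}\}}]\le\tfrac{\mathfrak S_{d-1}}{\sigma_\varepsilon}\ela[\#\C\,\Id_{\{\C\subseteq\Mcal_\varepsilon(H,\orig)\}}]$. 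Since $\lambda<\lambda_c$, $\#\C<\infty$ a.s., so $\convex{\C}$ is a bounded convex polytope and $\partial_\varepsilon\convex{\C}$ is well defined.

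The geometric input comes first. On the event $\{\orig\in\partial_\varepsilon\convex{\C}\}$ we have $B_\varepsilon(\orig)\not\subseteq\convex{\C}$, and since $\convex{\C}$ is the intersection of all closed half-spaces containing $\C$, there is a closed half-space $M$ with $\C\subseteq M$, $\orig\in M$ and $r_M:=\dist{\partial M,\orig}<\varepsilon$; choose such an $M$ measurably as a function of $\convex{\C}$, and let $u_M\in S^{d-1}$ be the unit direction at $\orig$ of the geodesic from $\orig$ meeting $\partial M$ perpendicularly. For any rotation $\rho$ about $\orig$, the image $\rho M$ is again a half-space with $\orig\in\rho M$ and $\dist{\partial(\rho M),\orig}=r_M<\varepsilon$, whose perpendicular direction is $\rho u_M$; hence $\rho M$ is one of the half-spaces in the union defining $\Mcal_\varepsilon(H,\orig)$ as soon as $\rho M\subseteq H$, and in that case $\rho\C\subseteq\rho M\subseteq\Mcal_\varepsilon(H,\orig)$. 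Thus the task reduces to a lower bound on $\nu\left(\{\rho:\rho M\subseteq H\}\right)$ that is uniform over all half-spaces $M\ni\orig$ with $r_M<\varepsilon$.

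To get that bound, observe that whether $\rho M\subseteq H$ depends only on the direction $\rho u_M$, and that $\mathcal{V}_r:=\{w\in S^{d-1}:\text{the half-space through }\orig\text{ with perpendicular direction }w\text{ at distance }r\text{ lies in }H\}$ is a closed spherical cap centred at $v$, decreasing in $r$ (moving the bounding hyperplane towards $\orig$ shrinks the $\orig$-side half-space). The key claim, read off directly from the construction of $m_\varepsilon(H,\orig)$ and $\theta_\varepsilon(H,\orig)$ and made quantitative by the trigonometric identities of Lemma~\ref{lem:distancecalculation}, is that $\bigcap_{r<\varepsilon}\mathcal{V}_r$ contains the open cap of half-angle $\theta_\varepsilon(H,\orig)$ about $v$, of surface measure $\ge\sigma_\varepsilon$. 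Since the pushforward of $\nu$ under $\rho\mapsto\rho u_M$ is the uniform probability on $S^{d-1}$, and $r_M<\varepsilon$ gives $\mathcal{V}_{r_M}\supseteq\bigcap_{r<\varepsilon}\mathcal{V}_r$, we conclude $\nu\left(\{\rho:\rho M\subseteq H\}\right)=\mathrm{Leb}_{S^{d-1}}(\mathcal{V}_{r_M})/\mathfrak S_{d-1}\ge\sigma_\varepsilon/\mathfrak S_{d-1}$ for every admissible $M$.

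Finally I would assemble the pieces. On $\{\orig\in\partial_\varepsilon\convex{\C}\}$, the containment $\C\subseteq M$ gives $\{\rho:\rho M\subseteq H\}\subseteq\{\rho:\rho\C\subseteq\Mcal_\varepsilon(H,\orig)\}$, so by the previous paragraph $\Id_{\{\orig\in\partial_\varepsilon\convex{\C}\}}\le\frac{\mathfrak S_{d-1}}{\sigma_\varepsilon}\int\Id_{\{\rho\C\subseteq\Mcal_\varepsilon(H,\orig)\}}\,\dd\nu(\rho)$ pointwise (on the complementary event the left side is $0$). Multiplying by $\#\C$, taking $\ela[\cdot]$ and applying Tonelli yields $\ela[\#\C\,\Id_{\{\orig\in\partial_\varepsilon\convex{\C}\}}]\le\frac{\mathfrak S_{d-1}}{\sigma_\varepsilon}\int\ela[\#\C\,\Id_{\{\rho\C\subseteq\Mcal_\varepsilon(H,\orig)\}}]\,\dd\nu(\rho)$; for each fixed $\rho$ the change of variables $\C\mapsto\rho\C$, together with rotation-invariance of the law of $\C$ and $\#(\rho\C)=\#\C$, shows $\ela[\#\C\,\Id_{\{\rho\C\subseteq\Mcal_\varepsilon(H,\orig)\}}]=\ela[\#\C\,\Id_{\{\C\subseteq\Mcal_\varepsilon(H,\orig)\}}]$, independently of $\rho$, and since $\nu$ is a probability measure the integral collapses to the asserted bound. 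The one genuinely delicate step is the geometric claim that $\bigcap_{r<\varepsilon}\mathcal{V}_r$ contains the cap of half-angle $\theta_\varepsilon(H,\orig)$ — that the cone opening of $\Mcal_\varepsilon(H,\orig)$ at $m_\varepsilon$ matches the range of admissible perpendicular directions of half-spaces $M\subseteq H$ with $\dist{\partial M,\orig}<\varepsilon$ — which is precisely what the definitions of $m_\varepsilon$ and $\theta_\varepsilon$ are designed to encode; the rest (rotation invariance, Tonelli, the change of variables) is routine.
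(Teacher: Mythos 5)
Your proof is correct and follows essentially the same route as the paper: on the event $\{\orig\in\partial_\varepsilon\convex{\C}\}$ one fixes a supporting half-space $M\ni\orig$ with $\dist{\partial M,\orig}<\varepsilon$ and averages over rotations about $\orig$, the fraction of rotations carrying $M$ (hence $\C$) into $\Mcal_\varepsilon(H,\orig)$ being the normalised cap measure $\frac{\mathfrak S_{d-2}}{\mathfrak S_{d-1}}\int_0^{\theta_\varepsilon(H,\orig)}(\sin t)^{d-2}\,\dd t$. The only difference is presentational: you package the averaging via Haar measure on the rotation group and a change of variables, whereas the paper writes it out in Poincaré-disc polar coordinates for $d=2$ and asserts the higher-dimensional case is analogous; the geometric identification of the admissible directions with a cap of half-angle $\theta_\varepsilon(H,\orig)$, which you rightly flag as the delicate point, is taken for granted in the paper as well.
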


\begin{proof}[Proof of Proposition~\ref{prop:BoundHalf-SpaceSusceptMagnet}]
    The bound for the susceptibility follows from Lemmas~\ref{lem:haloBound}-\ref{lem:rotationFactor}. The bound for the ghost free susceptibility starts in the same way with Lemmas~\ref{lem:haloBound} and \ref{lem:DeterministicBound}. Repeating the proofs of Lemma~\ref{lem:numboundaryequalsgivenorigboundary} and \ref{lem:rotationFactor} with $\elaq$ in the place of $\ela$ and with the extra $\Id_{\left\{\C\left(\orig,\xi^\orig\right)\cap\Gcal=\emptyset\right\}}$ then produces
    \begin{equation}
        \elaq\left[\#\partial_\varepsilon\convex{\C\left(\orig,\xi^{\orig}\right)}\Id_{\left\{\C\left(\orig,\xi^\orig\right)\cap\Gcal=\emptyset\right\}}\right] = \elaq\left[\#\C\left(\orig,\xi^{\orig}\right)\Id_{\left\{\orig\in\partial_\varepsilon\convex{\C\left(\orig,\xi^{\orig}\right)}\right\}}\Id_{\left\{\C\left(\orig,\xi^\orig\right)\cap\Gcal=\emptyset\right\}}\right]
    \end{equation}
    for all $d\geq 2$, $q\in\left(0,1\right)$, and $\lambda<\lambda_\mathrm{c}$. Similarly, for all $d\geq 2$, half-spaces $H\ni\orig$, $\varepsilon\in\left(0,\dist{\partial H,\orig}\right)$, $q\in\left(0,1\right)$, and $\lambda<\lambda_c$,
    \begin{multline}
        \elaq\left[\#\C\left(\orig,\xi^{\orig}\right)\Id_{\left\{\C\left(\orig,\xi^\orig\right)\cap\Gcal=\emptyset\right\}}\Id_{\left\{\orig\in\partial_\varepsilon\convex{\C\left(\orig,\xi^{\orig}\right)}\right\}}\right] \\
        \leq \frac{\mathfrak{S}_{d-1}}{\mathfrak{S}_{d-2}}\frac{\elaq\left[\#\C\left(\orig,\xi^{\orig}\right) \Id_{\left\{\C\left(\orig,\xi^\orig\right)\cap\Gcal=\emptyset\right\}}\Id_{\left\{\C\left(\orig,\xi^{\orig}\right)\subset\Mcal_\varepsilon\left( H,\orig\right)\right\}}\right]}{\int^{\theta_\varepsilon\left(H,\orig\right)}_0\left(\sin t\right)^{d-2}\dd t}.
    \end{multline}
    These then produce the bound for the ghost free susceptibility with the same constant $\kappa$.

    For the magnetisation bound we relate the magnetisation to the susceptibility. From Lemma~\ref{lem:MagnetPrelim} we have that for $\lambda<\lambda_\mathrm{c}$ and $q\in\left(0,1\right)$
    \begin{equation}
        \left(1-q\right)\frac{\partial}{\partial q}\plaq\left(\conn{\orig}{\Gcal}{\xi^\orig}\right) = \elaq\left[\#\C\left(\orig,\xi^\orig\right)\Id_{\left\{\C\left(\orig,\xi^\orig\right)\cap\Gcal=\emptyset\right\}}\right].
    \end{equation}
    The same idea means that for $\lambda<\lambda_\mathrm{c}$ and $q\in\left(0,1\right)$
    \begin{equation}
        \left(1-q\right)\frac{\partial}{\partial q}\plaq\left(\conn{\orig}{\Gcal}{\xi^\orig},\C\left(\orig,\xi^\orig\right)\subset H\right) = \elaq\left[\#\C\left(\orig,\xi^\orig\right)\Id_{\left\{\C\left(\orig,\xi^\orig\right)\cap\Gcal=\emptyset\right\}}\Id_{\left\{\C\left(\orig,\xi^\orig\right)\subset H\right\}}\right].
    \end{equation}
    In particular, this means that the bound for the expectations of the cluster sizes in the first part of the result imply the differential inequality
    \begin{equation}
        \frac{\partial}{\partial q}\plaq\left(\conn{\orig}{\Gcal}{\xi^\orig}\right) \leq \kappa \frac{\partial}{\partial q}\plaq\left(\conn{\orig}{\Gcal}{\xi^\orig},\C\left(\orig,\xi^\orig\right)\subset H\right).
    \end{equation}
    Since, $\lim_{q\searrow 0}\mathbb{P}_{\lambda,0}\left(\conn{\orig}{\Gcal}{\xi^\orig},\C\left(\orig,\xi^\orig\right)\subset H\right) \leq \lim_{q\searrow 0}\mathbb{P}_{\lambda,0}\left(\conn{\orig}{\Gcal}{\xi^\orig}\right) = \theta\left(\lambda\right)=0$ for all $\lambda<\lambda_c$ (see Lemma~\ref{lem:MagnetPrelim}), we have
    \begin{equation}
        \plaq\left(\conn{\orig}{\Gcal}{\xi^\orig}\right) \leq \kappa \plaq\left(\conn{\orig}{\Gcal}{\xi^\orig},\C\left(\orig,\xi^\orig\right)\subset H\right)
    \end{equation}
    for all $\lambda<\lambda_\mathrm{c}$ and $q\in\left(0,1\right)$.
\end{proof}

It remains to prove Lemmas~\ref{lem:numboundaryequalsgivenorigboundary} and \ref{lem:rotationFactor}.
\begin{proof}[Proof of Lemma~\ref{lem:numboundaryequalsgivenorigboundary}]
In the following, given a finite set $S$ we use $N_\varepsilon(S):= \#\partial_\varepsilon\convex{S}$.
First we expand the left hand side of \eqref{eqn:convHulltoclustersize}, conditioning on the size of the cluster:
    \begin{align}
        &\ela\left[\#\partial_\varepsilon\convex{\C\left(\orig,\xi^\orig\right)}\right]\nonumber\\
        &= \sum_{n=1}^\infty \pla\left(\#\C\left(\orig,\xi^\orig\right)=n\right)\ela\left[\#\partial_\varepsilon\convex{\C\left(\orig,\xi^\orig\right)}\mid \#\C\left(\orig,\xi^\orig\right)=n\right]\nonumber\\
        &= \sum_{n=1}^\infty \pla\left(\#\C\left(\orig,\xi^\orig\right)=n\right) \int_{\left(\HypDim\right)^{n-1}}N_\varepsilon\left(\left\{\orig,x_2,\ldots,x_n\right\}\right)\pla\left(\left\{\orig,x_2,\ldots,x_n\right\}\text{ is a cluster}\right)\dd x_2\ldots \dd x_n.
    \end{align}
    Note that since $\lambda<\lambda_c$, $\pla\left(\#\C\left(\orig,\xi^\orig\right)=\infty\right)=0$.

    We now similarly expand the right hand side:
    \begin{align}
        &\ela\left[\#\C\left(\orig,\xi^{\orig}\right)\Id_{\left\{\orig\in\partial_\varepsilon\convex{\C\left(\orig,\xi^{\orig}\right)}\right\}}\right]\nonumber\\
        &=\sum_{n=1}^\infty \pla\left(\#\C\left(\orig,\xi^\orig\right)=n\right)\ela\left[\#\C\left(\orig,\xi^{\orig}\right)\Id_{\left\{\orig\in\partial_\varepsilon\convex{\C\left(\orig,\xi^{\orig}\right)}\right\}}\mid \#\C\left(\orig,\xi^\orig\right)=n\right]\nonumber\\
        &=\sum_{n=1}^\infty \pla\left(\#\C\left(\orig,\xi^\orig\right)=n\right)n\int_{\left(\HypDim\right)^{n-1}}\Id_{\left\{\orig\in\partial_{\varepsilon}\convex{\left\{\orig,x_2,\ldots,x_n\right\}}\right\}}\pla\left(\left\{\orig,x_2,\ldots,x_n\right\}\text{ is a cluster}\right)\dd x_2\ldots \dd x_n.
    \end{align}
    By translation invariance, we have
    \begin{align}
        &n\int_{\left(\HypDim\right)^{n-1}}\Id_{\left\{\orig\in\partial_{\varepsilon}\convex{\left\{\orig,x_2,\ldots,x_n\right\}}\right\}}\p\left(\left\{\orig,x_2,\ldots,x_n\right\}\text{ is a cluster}\right)\dd x_2\ldots \dd x_n\nonumber\\
        &\hspace{2cm}= \sum^n_{i=1}\int_{\left(\HypDim\right)^{n-1}}\Id_{\left\{x_i\in\partial_{\varepsilon}\convex{\left\{\orig,x_2,\ldots,x_n\right\}}\right\}}\p\left(\left\{\orig,x_2,\ldots,x_n\right\}\text{ is a cluster}\right)\dd x_2\ldots \dd x_n\nonumber\\
        &\hspace{2cm}= \int_{\left(\HypDim\right)^{n-1}}N_\varepsilon\left(\left\{\orig,x_2,\ldots,x_n\right\}\right)\p\left(\left\{\orig,x_2,\ldots,x_n\right\}\text{ is a cluster}\right)\dd x_2\ldots \dd x_n,
    \end{align}
    and therefore the right hand side and the left hand side are equal.
\end{proof}

\begin{proof}[Proof of Lemma~\ref{lem:rotationFactor}]
    The idea is that for each cluster configuration with $\orig\in\partial_\varepsilon\convex{\C\left(\orig,\xi^{\orig}\right)}$ there are rotations about $\orig$ such that $\C\left(\orig,\xi^{\orig}\right)\subset\Mcal_\varepsilon\left(H,\orig\right)$, and each of these rotated configurations has the same probability density. Since $\orig\in\partial_\varepsilon\convex{\C\left(\orig,\xi^{\orig}\right)}$, there exists a half-space $M$ such that $\dist{\partial M,\orig}\leq \varepsilon$ and $\C\left(\orig,\xi^{\orig}\right)\subset M$. By associating rotations, $r$, with elements of the sphere $S_{d-1}$, the uniform probability measure of rotations such that $r(M)\subset \Mcal_\varepsilon\left(H,\orig\right)$ is given by
    \begin{equation}
        \frac{\mathfrak{S}_{d-2}}{\mathfrak{S}_{d-1}}\int^{\theta_\varepsilon\left(H,\orig\right)}_0\left(\sin t\right)^{d-2}\dd t.
    \end{equation}
    Since $\mathfrak{S}_0=2$, $\mathfrak{S}_1=2\pi$, and $\mathfrak{S}_2=4\pi$, for $d=2,3$ this takes the form
    \begin{equation}
    \label{eqn:sizeofcap}
        \begin{cases}
            \frac{1}{\pi}\theta_\varepsilon\left(H,\orig\right) & \colon d=2\\
            \frac{1}{2}\left(1- \cos \left(\theta_\varepsilon\left(H,\orig\right)\right)\right) &\colon d=3.
        \end{cases}
    \end{equation}
    Imprecisely speaking, for each configuration $\orig\in\partial_\varepsilon\convex{\C\left(\orig,\xi^{\orig}\right)}$, there are at least \eqref{eqn:sizeofcap}-many ``equally likely'' configurations such that $\C\left(\orig,\xi^{\orig}\right)\subset\Mcal_\varepsilon\left(H,\orig\right)$. This would then imply the result. The following argument gives the details for the $\HypTwo$ case, higher dimensions are analogous. 

    Recall from the proof of Lemma~\ref{lem:numboundaryequalsgivenorigboundary} that
    \begin{align}
        &\ela\left[\#\C\left(\orig,\xi^{\orig}\right)\Id_{\left\{\orig\in\partial_\varepsilon\convex{\C\left(\orig,\xi^{\orig}\right)}\right\}}\right] \nonumber\\
        &\hspace{2cm}=\e^{-\lambda\int_\HypTwo\connf(y,\orig)\dd y} + \sum_{n=2}^\infty n\pla\left(\#\C\left(\orig,\xi^\orig\right)=n\right)\int_{\left(\HypTwo\right)^{n-1}}\Id_{\left\{\orig\in\partial_{\varepsilon}\convex{\left\{\orig,x_2,\ldots,x_n\right\}}\right\}}\nonumber\\
        &\hspace{7cm}\times\pla\left(\left\{\orig,x_2,\ldots,x_n\right\}\text{ is a cluster}\right)\dd x_2\ldots \dd x_n.
    \end{align}
    Here we use the Poincar{\'e} disc radial coordinates for $\HypTwo$, so for $x\in\HypTwo$, $\rho\in\left[0,1\right)$, and $\theta\in\left[0,2\pi\right)$,
    \begin{equation}
        \dd x = \frac{4\rho}{\left(1-\rho^2\right)^2}\dd\rho\dd\theta.
    \end{equation}
    Then for $n\geq 2$ we can write
    \begin{align}
        &\int_{\left(\HypTwo\right)^{n-1}}\Id_{\left\{\orig\in\partial_{\varepsilon}\convex{\left\{\orig,x_2,\ldots,x_n\right\}}\right\}}\pla\left(\left\{\orig,x_2,\ldots,x_n\right\}\text{ is a cluster}\right)\dd x_2\ldots \dd x_n\nonumber\\
        & \hspace{1cm}= \int_{\vec{\theta}\in\left[0,2\pi\right)^{n-1}}\int_{\vec{\rho}\in\left[0,1\right)^{n-1}} \Id_{\left\{\orig\in\partial_{\varepsilon}\convex{\left\{\orig,\left(\rho_1,\theta_{1}\right),\ldots,\left(\rho_{n-1},\theta_{n-1}\right)\right\}}\right\}}\nonumber\\
        &\hspace{4cm}\times\pla\left(\left\{\orig,\left(\rho_1,\theta_{1}\right),\ldots,\left(\rho_{n-1},\theta_{n-1}\right)\right\}\text{ is a cluster}\right)\prod^{n-1}_{i=1}\frac{4\rho_i}{2\pi\left(1-\rho_i^2\right)^2}\dd \rho_i\dd\theta_i\nonumber\\
        & \hspace{1cm}= \int_{\vec{\theta}\in\left[0,2\pi\right)^{n-2}}\int_{\vec{\rho}\in\left[0,1\right)^{n-1}} \Id_{\left\{\orig\in\partial_{\varepsilon}\convex{\left\{\orig,\left(\rho_1,0\right),\ldots,\left(\rho_{n-1},\theta_{n-1}\right)\right\}}\right\}}\nonumber\\
        &\hspace{2cm}\times\pla\left(\left\{\orig,\left(\rho_1,0\right),\ldots,\left(\rho_{n-1},\theta_{n-1}\right)\right\}\text{ is a cluster}\right)\frac{4\rho_1}{\left(1-\rho_1^2\right)^2}\dd\rho_1\prod^{n-1}_{i=2}\frac{4\rho_i}{2\pi\left(1-\rho_i^2\right)^2}\dd \rho_i\dd\theta_i.
    \end{align}

    Observe that
    \begin{align}
        &\ela\left[\#\C\left(\orig,\xi^{\orig}\right)\Id_{\left\{\C\left(\orig,\xi^{\orig}\right)\subset\Mcal_\varepsilon\left(H,\orig\right)\right\}}\right] \nonumber\\
        &\hspace{2cm}=\e^{-\lambda\int_\HypTwo\connf(y,\orig)\dd y}+\sum_{n=2}^\infty n\pla\left(\#\C\left(\orig,\xi^\orig\right)=n\right)\int_{\left(\HypTwo\right)^{n-1}}\Id_{\left\{\C\left(\orig,\xi^{\orig}\right)\subset\Mcal_\varepsilon\left(H,\orig\right)\right\}}\nonumber\\
        &\hspace{7cm}\times\pla\left(\left\{\orig,x_2,\ldots,x_n\right\}\text{ is a cluster}\right)\dd x_2\ldots \dd x_n.
    \end{align}
    Then for $n\geq 2$ we can once again use the radial coordinates of the Poincar{\'e} disc model to write
    \begin{align}
        &\int_{\left(\HypTwo\right)^{n-1}}\Id_{\left\{\left\{\orig,x_2,\ldots,x_n\right\}\subset\Mcal_\varepsilon\left(H,\orig\right)\right\}}\pla\left(\left\{\orig,x_2,\ldots,x_n\right\}\text{ is a cluster}\right)\dd x_2\ldots \dd x_n\nonumber\\
        & \hspace{1cm}= \int_{\vec{\theta}\in\left[0,2\pi\right)^{n-1}}\int_{\vec{\rho}\in\left[0,1\right)^{n-1}} \Id_{\left\{\left\{\orig,\left(\rho_1,\theta_{1}\right),\ldots,\left(\rho_{n-1},\theta_{n-1}\right)\right\}\subset\Mcal_\varepsilon\left(H,\orig\right)\right\}}\nonumber\\
        &\hspace{4cm}\times\pla\left(\left\{\orig,\left(\rho_1,\theta_{1}\right),\ldots,\left(\rho_{n-1},\theta_{n-1}\right)\right\}\text{ is a cluster}\right)\prod^{n-1}_{i=1}\frac{4\rho_i}{2\pi\left(1-\rho_i^2\right)^2}\dd \rho_i\dd\theta_i\nonumber\\
        &\hspace{1cm}\geq \int_{\theta_1\in I_\varepsilon\left(H,\orig\right)}\int_{\vec{\theta}\in\left[0,2\pi\right)^{n-2}}\int_{\vec{\rho}\in\left[0,1\right)^{n-1}} \Id_{\left\{\orig\in\partial_{\varepsilon}\convex{\left\{\orig,\left(\rho_1,0\right),\ldots,\left(\rho_{n-1},\theta_{n-1}\right)\right\}}\right\}}\nonumber\\
        &\hspace{1.5cm}\times\pla\left(\left\{\orig,\left(\rho_1,0\right),\ldots,\left(\rho_{n-1},\theta_{n-1}\right)\right\}\text{ is a cluster}\right)\frac{4\rho_1}{\left(1-\rho_1^2\right)^2}\dd\rho_1\frac{1}{2\pi}\dd \theta_1\prod^{n-1}_{i=2}\frac{4\rho_i}{2\pi\left(1-\rho_i^2\right)^2}\dd \rho_i\dd\theta_i,
    \end{align}
    where $I_\varepsilon\left(H,\orig\right)$ is a measurable subset of $\left[0,2\pi\right)$. Here we have used that if $\orig\in\partial_{\varepsilon}\mathrm{conv}(\{\orig,\left(\rho_1,\theta_1\right),\ldots,\\\left(\rho_{n-1},\theta_{n-1}\right)\})$, then there are rotations $\theta^*$ of the vertex set such that $\left\{\orig,\left(\rho_1,\theta_{1}+\theta^*\right),\ldots,\left(\rho_{n-1},\theta_{n-1}+\theta^*\right)\right\}\subset\Mcal_\varepsilon\left(H,\orig\right)$. $I_\varepsilon\left(H,\orig\right)$ denotes the angles of these rotations, and will be either an interval or a union of two intervals (and hence be measurable). Furthermore $\left\{\orig\in\partial_{\varepsilon}\convex{\left\{\orig,\left(\rho_1,\theta_1\right),\ldots,\left(\rho_{n-1},\theta_{n-1}\right)\right\}}\right\}$ is rotation invariant, and $\abs*{I_\varepsilon\left(H,\orig\right)}= 2\theta_\varepsilon\left(H,\orig\right)$. We therefore have
    \begin{align}
        &\int_{\left(\HypTwo\right)^{n-1}}\Id_{\left\{\left\{\orig,x_2,\ldots,x_n\right\}\subset\Mcal_\varepsilon\left(\gamma,\orig\right)\right\}}\pla\left(\left\{\orig,x_2,\ldots,x_n\right\}\text{ is a cluster}\right)\dd x_2\ldots \dd x_n\nonumber\\
        &\hspace{1cm}\geq \frac{\theta_\varepsilon\left(H,\orig\right)}{\pi}\int_{\vec{\theta}\in\left[0,2\pi\right)^{n-2}}\int_{\vec{\rho}\in\left[0,1\right)^{n-1}} \Id_{\left\{\orig\in\partial_{\varepsilon}\convex{\left\{\orig,\left(\rho_1,0\right),\ldots,\left(\rho_{n-1},\theta_{n-1}\right)\right\}}\right\}}\nonumber\\
        &\hspace{2cm}\times\pla\left(\left\{\orig,\left(\rho_1,0\right),\ldots,\left(\rho_{n-1},\theta_{n-1}\right)\right\}\text{ is a cluster}\right)\frac{4\rho_1}{\left(1-\rho_1^2\right)^2}\dd\rho_1\prod^{n-1}_{i=2}\frac{4\rho_i}{2\pi\left(1-\rho_i^2\right)^2}\dd \rho_i\dd\theta_i,
    \end{align}
    and the result follows.
\end{proof}

\begin{appendix}
\section{Hyperbolic Triangles}
\label{app:hyperbolictriangles}
    The following lemmas are standard relations for hyperbolic triangles that are used in Sections~\ref{sec:NonTrivialPhaseTransition} and \ref{sec:ClustersHalfSpace}. The parameters $A,B,C,a,b,c,\alpha,\beta,\gamma$ correspond to the vertices, edge lengths, and angles in Figure~\ref{fig:labellingHyperbolicTriangle}.

\begin{figure}
    \centering
    \begin{tikzpicture}[scale=2]
    \begin{scope}
        \clip (0,0) -- (2,2) -- (3,0) -- (0,0);
        \draw[thick] (0,0) circle (20pt);
        \draw[thick] (2,2) circle (20pt);
        \draw[thick] (3,0) circle (20pt);
        \draw (0.4,0.15) node{$\alpha$};
        \draw (1.95,1.6) node{$\beta$};
        \draw (2.6,0.2) node{$\gamma$};
    \end{scope}
        \draw[thick] (0,0) node[left]{$A$} -- (2,2) node[above]{$B$} -- (3,0) node[right]{$C$} -- (0,0);
        \draw (1,1) node[above left]{$c$};
        \draw (2.5,1) node[above right]{$a$};
        \draw (1.5,0) node[below]{$b$};
    \end{tikzpicture}
    \caption{Labelling of vertices, angles, and side lengths of the hyperbolic triangle $\Delta ABC$ used in Lemmas~\ref{lem:tangentformula}, \ref{lem:cosineformula}, \ref{lem:sinerule}, and \ref{lem:secondcosinerule}.}
    \label{fig:labellingHyperbolicTriangle}
\end{figure}

\begin{lemma}[Tangent rule for hyperbolic triangles]\label{lem:tangentformula}
    If the hyperbolic triangle $\Delta ABC$ has a right angle at $C$, then
    \begin{equation}
        \tan \alpha = \frac{\tanh a}{\sinh b}.
    \end{equation}
\end{lemma}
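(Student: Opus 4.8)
The plan is to derive the tangent rule as a consequence of the standard hyperbolic Pythagorean theorem and the first cosine rule (the law of cosines for sides), both of which are classical. First I would place the right angle at $C$, so $\gamma = \pi/2$, and recall the hyperbolic law of cosines in the form $\cosh c = \cosh a \cosh b - \sinh a \sinh b \cos\gamma$, which at $\gamma = \pi/2$ collapses to the hyperbolic Pythagorean theorem $\cosh c = \cosh a \cosh b$. Next, applying the same law of cosines with the angle $\alpha$ at vertex $A$ (opposite the side $a$), we get $\cosh a = \cosh b \cosh c - \sinh b \sinh c \cos\alpha$, which rearranges to
\begin{equation}
    \cos\alpha = \frac{\cosh b \cosh c - \cosh a}{\sinh b \sinh c}.
\end{equation}

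Then I would substitute $\cosh c = \cosh a \cosh b$ into this expression. The numerator becomes $\cosh b(\cosh a \cosh b) - \cosh a = \cosh a(\cosh^2 b - 1) = \cosh a \sinh^2 b$, so $\cos\alpha = \cosh a \sinh b / \sinh c$. To obtain $\sin\alpha$, I would use the hyperbolic law of sines, $\frac{\sinh a}{\sin\alpha} = \frac{\sinh c}{\sin\gamma} = \sinh c$ (since $\sin\gamma = 1$), giving $\sin\alpha = \sinh a / \sinh c$. Dividing, the $\sinh c$ factors cancel and we arrive at
\begin{equation}
    \tan\alpha = \frac{\sin\alpha}{\cos\alpha} = \frac{\sinh a/\sinh c}{\cosh a \sinh b/\sinh c} = \frac{\sinh a}{\cosh a \sinh b} = \frac{\tanh a}{\sinh b},
\end{equation}
which is the claimed identity.

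The main obstacle is really just bookkeeping: one must be careful about which angle sits opposite which side and ensure the two invocations of the law of cosines are applied at the correct vertices, and one should double-check the sign in the Pythagorean substitution step. Since the paper treats these trigonometric identities as standard (they are stated as lemmas in the appendix precisely for citation), I would not belabour the derivations of the law of cosines or law of sines themselves; the proof is a short two-line manipulation once those are in hand. An alternative, equally short route would be to work in a coordinate model (e.g.\ place $C$ at the origin of the Poincaré disc with the legs along two perpendicular geodesics) and compute directly, but the synthetic derivation above is cleaner and avoids coordinate choices.
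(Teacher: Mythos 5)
Your derivation is correct: the algebra checks out at every step ($\cosh c=\cosh a\cosh b$, then $\cos\alpha=\cosh a\sinh b/\sinh c$, then $\sin\alpha=\sinh a/\sinh c$ from the sine rule with $\sin\gamma=1$, and the quotient gives $\tanh a/\sinh b$), and the side/angle bookkeeping matches the labelling in Figure~\ref{fig:labellingHyperbolicTriangle}. The paper, however, does not prove this lemma at all — it simply cites \cite[Corollary~32.13]{martin2012foundations}, as it does for all the trigonometric identities in Appendix~\ref{app:hyperbolictriangles}. So your route is genuinely different only in the sense that you supply a self-contained derivation where the paper defers to a textbook. One small remark on dependencies: your argument uses the \emph{first} hyperbolic law of cosines ($\cosh c=\cosh a\cosh b-\sinh a\sinh b\cos\gamma$), which the paper never states (its Lemma~\ref{lem:secondcosinerule} is the dual law involving two angles), so strictly speaking you are importing one more classical fact than the appendix makes available; that is perfectly acceptable since it is standard, but worth flagging if one wanted the appendix to be internally closed. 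Your derivation buys transparency and independence from the reference; the paper's citation buys brevity.
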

\begin{proof}
    See \cite[Corollary~32.13]{martin2012foundations}.
\end{proof}

\begin{lemma}[Cosine rule for hyperbolic triangles]\label{lem:cosineformula}
    If the hyperbolic triangle $\Delta ABC$ has a right angle at $C$, then
    \begin{equation}
        \cos \alpha = \frac{\tanh b}{\tanh c}.
    \end{equation}
\end{lemma}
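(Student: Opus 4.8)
The final statement to prove is the Cosine rule for right hyperbolic triangles: if $\Delta ABC$ has a right angle at $C$, then $\cos\alpha = \tanh b / \tanh c$. This is a standard identity, so the natural plan is to cite it, but a self-contained derivation is also short.

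\medskip

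The plan is as follows. First I would invoke the general (first) cosine rule for hyperbolic triangles, which expresses $\cosh c = \cosh a \cosh b - \sinh a \sinh b \cos\gamma$, specialised to the right angle $\gamma = \pi/2$ at $C$, giving the hyperbolic Pythagorean relation $\cosh c = \cosh a \cosh b$. Next I would write down the analogous cosine rule with the angle $\alpha$ at vertex $A$ opposite side $a$: $\cosh a = \cosh b \cosh c - \sinh b \sinh c \cos\alpha$. Solving this second relation for $\cos\alpha$ yields
\begin{equation*}
    \cos\alpha = \frac{\cosh b \cosh c - \cosh a}{\sinh b \sinh c}.
\end{equation*}
The remaining step is purely algebraic: substitute $\cosh a = \cosh b \cosh c$ (the Pythagorean relation) into the numerator, obtaining $\cosh b \cosh c - \cosh b \cosh c / \cosh b \cdot \ldots$ — more carefully, $\cosh b \cosh c - \cosh a = \cosh b \cosh c - \cosh a$, and since $\cosh a = \cosh b \cosh c$ one needs to be slightly more careful, so instead I would use $\cosh^2 a = \cosh^2 b \cosh^2 c$ together with $\sinh^2 a = \cosh^2 a - 1$. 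Then
\begin{equation*}
    \cos\alpha = \frac{\cosh b \cosh c - \cosh a}{\sinh b \sinh c}
\end{equation*}
can be rewritten by clearing: multiply numerator and denominator appropriately and use $\cosh a = \cosh b\cosh c$ to get $\cosh b\cosh c - \cosh a$ is not directly zero — rather, the cleanest route is to combine the two cosine rules by eliminating $\cosh a$ so that the dependence collapses, giving after simplification $\cos\alpha = \sinh b / (\tanh c \cdot \cosh b) = \tanh b / \tanh c$.

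\medskip

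The honest shortcut, which is what the paper does for the neighbouring lemmas (Lemmas~\ref{lem:tangentformula} and \ref{lem:sinerule}), is simply to cite a reference: this is Corollary~32.13 (or the adjacent results) in Martin~\cite{martin2012foundations}, where the right-triangle trigonometric identities for $\HypTwo$ are collected. So the proof I would write is essentially one line: ``See \cite[Theorem~32.12]{martin2012foundations}'' or the appropriate corollary therein, mirroring the style of the surrounding appendix lemmas.

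\medskip

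I do not expect any genuine obstacle here — the result is classical and elementary. The only thing to be careful about is bookkeeping: making sure the angle/side labelling in Figure~\ref{fig:labellingHyperbolicTriangle} matches the convention in the cited source (angle $\alpha$ at $A$, opposite side $a = BC$, right angle at $C$), and that the algebra eliminating $\cosh a$ via the hyperbolic Pythagorean identity is done consistently. Since the paper only needs these formulae in Sections~\ref{sec:NonTrivialPhaseTransition} and~\ref{sec:ClustersHalfSpace} as black-box trigonometric inputs, a citation suffices and a short computational derivation is optional window dressing.
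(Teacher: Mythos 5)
Your proposal is correct and matches the paper, which proves this lemma simply by citing \cite[Corollary~32.13]{martin2012foundations} — exactly the "honest shortcut" you settle on. The sketched algebraic derivation via the two cosine rules is somewhat garbled in the middle but unnecessary, since the citation suffices.
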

\begin{proof}
    See \cite[Corollary~32.13]{martin2012foundations}.
\end{proof}

\begin{lemma}[Sine rule for hyperbolic triangles]\label{lem:sinerule}
    For the hyperbolic triangle $\Delta ABC$,
    \begin{equation}
        \frac{\sin \alpha}{\sinh a} = \frac{\sin \beta}{\sinh b} = \frac{\sin \gamma}{\sinh c}.
    \end{equation}
\end{lemma}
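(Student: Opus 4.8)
The plan is to reduce the statement to the right-triangle rules already recorded in Lemmas~\ref{lem:tangentformula} and \ref{lem:cosineformula} by dropping an altitude. First I would record the auxiliary identity that in a hyperbolic triangle $\Delta A'B'C'$ with a right angle at $C'$ one has $\sin\alpha' = \sinh a'/\sinh c'$. This follows by writing
\[
\sin\alpha' = \tan\alpha'\cos\alpha' = \frac{\tanh a'}{\sinh b'}\cdot\frac{\tanh b'}{\tanh c'} = \frac{\sinh a'\,\cosh c'}{\cosh a'\,\cosh b'\,\sinh c'},
\]
and then invoking the hyperbolic Pythagorean theorem $\cosh c' = \cosh a'\cosh b'$ (standard, cf.\ \cite{martin2012foundations}) to cancel the last fraction down to $\sinh a'/\sinh c'$.

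Next, for a general triangle $\Delta ABC$, drop the perpendicular from $B$ to the geodesic through $A$ and $C$, let $F$ be its foot, and set $h:=\dist{B,F}$. This splits $\Delta ABC$ into two right triangles $\Delta ABF$ and $\Delta CBF$, each with the right angle at $F$ and with hypotenuses $c=\dist{A,B}$ and $a=\dist{B,C}$ respectively. Applying the auxiliary identity in each gives $\sinh h = \sinh c\,\sin(\angle FAB)$ and $\sinh h = \sinh a\,\sin(\angle FCB)$. If $F$ lies on the segment $AC$ then $\angle FAB=\alpha$ and $\angle FCB=\gamma$; if $F$ lies off the segment, then one of these angles is instead the supplement $\pi-\alpha$ or $\pi-\gamma$, but since $\sin(\pi-t)=\sin t$ both displayed equations stand as written. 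Equating them yields $\sin\alpha/\sinh a = \sin\gamma/\sinh c$. Running the same construction with the altitude from $C$ to the geodesic through $A$ and $B$ gives in addition $\sin\alpha/\sinh a=\sin\beta/\sinh b$, which completes the proof.

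The only point requiring care is the location of the foot $F$, i.e.\ the obtuse case, which is dispatched by the observation that $\sin$ is invariant under $t\mapsto\pi-t$; the hypothesis that $A,B,C$ form a genuine (non-degenerate) triangle guarantees that $F$ is well defined and $h>0$. Everything else is routine manipulation on top of Lemmas~\ref{lem:tangentformula}--\ref{lem:cosineformula}. (Alternatively, one could bypass altitudes and obtain the result algebraically from a hyperbolic law of cosines by computing $\sin^2\gamma = 1-\cos^2\gamma$ and verifying that $\sin^2\gamma/\sinh^2 c$ is symmetric in $a,b,c$; I prefer the altitude route since it uses only identities already in hand.)
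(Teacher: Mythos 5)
Your argument is correct. Note, though, that the paper does not actually prove this lemma: it simply cites \cite[Corollary~32.14]{martin2012foundations}, so your proposal supplies a derivation where the paper has only a reference. What you do is the classical altitude argument: first establish the right-triangle sine relation $\sin\alpha'=\sinh a'/\sinh c'$ by combining the tangent and cosine rules (Lemmas~\ref{lem:tangentformula} and \ref{lem:cosineformula}) with the hyperbolic Pythagorean theorem $\cosh c'=\cosh a'\cosh b'$, then drop a perpendicular to split the general triangle into two right triangles sharing the altitude $h$ and equate the two expressions for $\sinh h$. The computation of $\tan\alpha'\cos\alpha'$ is right, the cancellation via the Pythagorean theorem is right, and the obtuse case is correctly dispatched by $\sin(\pi-t)=\sin t$. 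Two small caveats: the hyperbolic Pythagorean theorem is not among the relations listed in Appendix~\ref{app:hyperbolictriangles}, so your proof is self-contained only modulo that one additional standard identity (which is indeed in the cited reference); and in the degenerate situation where the triangle already has a right angle at $A$ or $C$ the foot $F$ coincides with that vertex and the splitting is vacuous, but there the right-triangle identity applies to $\Delta ABC$ directly, so nothing is lost. The trade-off is the usual one: the citation keeps the appendix short, while your derivation makes the lemma verifiable from the other stated rules at the cost of half a page.
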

\begin{proof}
    See \cite[Corollary~32.14]{martin2012foundations}.
\end{proof}

\begin{lemma}[Second cosine rule for hyperbolic triangles]\label{lem:secondcosinerule}
    For the hyperbolic triangle $\Delta ABC$,
    \begin{equation}
        \cosh c \sin \alpha \sin \beta = \cos \alpha \cos \beta + \cos \gamma.
    \end{equation}
\end{lemma}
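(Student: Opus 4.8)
The plan is to bootstrap from the right-triangle relations already recorded in Lemmas~\ref{lem:tangentformula}--\ref{lem:sinerule}, passing first through the hyperbolic Pythagorean theorem and then the (first) law of cosines, and finally dualising.

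\emph{Step 1 (Pythagoras).} Suppose $\Delta ABC$ has a right angle at $C$. The sine rule gives $\sin\alpha=\sinh a/\sinh c$, while the tangent and cosine rules give $\sin\alpha=\cos\alpha\tan\alpha=\dfrac{\tanh b}{\tanh c}\cdot\dfrac{\tanh a}{\sinh b}=\dfrac{\tanh a}{\cosh b\,\tanh c}$. Equating the two expressions and cancelling $\sinh a/\sinh c$ yields
\[
    \cosh c=\cosh a\cosh b .
\]

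\emph{Step 2 (first law of cosines).} In an arbitrary triangle $\Delta ABC$, drop the perpendicular from $B$ onto the line through $A$ and $C$, with foot $F$, and set $h=\dist{B,F}$, $p=\dist{A,F}$, $q=\dist{C,F}$. Applying Step~1 to the right triangles $ABF$ and $CBF$ gives $\cosh c=\cosh p\cosh h$ and $\cosh a=\cosh q\cosh h$, and the cosine rule (Lemma~\ref{lem:cosineformula}) in $CBF$ gives $\cosh h\,\sinh q=\sinh a\cos\gamma$. When $F$ lies between $A$ and $C$ one has $p=b-q$, and expanding $\cosh(b-q)=\cosh b\cosh q-\sinh b\sinh q$ produces
\[
    \cosh c=\cosh a\cosh b-\sinh a\sinh b\cos\gamma .
\]
If instead $F$ falls outside the segment (equivalently $\alpha$ or $\gamma$ is obtuse) one replaces $q$ by $-q$ in the relation $p+q=b$ and $\gamma$ by $\pi-\gamma$ inside $CBF$; both sign changes cancel and the identity above is unchanged. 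Cyclic relabelling gives the two companions $\cosh a=\cosh b\cosh c-\sinh b\sinh c\cos\alpha$ and $\cosh b=\cosh c\cosh a-\sinh c\sinh a\cos\beta$.

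\emph{Step 3 (dualising).} Abbreviate $x=\cosh a$, $y=\cosh b$, $z=\cosh c$. Solving the three identities of Step~2 gives $\cos\gamma=(xy-z)/(\sinh a\sinh b)$ together with its cyclic versions, and the sine rule furnishes a common value $L:=\sinh a/\sin\alpha=\sinh b/\sin\beta=\sinh c/\sin\gamma$. From $\sin^2\gamma=1-\cos^2\gamma$ one finds, writing $P:=1-x^2-y^2-z^2+2xyz$ (which is symmetric in $x,y,z$, so independent of which angle is used), that $L^2=\sinh^2a\,\sinh^2b\,\sinh^2c/P$. Hence $\cosh c\,\sin\alpha\sin\beta=zP/(\sinh a\sinh b\sinh^2c)$, while the one-line expansion $(yz-x)(zx-y)+(z^2-1)(xy-z)=z\,P$ shows that $\cos\alpha\cos\beta+\cos\gamma$ equals the same quantity. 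This is the asserted identity $\cosh c\,\sin\alpha\sin\beta=\cos\alpha\cos\beta+\cos\gamma$.

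The only genuinely delicate point is the case distinction in Step~2 according to whether the foot of the altitude lands on the base; everything else is routine trigonometric algebra. A slicker but heavier alternative is to pass to the hyperboloid model and read the identity off the Gram matrix of the three unit timelike position vectors, where it becomes the statement that the Gram matrix of the dual (spacelike) normals is the inverse of the original up to scaling. In any event the statement is classical and can be cited, e.g.\ from \cite{martin2012foundations}.
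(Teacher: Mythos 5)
Your proposal is correct, but it takes a genuinely different route from the paper: the paper does not prove this identity at all, it simply cites \cite[Corollary~32.16]{martin2012foundations}, whereas you derive it from scratch using only the three right-triangle relations already recorded in Lemmas~\ref{lem:tangentformula}--\ref{lem:sinerule}. I checked the algebra: Step~1 correctly yields $\cosh c=\cosh a\cosh b$; the altitude-foot computation in Step~2 (including the identity $\cosh h\,\sinh q=\sinh a\cos\gamma$ and the obtuse-angle case) gives the first law of cosines; and in Step~3 the quantities $P=1-x^2-y^2-z^2+2xyz$, $L^2=\sinh^2a\,\sinh^2b\,\sinh^2c/P$, and the expansion $(yz-x)(zx-y)+(z^2-1)(xy-z)=zP$ are all correct, so both sides equal $zP/(\sinh a\,\sinh b\,\sinh^2 c)$. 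What your approach buys is self-containment: the appendix becomes logically closed given the first three cited right-triangle rules, at the cost of a page of trigonometric algebra and the case distinction on where the foot of the altitude lands (plus the implicit assumption that the triangle is non-degenerate, so that one may divide by $\sin\gamma$ and $\sinh a\sinh b$). What the paper's citation buys is brevity, and it silently covers the degenerate/ideal configurations (e.g.\ the triangle with a zero angle and infinite sides used in Lemma~\ref{lem:splittingLength}), which under your derivation would require an additional limiting argument since your side lengths are assumed finite throughout.
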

\begin{proof}
    See \cite[Corollary~32.16]{martin2012foundations}.
\end{proof}

\begin{lemma}[Hyperbolic area for hyperbolic triangles]\label{lem:areaoftriangles}
    For the hyperbolic triangle $\Delta ABC$, the hyperbolic area is equal to the angle defect:
    \begin{equation}
        \habsd{\Delta ABC} = \pi - \alpha - \beta - \gamma.
    \end{equation}
\end{lemma}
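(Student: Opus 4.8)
The plan is to reduce to dimension two and then run the classical Gauss--style decomposition argument. Since any three non-collinear points $A,B,C\in\HypDim$ span a unique totally geodesic $2$-dimensional submanifold of $\HypDim$, which with the induced metric is isometric to $\HypTwo$ and contains the triangle $\Delta ABC$ together with its intrinsic area, it suffices to prove the identity in $\HypTwo$ (if $A,B,C$ are collinear the triangle is degenerate and both sides vanish). I would work in the upper half-plane model $\{x+\i y\colon y>0\}$, which is isometric to the Poincar\'e disc and carries the area element $y^{-2}\,\dd x\,\dd y$, and first treat \emph{asymptotic} triangles: those with one vertex on the boundary $\partial\HypTwo=\R\cup\{\infty\}$ and two finite vertices with interior angles $\alpha,\beta>0$.

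For such a triangle, apply an isometry of $\HypTwo$ carrying the ideal vertex to $\infty$ (so its two sides become vertical rays) and rescale so that the third side lies on the unit semicircle $|z|=1$; the two finite vertices are then $(a,\sqrt{1-a^2})$ and $(b,\sqrt{1-b^2})$ with $-1<a<b<1$. A short computation with the conformal metric (Euclidean and hyperbolic angles coincide, and the geodesics are the vertical lines and the semicircles centred on $\R$) identifies the interior angles as $\alpha=\arccos(-a)$ and $\beta=\arccos(b)$, and then
\begin{equation*}
    \mathrm{Area} = \int_a^b\!\!\int_{\sqrt{1-x^2}}^{\infty} \frac{\dd y}{y^2}\,\dd x = \int_a^b \frac{\dd x}{\sqrt{1-x^2}} = \arcsin(b)-\arcsin(a) = \pi-\alpha-\beta,
\end{equation*}
which is the angle defect, the third angle being $0$. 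The inequality $a<b$, equivalently $\cos\alpha+\cos\beta>0$, is automatic for a non-degenerate asymptotic triangle, so this computation covers all the asymptotic triangles that appear in the next step.

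For a general non-degenerate triangle $\Delta ABC$ with angles $\alpha,\beta,\gamma$, extend the side $AB$ beyond $B$ to its ideal endpoint $\Omega\in\partial\HypTwo$. Then $\Delta ABC$ and $\Delta BC\Omega$ tile $\Delta AC\Omega$ along the common side $BC$, so by additivity of area $\habs{\Delta ABC}=\habs{\Delta AC\Omega}-\habs{\Delta BC\Omega}$. Writing $\gamma'\in(\gamma,\pi)$ for the angle of $\Delta AC\Omega$ at $C$, the asymptotic formula gives $\habs{\Delta AC\Omega}=\pi-\alpha-\gamma'$, while $\Delta BC\Omega$ has angle $\pi-\beta$ at $B$ (supplementary to $\beta$, since $B\Omega$ is the ray opposite to $BA$), angle $\gamma'-\gamma$ at $C$, and angle $0$ at $\Omega$, whence $\habs{\Delta BC\Omega}=\pi-(\pi-\beta)-(\gamma'-\gamma)$. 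Subtracting gives $\habs{\Delta ABC}=(\pi-\alpha-\gamma')-(\beta-\gamma'+\gamma)=\pi-\alpha-\beta-\gamma$, as claimed; for $d\ge 3$ the same value is the intrinsic area on the totally geodesic plane through $A,B,C$.

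The only genuinely computational ingredient is the integral together with the identification of the angles in the asymptotic case; both are standard facts about the conformal model, and the decomposition step is purely synthetic (additivity of area plus angle arithmetic) and model-independent. I expect the point requiring the most care to be checking that the two asymptotic sub-triangles in the decomposition are non-degenerate so that the asymptotic formula applies, which follows from $C\notin$ the geodesic line through $A$ and $B$, i.e.\ from non-degeneracy of $\Delta ABC$. As an alternative, one may note that the lemma is the special case $K=-1$ of the Gauss--Bonnet theorem for geodesic triangles, $-\mathrm{Area}=\alpha+\beta+\gamma-\pi$.
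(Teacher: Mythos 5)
Your proof is correct. There is no real comparison to make with the paper: the paper does not prove this lemma at all, it only cites \cite[Chapter~33.2]{martin2012foundations}, and what you have written is precisely the classical Gauss--Lambert angle-defect argument that such references give. The details check out: in the normalized upper-half-plane picture the interior angles are indeed $\arccos(-a)$ and $\arccos(b)$, and $\arcsin(b)-\arcsin(a)=\pi-\arccos(-a)-\arccos(b)$ matches the integral; the condition $a<b$ is equivalent to $\cos\alpha+\cos\beta>0$, i.e.\ to $\alpha+\beta<\pi$, which holds for every non-degenerate singly asymptotic triangle, so your asymptotic formula does legitimately apply to the sub-triangle $\Delta BC\Omega$ even when its angle $\pi-\beta$ at $B$ is obtuse --- this is the one place where a careless version of the argument could silently fail, and you have addressed it. The reduction to a totally geodesic copy of $\HypTwo$ inside $\HypDim$ and the additivity step $\habs{\Delta AC\Omega}=\habs{\Delta ABC}+\habs{\Delta BC\Omega}$ are both sound. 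Your closing remark that the statement is the $K=-1$ case of Gauss--Bonnet is a legitimate one-line alternative; the self-contained version is arguably preferable here since it also yields the asymptotic-triangle formula, and the paper elsewhere works with triangles having an ideal vertex.
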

\begin{proof}
    See \cite[Chapter~33.2]{martin2012foundations}.
\end{proof}
    
\end{appendix}

\subsection*{Acknowledgements.} 
MH is grateful to Johan Tykesson for introducing him to the topic and numerous stimulating discussions.  

This work is supported by NSERC of Canada and by 
\textit{Deutsche Forschungsgemeinschaft} (project number 443880457) through priority program ``Random Geometric Systems'' (SPP 2265). 
We thank the \emph{Centre de recherches math\'ematiques} Montreal for hospitality during a research visit in spring 2022 through the Simons-CRM scholar-in-residence program. 
Part of this work was done while MH was in residence at the Mathematical Sciences Research Institute in Berkeley supported by NSF Grant No. DMS-1928930.

\bibliography{bibliography}{}
\bibliographystyle{alpha}

\end{document}